\documentclass[12pt]{article}
\usepackage{graphicx}
\usepackage{amsfonts }
\usepackage{amsmath}
\usepackage{fullpage}
\usepackage[applemac]{inputenc}
\usepackage{enumerate}
\usepackage{amssymb}
\usepackage{amsthm}
\usepackage{setspace}
\usepackage{cite}
\usepackage{fancyhdr}
\pagestyle{fancy}
\usepackage{anysize}
\marginsize{3cm}{1.5cm}{3cm}{7cm}
\headsep 0.5cm
\parskip 0.3cm
\theoremstyle{definition}
\newtheorem{teo}{Theorem}[section]
\newtheorem{cor}[teo]{Corollary}
\newtheorem{lem}[teo]{Lemma}
\newtheorem{conj}[teo]{Conjecture}
\newtheorem{rem}[teo]{Remark}

\newtheorem{defi}[teo]{Definition}

\newtheorem{que}[teo]{Question}
\setlength\parindent{0pt}

\begin{document}
\newcommand{\A}{{\cal A}}
\renewcommand{\O}{{\cal O}}
\newcommand{\B}{{\cal B}}
\newcommand{\M}{{\cal M}}
\newcommand{\C}{{\cal C}}
\newcommand{\F}{{\mathbb F}}
\newcommand{\Prob}{{\mathbb P}}
\newcommand{\I}{{\cal I}}
\newcommand{\R}{{\mathbb R}}
\newcommand{\Z}{{\mathbb Z}}
\newcommand{\X}{{\cal X}}
\newcommand{\Y}{{\cal Y}}
\newcommand{\ZZ}{{\cal Z}}

\newcommand{\conv}{{\mathrm conv}}
\newcommand{\ehr}{\text{Ehr}}
\renewcommand{\span}{\text {span}}

\renewcommand{\a}{{\mathbf a}}
\renewcommand{\b}{{\mathbf b}}
\newcommand{\e}{{\mathbf e}}
\renewcommand{\v}{{\mathbf v}}
\newcommand{\x}{{\mathbf x}}

\title{ \bf The Maxflow problem and a generalization to simplicial complexes}
\author{Fabi\'an Latorre}
\date{2012}

\maketitle
\newpage 
\noindent  {\bf \large Acknowledgements}

\vspace{2cm}

Gracias a la cigue\~na y quienes, no siendo su decisi\'on, han tenido que vivir en mi tiempo y beber junto a m\'{i} y junto a Baco. Gracias por ser mis contempor\'aneos, y que la casualidad nos haya llevado a conocernos y tal vez, ver un poco m\'as que un aut\'omata el uno en el otro. 

Gracias a mi asesor Mauricio Velasco a quien ha dedicado gran parte de su tiempo a gu\'{i}ar este proyecto, a mis padres y hermanos.
\newpage

\vspace{2cm}

 \tableofcontents

\newpage

\vspace{2cm}

 \section{Introduction}
 
 The problem of Maxflow was formulated by T.E. Harris in 1954 while studying the Soviet Union's railway network, under a military research program financed by RAND, Research and Development corporation. The research remain classified until 1999. The Maxflow problem is defined on a \emph{network} which is a directed graph together with a real positive capacity function defined on the set of edges of the graph and two vertices $s,t$ called the \emph{source} and the \emph{sink}. A flow is another function of this type that respects capacity constraints and a Kirchoff's law type restriction on each vertex except the source and sink. The net flow of a flow is defined as the amount of flow leaving the source. The problem of maxflow is to find a flow with maximum net flow on a given network. In the first section we will define clearly such concepts and present basic results in the subject.

Throughout the second section, we will present three different algorithms for the solution of Maxflow. In 1956 L. Ford and D. Fulkerson devised the first known algorithm that solves the problem in polynomial time. The algorithm works starting with the zero flow and finding paths from source to sink where flow can be augmented preserving the flow and capacity restrictions. We then analyze a more efficient algorithm developed by A. Goldberg and E. Tarjan in 1988. This algorithm works in a different fashion starting with a \emph{preflow}, a function saturating edges adjacent to the source, and then pushing excess of flow to vertices estimated to be closer to the sink. At the end of the algorithm the preflow becomes a flow and in fact, a maximum flow.  Finally we describe Dorit Hochbaum's \emph{pseudoflow} algorithm, which is the most efficient algorithm known to day that solves the Maxflow problem.

In the third section we show the usefulness of this subject and present three applications of the theory of Network flow. First we show how well-known theorems in combinatorics such as the Hall's Marriage theorem can be proven using Maxflow results. We then show how to find a set of maximal chains in a poset with certain properties using the results in the previous sections. Finally we describe an algorithm for \emph{image segmentation}. an important subject in computer vision, that relies on the relation between a maximum flow and a \emph{minimum cut}. 

The problem of Maxflow is a widely developed subject in modern mathematics.  Efficient algorithms exist to solve this problem, that is why a good generalization may permit these algorithms to be understood as a particular instance of solutions in a wider class of problems. In the last section we suggest a generalization in the context of simplicial complexes, that reduces to the problem of Maxflow in graphs, when we consider a graph as a simplicial complex of dimension 1. 

\newpage

\section{Preliminaries}
\vspace{1cm}
\subsection{Flow in a network}\label{flowian}

There are many equivalent ways to define the objects needed to state our problem. We will work with the following:

\begin{defi}
A \emph{network} is a pair $(G,c)$ such that
\begin{enumerate}[i)]
\item $G=(V,E)$ is a finite simple directed graph
\item $V=V' \cup \{s,t\}$
\item for any $v \in V$, $(v,s) \not \in E$ and $(t,v) \not \in E$.
\item $c: E \longrightarrow \R_+ $
\end{enumerate}
\end{defi}

We call $s$ and $t$ the \emph{source} and the \emph{sink} respectively. Condition iii) means that there are no edges \emph{into} the source, and no edges \emph{out} of the sink.
\begin{defi}
For any simple directed graph we define the \emph{incidence function}\\ $\phi: V \times E \longrightarrow \{-1,0,1\} $ as follows:

\begin{displaymath}
   \phi (v,e) = \left\{
     \begin{array}{rl}
       1 & : e = (v,x) \\
       -1 & : e = (x,v) \\
       0 &:  \mbox{else}
     \end{array}
   \right.
\end{displaymath} 

\end{defi}

\begin{defi}\label{flow}
A flow on a network $(G,c)$ is a function $f:E \longrightarrow \R_+ $ such that:
\begin{enumerate}[i)]

\item for any edge $e$, $f(e) \leq c(e) $

\item for any vertex $v \not = s,t $ \emph{conservation of flow} holds:  $$\sum_{e \in E} f(e)\phi(v,e) = 0$$  \end{enumerate}

\end{defi}

\begin{defi}
For a flow $f$ on a network $(G,c)$, we define the \emph{net flow}:
$$|f|:=\sum_{e \in E} f(e)\phi(s,e) $$
$|f|$ is the total amount flowing out of the source.
\end{defi}

\vspace{1cm}
\subsection{The problem of MAXFLOW} \label{lp}

Given a network $(G,c)$ the MAXFLOW problem is to find a flow $f$ of maximum net flow.

\begin{teo}\label{existence} For any network $(G,c)$ there exists a flow $f$ of maximum net flow.
\begin{proof}
Let $m:[|E|] \rightarrow E $ be an enumeration of the edges of $G$. Let $ {\cal F} $ be the set of feasible flows on $(G,c)$. The map $ \psi : {\cal F } \rightarrow A \subset \R^{|E|} $, $f \mapsto [f(m(i))]_i $ is a bijection between ${\cal F}$ and a subset of $\R^{|E|}$. Let $ {\cal F}^*$ be the image of $\cal F$ under this map. From definition \ref{flow}, the edge capacity constraints imply that $\cal F^*$ is bound, and the flow conservation constraints imply that it is closed, hence $\cal F^*$ is compact. The map $ f^* \in {\cal F^*} \mapsto |\psi^{-1}( f^*)| \in \R $ is a linear map, hence continuous. As it is defined on a compact set, it achieves a maximum value, say at $f_{\mbox{max}}$. $\psi^{-1} (f_{\mbox{max}}) $ is then a flow of maximum value.\\
\end{proof} 

\end{teo}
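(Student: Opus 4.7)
My plan is to recast the statement as an optimization problem on a Euclidean subset and invoke the extreme value theorem. First I would fix an enumeration of the edges $e_1,\dots,e_{|E|}$ so that every function $f:E\to\R_+$ is identified with the vector $(f(e_1),\dots,f(e_{|E|}))\in\R^{|E|}$; write $\mathcal{F}^*\subset\R^{|E|}$ for the image of the set of feasible flows. Under this identification the capacity constraints $0\le f(e)\le c(e)$ and the conservation constraints from Definition \ref{flow} become, respectively, finitely many linear inequalities and linear equalities in the coordinates, so $\mathcal{F}^*$ is a (possibly unbounded-looking) intersection of closed half-spaces and hyperplanes — hence closed. Boundedness is immediate from $0\le f(e_i)\le c(e_i)$, which confines $\mathcal{F}^*$ to the box $\prod_i [0,c(e_i)]$. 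Thus $\mathcal{F}^*$ is compact.

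Next I would observe that $\mathcal{F}^*$ is nonempty: the identically zero function satisfies both defining conditions of a flow, so $\mathbf{0}\in\mathcal{F}^*$.

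Then I would note that the net flow functional
\[
f\;\longmapsto\;|f|=\sum_{e\in E} f(e)\phi(s,e)
\]
is, under the identification, a linear form on $\R^{|E|}$ (its coefficients are the entries $\phi(s,e_i)\in\{-1,0,1\}$), and linear forms on $\R^{|E|}$ are continuous. Restricting this continuous functional to the nonempty compact set $\mathcal{F}^*$, the extreme value theorem supplies a maximizer $f^*_{\max}\in\mathcal{F}^*$, whose preimage in $\mathcal{F}$ is the desired flow of maximum net flow.

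No step here is really an obstacle; the whole argument reduces to checking that the constraints defining a flow are linear and that the capacity bounds yield a bounded feasible region. The only place that merits a moment of care is verifying closedness — i.e., writing out that each conservation equation and each capacity inequality is a preimage of a closed set under a continuous (in fact linear) map, so $\mathcal{F}^*$ is a finite intersection of closed sets. Once that is in place, the conclusion is immediate.
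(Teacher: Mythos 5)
Your argument is essentially identical to the paper's proof: identify feasible flows with points of a subset of $\R^{|E|}$, observe that the capacity and conservation constraints make this set compact, and apply the extreme value theorem to the linear (hence continuous) net-flow functional. You even patch a small omission by checking that the feasible set is nonempty (the zero flow), which the paper's version takes for granted.
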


The previous theorem shows that, in fact, MAXFLOW is a linear programming problem, the most important results of which can be proved with LP theory. We discuss this formulation in detail in what follows.

\begin{defi}\label{incidencematrix}
Let $G=(V,E)$ be a simple directed graph, $\phi$ its incidence function and $(G,c)$ a network. Let $n:=|V| $, $m:=|E|$, $v:[n] \rightarrow V$ be an enumeration of the vertices and $e:[m] \rightarrow E $  be an enumeration of the edges. We define the \emph{incidence matrix} $\Phi_{v,e}$with respect to the enumerations $v,e$ as 
$$
\Phi_{v,e}(i,j)= \phi(v(i), e(j))
$$
\end{defi}

\begin{figure}[h!]
\begin{center}
\includegraphics[scale=0.4]{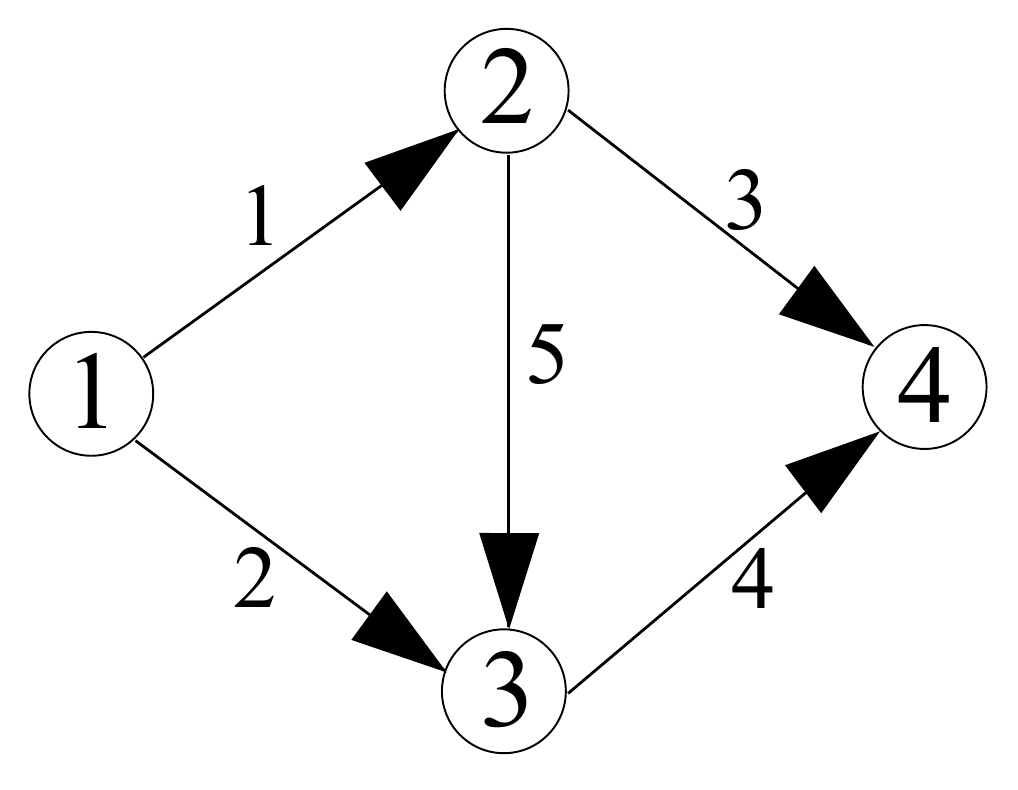} 
\caption{a graph $G_1$}
\end{center}
\end{figure} 

\begin{table}[h!]
  \begin{center}
 $$ \left [  \begin{tabular}{ c c c c c }
   
    1 & 1 & 0 &0 &0 \\
    -1 & 0 & 1 & 0 &1 \\
    0 & -1 & 0 & 1 & -1 \\
    0 & 0 & -1 & -1 & 0
  
    \end{tabular} \right ] $$
  \end{center}
  \caption{Incidence matrix of $G_1$}
\end{table}

From now on, we suppose a network $(G,c)$ has fixed enumerations $v,e$, of vertices and edges. We take $n$ as the number of vertices, $m$ as the number of edges and suppose that $v(1)=s$, $v(n)=t$, then we refer simply to the incidence matrix as $\Phi$.

\begin{lem} Given a network $(G,c)$, the problem of MAXFLOW is equivalent to the following LP problem:
\begin{displaymath}
\begin{array}{ccccccc}
{\displaystyle \mathop{\max}_{x \in \R^m} \Phi_{1,j} \cdot  x }& ;& \Phi^* x = 0 &;& I_m x \leq c^* & ;& x \geq 0
\end{array}
\end{displaymath}
where $\Phi_{1,j}$ is the first row vector of the matrix $\Phi$, $\Phi^*$ is the matrix that results from $\Phi$ by deleting the first and last rows, $I_m$ is the identity matrix of size $m$ and $c^*:=[c(e(i))]_i $ is the vector of edge capacities.
\begin{proof}
Follows from the definition \ref{flow} and the proof of theorem \ref{existence}.\\
\end{proof}

\end{lem}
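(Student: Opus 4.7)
The plan is to use the bijection $\psi:\mathcal{F}\to\mathcal{F}^*\subset\R^m$ from the proof of Theorem~\ref{existence}, which sends a flow $f$ to the coordinate vector $x=[f(e(i))]_i$, and then verify that the conditions defining a flow correspond entry-by-entry to the LP constraints, while the net flow functional becomes the objective $\Phi_{1,j}\cdot x$.

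First I would dispose of the easy correspondences. The requirement that $f$ maps into $\R_+$ translates to $x\geq 0$ componentwise, and the capacity bound $f(e(i))\leq c(e(i))$ for each $i$ is exactly the system $I_m x\leq c^*$. For the conservation law of Definition~\ref{flow}(ii), fix an interior vertex $v=v(i)$ with $i\neq 1,n$; then
$$\sum_{e\in E}f(e)\phi(v,e)=\sum_{j=1}^m \phi(v(i),e(j))\,x_j=\sum_{j=1}^m \Phi(i,j)\,x_j=(\Phi x)_i.$$
So conservation at every vertex distinct from $s$ and $t$ is equivalent to $(\Phi x)_i=0$ for all $i\in\{2,\dots,n-1\}$, which is precisely $\Phi^*x=0$ once the two rows indexed by $s=v(1)$ and $t=v(n)$ are removed.

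It then remains to identify the objective. Since $v(1)=s$, the net flow is
$$|f|=\sum_{e\in E}f(e)\phi(s,e)=\sum_{j=1}^m \Phi(1,j)\,x_j=\Phi_{1,j}\cdot x,$$
so maximizing $|f|$ over $\mathcal{F}$ is the same optimization as maximizing $\Phi_{1,j}\cdot x$ over the feasible polyhedron of the LP. I do not anticipate a genuine obstacle: the argument is a dictionary between the combinatorial description of a flow and the matrix form of the incidence function. The only point requiring attention is the row bookkeeping, namely that $\Phi^*$ omits exactly the two rows indexed by $s$ and $t$, which is guaranteed by the standing convention $v(1)=s$ and $v(n)=t$.
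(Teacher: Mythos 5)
Your argument is correct and is exactly the translation the paper intends: its proof is the one-line remark that the lemma ``follows from Definition~\ref{flow} and the proof of Theorem~\ref{existence},'' and you have simply written out that same dictionary (the coordinate map $\psi$, the row-by-row identification of the conservation constraints with $\Phi^* x = 0$, and the objective as the first row of $\Phi$). No difference in approach, only in the level of detail.
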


\begin{defi}\label{dual}
For a linear program (called the primal problem)
\begin{displaymath}
\begin{array}{ccccc}
{\displaystyle \mathop{\max}_{x} (c^T x) }& ;& A x \leq b & ; & x \geq 0
\end{array}
\end{displaymath}

the dual program is defined as 

\begin{displaymath}
\begin{array}{ccccc}
{\displaystyle \mathop{\min}_{y} (b^T  y) }& ;& A^T y \geq c & ; & y \geq 0
\end{array}
\end{displaymath}

\end{defi}

We will compute the dual program of MAXFLOW.

\begin{defi}\label{cut} Let $(G,c)$ be a network. A \emph{cut} is a partition of $V$ into two disjoint subsets $(S,\bar S) $ such that $s \in S$ and $t \in \bar S $. Let $E':= \{ e \in E : e=(s,\bar s) \, s \in S \,,\, \bar s \in \bar S \} $. We define the \emph{capacity} of the cut $(S,\bar S)$ 
$$
C(S, \bar S) = \sum_{e \in E'} c(e)
$$
$C(S, \bar S) $ is the sum of the capacities of the edges directed from $S$ to $\bar S $. We say an edge $e$ \emph{traverses} the cut if $e=(s,\bar s) \, s \in S \, \bar s \in \bar S$

\end{defi}

\begin{lem}\label{value off} For any cut $(S,S')$
$$
|f|=f(S,S')-f(S',S)
$$
where $f(X,Y)$ is the sum of the values of $f$ at the edges directed from $X$ to $Y$.
\begin{proof}
$$
|f|=\sum_{e \in E} f(e)\phi(s,e)=\sum_{v \in S}\sum_{e \in E} f(e) \phi(v,e)= \sum_{e \in E}\sum_{ v \in S} f(e) \phi(v,e)
$$
$$
= \underbrace{ \sum_{e:S\rightarrow S} \sum_{ v \in S} f(e) \phi(v,e)}_{=0} + \underbrace{ \sum_{e:S\rightarrow S'}\sum_{ v \in S} f(e) \phi(v,e)}_{=f(S,S')} + \underbrace{ \sum_{e: S' \rightarrow S} \sum_{ v \in S} f(e) \phi(v,e)}_{-f(S',S)}
$$
\end{proof}
\end{lem}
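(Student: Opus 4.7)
The plan is to start from the definition $|f| = \sum_{e \in E} f(e)\phi(s,e)$ and exploit the conservation law in order to replace the single term at $s$ by an average over the whole block $S$. Since every interior vertex $v \neq s,t$ satisfies $\sum_{e \in E} f(e)\phi(v,e) = 0$, and since the hypothesis $t \in \bar S$ guarantees $t \notin S$, I can add these zero contributions for all $v \in S \setminus \{s\}$ without changing the left-hand side, obtaining
$$
|f| = \sum_{v \in S} \sum_{e \in E} f(e)\phi(v,e).
$$

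Next I would interchange the order of summation and analyze, for each fixed edge $e = (u,w)$, the coefficient $\sum_{v \in S} \phi(v,e)$. By the definition of the incidence function, only the endpoints of $e$ contribute: the tail $u$ contributes $+1$ if $u \in S$ and $0$ otherwise, while the head $w$ contributes $-1$ if $w \in S$ and $0$ otherwise. Splitting $E$ into the four classes determined by the sides of the cut containing the endpoints of $e$ yields coefficients $1-1=0$ for edges internal to $S$, $0$ for edges internal to $\bar S$, $+1$ for edges from $S$ to $\bar S$, and $-1$ for edges from $\bar S$ to $S$.

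Substituting these four cases into the double sum collapses it to
$$
|f| = \sum_{e\colon S \to \bar S} f(e) \; - \; \sum_{e\colon \bar S \to S} f(e) = f(S,\bar S) - f(\bar S, S),
$$
which is the desired identity. There is no real obstacle here beyond bookkeeping: the one point that must not be overlooked is verifying $t \notin S$ so that conservation genuinely applies at every $v \in S \setminus \{s\}$, and tracking the signs carefully for edges crossing the cut in either direction.
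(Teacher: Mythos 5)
Your proposal is correct and follows essentially the same route as the paper's proof: start from $|f|=\sum_{e}f(e)\phi(s,e)$, add the vanishing conservation sums for the other vertices of $S$, interchange the order of summation, and classify edges by which side of the cut their endpoints lie on. The only (harmless) difference is that you explicitly record the fourth class of edges internal to $\bar S$ and the check that $t\notin S$, details the paper leaves implicit.
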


\begin{lem}\label{maxleqmin}
Given a network $(G,c)$, for any flow $f$ and any cut $(S, \bar S)$
$$
|f| \leq C(S, \bar S) 
$$
\end{lem}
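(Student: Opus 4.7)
The plan is to combine Lemma \ref{value off} with the two defining properties of a flow: the pointwise capacity constraint $f(e)\leq c(e)$ and the nonnegativity $f(e)\geq 0$.

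First I would apply Lemma \ref{value off} to rewrite $|f|$ in terms of the cut:
$$|f| = f(S,\bar S) - f(\bar S, S).$$
This reduces the problem to bounding $f(S,\bar S) - f(\bar S, S)$ by $C(S,\bar S)$.

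Next, I would bound each of the two terms separately. For $f(S,\bar S) = \sum_{e:S\to\bar S} f(e)$, the capacity constraint $f(e)\leq c(e)$ applied edge-by-edge yields $f(S,\bar S) \leq \sum_{e:S\to\bar S} c(e) = C(S,\bar S)$, using exactly the set $E'$ of edges traversing the cut from Definition \ref{cut}. For $f(\bar S, S) = \sum_{e:\bar S\to S} f(e)$, the nonnegativity of $f$ (which is built into Definition \ref{flow} since $f:E\to\R_+$) gives $f(\bar S, S)\geq 0$.

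Putting these inequalities together,
$$|f| = f(S,\bar S) - f(\bar S, S) \leq C(S,\bar S) - 0 = C(S,\bar S),$$
which is the desired bound. There is no real obstacle here; the entire content of the lemma is packaged inside Lemma \ref{value off}, and the remaining step is just a direct application of the two flow axioms to the two terms produced by that identity.
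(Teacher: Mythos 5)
Your proof is correct, but it takes a different route from the one the paper actually uses. The paper states this lemma without an immediate proof and later derives it as a corollary of the LP machinery in Section \ref{lp}: it constructs, for any cut $(S,\bar S)$, a feasible solution of the dual program (\ref{dual}) whose objective value equals $C(S,\bar S)$, and then invokes weak duality (Theorem \ref{weakduality}) to conclude that the value of any flow is bounded by the value of any feasible dual solution, hence by any cut capacity. Your argument is the elementary combinatorial one: Lemma \ref{value off} gives $|f| = f(S,\bar S) - f(\bar S,S)$, the capacity constraint bounds the first term by $C(S,\bar S)$, and nonnegativity of $f$ makes the second term a help rather than a hindrance. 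Both are sound. Your version is self-contained, needs nothing beyond Definition \ref{flow} and the identity that immediately precedes the lemma in the text (which is almost certainly why the author placed Lemma \ref{value off} there), and also makes visible the equality case --- $|f| = C(S,\bar S)$ exactly when every forward edge is saturated and every backward edge carries zero flow --- which is what the proof of Lemma \ref{nopath} later exploits. The paper's LP route buys uniformity: the same weak-duality template, applied to the randomized-rounding construction in the subsequent lemma, yields the full max-flow/min-cut theorem, so deriving the easy inequality from duality keeps the whole development inside one framework.
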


The problem of MINCUT is to find a cut of $(G,c)$ of minimum capacity. Two of the most important results are the following

\begin{teo}\label{maxflowmincut}
MAXFLOW=MINCUT. This means the net flow of a maximal flow is equal to the capacity of a minimal cut.
\end{teo}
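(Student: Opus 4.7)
The plan is to establish both inequalities. The direction $\mathrm{MAXFLOW} \leq \mathrm{MINCUT}$ is already free: applying Lemma~\ref{maxleqmin} to any maximum flow and any minimum cut gives it immediately. All the work is in producing, for a given maximum flow, a cut whose capacity matches its value.

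By Theorem~\ref{existence}, a maximum flow $f$ exists. I will build an explicit cut from $f$ via the \emph{residual graph} $G_f$: this is the directed graph on $V$ in which there is an edge $u \to v$ whenever either $(u,v) \in E$ with $f(u,v) < c(u,v)$ (a forward residual edge), or $(v,u) \in E$ with $f(v,u) > 0$ (a backward residual edge). Let $S$ be the set of vertices reachable from $s$ by a directed path in $G_f$, and set $\bar S = V \setminus S$.

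The central claim is that $t \notin S$. Suppose, for contradiction, that there is a path $s = v_0, v_1, \dots, v_k = t$ in $G_f$. Define $\delta > 0$ as the minimum residual capacity along this path (namely $c(v_i,v_{i+1}) - f(v_i,v_{i+1})$ on forward edges and $f(v_{i+1},v_i)$ on backward edges). Updating $f$ by $+\delta$ on the forward edges and $-\delta$ on the backward edges of the path yields a function $f'$ that still respects the capacity constraints by the choice of $\delta$, still satisfies conservation at each internal $v_i$ (net change at $v_i$ vanishes, whether the in/out edges are forward or backward), and has $|f'| = |f| + \delta > |f|$, contradicting maximality. Hence $t \in \bar S$ and $(S,\bar S)$ is a cut.

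By construction of $S$, any edge $(u,v) \in E$ with $u \in S$, $v \in \bar S$ must be saturated ($f(u,v) = c(u,v)$), for otherwise the forward residual edge would place $v$ in $S$; similarly, any edge $(u,v) \in E$ with $u \in \bar S$, $v \in S$ must carry zero flow, for otherwise the backward residual edge would place $u$ in $S$. Therefore $f(S,\bar S) = C(S,\bar S)$ and $f(\bar S, S) = 0$. Applying Lemma~\ref{value off} gives $|f| = f(S,\bar S) - f(\bar S, S) = C(S,\bar S)$, proving the reverse inequality and closing the argument.

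The main obstacle is the augmentation step: one must verify carefully that pushing $\delta$ units along a mixed forward/backward path in $G_f$ actually yields a valid flow (capacity and conservation at every internal vertex) and strictly increases the net flow. Once that lemma is in hand, the rest is essentially a bookkeeping argument about the reachable set $S$.
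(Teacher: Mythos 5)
Your proof is correct, but it takes a genuinely different route from the paper. The paper proves this theorem by LP duality: it writes MAXFLOW in standard form, computes the dual program, shows that every cut yields a feasible dual solution of equal value, and then extracts from an optimal dual solution a cut of no greater capacity via a randomized rounding argument (choosing a uniform threshold $\chi \in [-1,0]$ and cutting at that level); the equality then follows from strong duality (Theorem~\ref{strongduality}, which the paper states without proof). Your argument is the classical combinatorial one: take a maximum flow, form the residual graph, let $S$ be the set of vertices reachable from $s$, rule out $t \in S$ by the augmenting-path contradiction, and observe that every edge crossing from $S$ to $\bar S$ is saturated while every edge crossing back carries zero flow, so Lemma~\ref{value off} gives $|f| = C(S,\bar S)$. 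This is essentially the content of the paper's later Lemma~\ref{nopath} in the Ford--Fulkerson section, repackaged as a direct proof of the theorem. What each approach buys: yours is elementary and self-contained, needing only weak duality (Lemma~\ref{maxleqmin}) and the existence theorem, and it produces an explicit minimum cut; the paper's approach leans on unproved strong duality but exposes the LP structure and the dual-rounding technique that the final section tries to generalize to higher-dimensional networks, where no residual-path argument is available. One small point worth making explicit in your writeup: the first edge of any residual $s$--$t$ path is necessarily a forward edge of an actual edge out of $s$ (the network forbids edges into the source), and the path may be taken simple, which is what guarantees $|f'| = |f| + \delta$ rather than some smaller change.
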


In order to prove these results, we will see that the dual program of MAXFLOW is a relaxation of the MINCUT problem and use the following:

\begin{teo}\label{weakduality} \emph{(weak duality)}. 
Let $x^*$ and $y^*$ be feasible solutions to a primal problem and its dual, respectively, then
$$
c^T x^* \leq b^T y^*
$$
\end{teo}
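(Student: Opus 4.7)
The plan is to sandwich the scalar $y^{*T} A x^{*}$ between $c^T x^*$ and $b^T y^*$ by applying the feasibility inequalities of the two programs, exploiting the sign conditions $x^* \geq 0$ and $y^* \geq 0$ to preserve the direction of each inequality under multiplication.

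First I would use primal feasibility. Since $A x^* \leq b$ componentwise and $y^* \geq 0$, taking the inner product with $y^*$ preserves the inequality, giving
\begin{displaymath}
y^{*T} A x^* \leq y^{*T} b = b^T y^*.
\end{displaymath}
Next I would use dual feasibility. Since $A^T y^* \geq c$ componentwise and $x^* \geq 0$, taking the inner product with $x^*$ preserves the inequality, giving
\begin{displaymath}
x^{*T} A^T y^* \geq x^{*T} c = c^T x^*.
\end{displaymath}

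The key observation that links these two bounds is that $y^{*T} A x^*$ is a scalar, and therefore equal to its own transpose $x^{*T} A^T y^*$. Chaining the two inequalities through this identity yields
\begin{displaymath}
c^T x^* \leq x^{*T} A^T y^* = y^{*T} A x^* \leq b^T y^*,
\end{displaymath}
which is the desired conclusion.

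There is essentially no obstacle here: the argument is three lines of bookkeeping, and the only subtlety worth flagging is that the nonnegativity of $x^*$ and $y^*$ is what lets us multiply inequalities componentwise without flipping them. Everything else is linear algebra. The real interest of the theorem lies not in its proof but in its use in the sequel to certify optimality and to show that the dual of MAXFLOW relaxes MINCUT, so I would keep the proof short and move on.
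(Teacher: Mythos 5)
Your proof is correct, but it takes a genuinely different route from the paper. You give the classical direct argument: sandwich the scalar $y^{*T} A x^{*}$ between the two objective values using primal feasibility ($Ax^* \leq b$, paired with $y^* \geq 0$) and dual feasibility ($A^T y^* \geq c$, paired with $x^* \geq 0$), then identify the middle quantity with its own transpose. The paper instead argues via a Lagrangian relaxation: it introduces the function $g(y_1,y_2) = \max_x \left( c^T x + y_1^T(b - Ax) + y_2^T x \right)$, observes that $g$ upper-bounds $c^T x^*$ for every feasible $x^*$ and every $y_1, y_2 \geq 0$, and then shows that minimizing $g$ over nonnegative multipliers recovers exactly the dual program of Definition 2.8. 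The two approaches buy different things. Yours is shorter, entirely elementary, and makes transparent exactly which hypotheses (the two feasibility conditions and the two sign conditions) are used and where; it is the proof one would want if the dual is taken as given by definition, as it is in this paper. The paper's Lagrangian route is longer and somewhat less tidy in its final bookkeeping, but it has the pedagogical advantage of \emph{deriving} the form of the dual rather than merely verifying an inequality against it --- it explains why the dual constraints are $A^T y \geq c$ and why the dual objective is $b^T y$. Since the paper has already stated the dual as a definition, your direct argument is a perfectly valid and arguably cleaner substitute.
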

\begin{proof}
For $y_1,y_2 \geq 0 $ define the function

$$
g(y_1,y_2):= {\max_x}  c^Tx + y_1^T(b-Ax) + y_2^T x 
$$
Clearly, for any feasible $x^*$ and $y_1,y_2 \geq 0 $, $c^Tx^* \leq g(y_1,y_2)$.\\

Rearranging terms we have

$$
g(y_1, y_2)= {\max_x} (c^T - y_1^TA + y_2^T)x + y_1^T b
$$

where $y_1 \in \R^m$ and $y_2 \in \R^n $. Then we have
\begin{displaymath}
   g(y_1,y_2) = \left\{
     \begin{array}{rl}
      y_1^T b & : c^T - y_1^TA + y_2^T=0 \\
       \infty & : \mbox{else} 
     \end{array}
   \right.
\end{displaymath} 

Now minimizing over $y_1,y_2 \geq 0 $ we have, for any feasible $x^*$

$$
c^Tx^* \leq \min_{y_1,y_2 \geq 0 } g(y_1,y_2) 
$$

By the previous observation this is equivalent to $\min_{y_1,y_2\geq 0}  y_1^T b $ subject to $  c^T - y_1^TA + y_2^T=0 $. We see that   $  c^T - y_1^TA + y_2^T=0  \Rightarrow c^T - y_1^TA = -y_2^T \leq 0 $ and this is equivalent to a single variable $y \geq 0 $ such that $c^T - y^T A \leq 0 \Rightarrow c^T \leq y^T A$ and this is the dual problem, as we wanted to show.

\end{proof}

\begin{teo}\label{strongduality} \emph{(strong duality)}. 
If the primal problem has an optimal solution $x^*$, then the dual problem also has an optimal solution $y^*$ and
$$
c^T x^* = b^T y^*
$$
\end{teo}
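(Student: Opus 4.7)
The plan is to deduce strong duality from weak duality via Farkas' Lemma: the system $Ax \leq b$, $x \geq 0$ is infeasible if and only if there exists $y \geq 0$ with $A^T y \geq 0$ and $b^T y < 0$. Let $v^\ast := c^T x^\ast$ denote the optimal primal value. Since $x^\ast$ is optimal, for every $\epsilon > 0$ the augmented system
\[
Ax \leq b, \quad -c^T x \leq -v^\ast - \epsilon, \quad x \geq 0
\]
has no solution. Farkas' Lemma, applied to this system, produces multipliers $y_\epsilon \geq 0$ and $\lambda_\epsilon \geq 0$, not both zero, satisfying $A^T y_\epsilon - \lambda_\epsilon c \geq 0$ and $b^T y_\epsilon - \lambda_\epsilon(v^\ast + \epsilon) < 0$.

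The crucial verification is that $\lambda_\epsilon > 0$: if instead $\lambda_\epsilon = 0$, then from $A^T y_\epsilon \geq 0$ together with $y_\epsilon \geq 0$ and the primal feasible $x^\ast \geq 0$ we would obtain $0 \leq (Ax^\ast)^T y_\epsilon \leq b^T y_\epsilon < 0$, a contradiction. Rescaling, the vector $\tilde y_\epsilon := y_\epsilon / \lambda_\epsilon$ satisfies $A^T \tilde y_\epsilon \geq c$, $\tilde y_\epsilon \geq 0$, and $b^T \tilde y_\epsilon < v^\ast + \epsilon$; that is, $\tilde y_\epsilon$ is dual feasible with objective value strictly less than $v^\ast + \epsilon$. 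Weak duality (Theorem \ref{weakduality}) supplies the matching lower bound $b^T \tilde y_\epsilon \geq v^\ast$, so $v^\ast \leq b^T \tilde y_\epsilon < v^\ast + \epsilon$.

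To conclude, I would argue that the dual infimum $v^\ast$ is actually attained. The dual feasible region is a non-empty closed polyhedron on which $b^T y$ is bounded below by $v^\ast$, and by a standard polyhedral argument (analogous to the compactness step used in Theorem \ref{existence}, after restricting to a bounded face containing a minimizing basic feasible solution) the linear program attains its minimum at some $y^\ast$; the sandwich $v^\ast \leq b^T \tilde y_\epsilon < v^\ast + \epsilon$ combined with weak duality then forces $b^T y^\ast = v^\ast$, so $y^\ast$ is dual-optimal. The main obstacle is Farkas' Lemma itself, whose proof requires establishing that a finitely generated convex cone in $\R^m$ is closed and then separating $b$ (augmented with the objective row) from this cone by a hyperplane; once Farkas is in hand, the remaining algebra in the steps above is routine.
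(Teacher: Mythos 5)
The paper does not actually prove Theorem \ref{strongduality}: it proves weak duality via a Lagrangian/penalty-function argument and then invokes strong duality as a known fact from LP theory, so there is no in-paper proof to compare yours against. Your Farkas-based argument is the standard way to fill this hole and it is essentially correct. The application of the inequality form of Farkas' Lemma to the augmented system $Ax\leq b$, $-c^Tx\leq -v^*-\epsilon$, $x\geq 0$ is the right move, and your verification that the multiplier $\lambda_\epsilon$ on the objective row is strictly positive (otherwise $0\leq (A^Ty_\epsilon)^Tx^*\leq b^Ty_\epsilon<0$) is exactly the step that usually gets botched; you have it right. Combined with the paper's Theorem \ref{weakduality} this pins the dual optimal value to $v^*$.

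Two ingredients remain external to your argument, and you should be explicit that they are being assumed rather than proved. First, Farkas' Lemma itself, which you correctly identify as resting on the closedness of finitely generated cones plus a separating-hyperplane argument; this is of comparable depth to strong duality, so the proof is a genuine reduction rather than a free lunch. Second, the attainment step: your phrase ``restricting to a bounded face containing a minimizing basic feasible solution'' presupposes that a minimizing basic feasible solution exists, which is the fundamental theorem of linear programming and is not quite the compactness argument of Theorem \ref{existence} (the dual feasible region here is unbounded). A cleaner finish that avoids this entirely is to run your Farkas argument once more, or note that it already yields dual feasible points with value in $[v^*,v^*+\epsilon)$ for every $\epsilon$, and then appeal to the fact that the set of dual feasible $y$ with $b^Ty\leq v^*+1$ is a nonempty closed set on which one can extract an optimizer via Minkowski--Weyl; either way, state which polyhedral fact you are using.
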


To find the dual of our problem, we state it in standard form

\begin{equation}\label{primal}
\begin{array}{ccccc}
{\displaystyle \mathop{\max}_{x \in \R^m} \Phi_{1,j} \cdot  x }& ; & \left [ \begin{array}{c} \Phi^* \\ - \Phi^* \\ I_m
\end{array} \right ] x \leq \left [ \begin{array}{c} 0_{n-2} \\ 0_{n-2} \\ c  \end{array} \right ]&;& x \geq 0
\end{array}
\end{equation}

Where $0_n$ is the zero column vector of length $n$. Then we find the dual to be

$$
\begin{array}{ccccc}
{\displaystyle \mathop{\min}_{y \in \R^{2(n-2) +m}}  \left [ \begin{array}{c} 0_{n-2} \\ 0_{n-2} \\ c  \end{array} \right ]^T  y }& ; & \left [ \begin{array}{ccc} \Phi^{*T} & - \Phi^{*T} &  I_m
\end{array} \right ] y \geq \Phi_{1,j}^T &;& y \geq 0
\end{array}
$$

After further inspection this is equivalent to $n-2$ unrestricted in sign variables, one for each vertex $v(i) \neq s,t$ called $v_i$, and $m$ variables, one for each edge $e_j$ such that

\begin{equation}\label{dual}
\begin{array}{ccccc}
{\displaystyle \mathop{\min}_{[v,e] \in \R^{n-2+m}} c^T  e }& ; & \left [ \begin{array}{cc} \Phi^{*T}  &  I_m
\end{array} \right ] [v,e]^T \geq \Phi_{1,j}^T &;& e \geq 0
\end{array}
\end{equation}

These restrictions translate to the following set of inequalities: 

$$
v_i - v_j + e_k  \geq 0 : v(i) \neq s,t \,\, v(j) \neq s,t \,\, e(k)=(v(i),v(j)) : i<j
$$
$$
-v_i + e_k \geq 1 : v(i) \neq s,t \,\, e(k)=(s,v(i))
$$
$$
v_i + e_k \geq 0 : v(i) \neq s,t \,\, e(k)=(v(i),t)
$$
$$
e_k \geq 0 \,\, \forall k \in \{1, \ldots , m \}
$$

We can define $v_1= -1$ and $v_n = 0 $ and we write all the equations in the form

$$
v_i - v_j + e_k  \geq 0 : e(k)=(v(i),v(j)) : i<j
$$

\begin{lem} For any cut $(S, \bar S) $ there exists a feasible solution of (\ref{dual}) such that the value of the function at this feasible solution equals the capacity of the cut.
\begin{proof}
Let $(S,\bar S) $ be a cut. Define $e_k = 1$ if and only if $e(k)=(s,\bar s) $ with $s \in S$, $\bar s \in \bar S$ and $e_k =0$ in any other case. $v_i = -1 $ if and only if $v(i) \in S$ and $v_i=0$ in any other case. Then $c^T e = C(S,\bar S) $ and it is straightforward to check that the restrictions hold.\\
\end{proof} 
\end{lem}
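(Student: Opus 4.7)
The plan is to exhibit the explicit dual-feasible point suggested by the lemma and verify (a) all constraints of (\ref{dual}) hold and (b) the objective value equals $C(S,\bar S)$. Concretely, I would set $e_k = 1$ precisely when $e(k)$ traverses the cut from $S$ to $\bar S$ (and $e_k = 0$ otherwise), and $v_i = -1$ precisely when $v(i) \in S$ (and $v_i = 0$ otherwise). This is consistent with the normalization $v_1 = -1$ adopted earlier (since $s \in S$) and with $v_n = 0$ (since $t \in \bar S$), so the uniform inequality $v_i - v_j + e_k \geq 0$ for every edge $e(k)=(v(i),v(j))$ is the only constraint to check, together with $e \geq 0$.

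The objective-value computation is immediate: by the definition of the candidate $e$, the sum $c^T e = \sum_k c(e(k))\, e_k$ collapses to the sum of $c(e(k))$ taken over edges directed from $S$ to $\bar S$, which is exactly $C(S,\bar S)$ by Definition \ref{cut}. For the constraints I would proceed by case analysis on the location of the two endpoints of each edge $e(k)=(v(i),v(j))$. If both endpoints lie in $S$ or both in $\bar S$, then $v_i - v_j = 0$ and $e_k = 0$, so the constraint holds with equality. If $v(i) \in \bar S$ and $v(j) \in S$, then $v_i - v_j = 1$ and $e_k = 0$, so the constraint holds strictly. The only tight nontrivial case is $v(i) \in S$, $v(j) \in \bar S$, where $v_i - v_j = -1$ and the constraint demands $e_k \geq 1$; this is precisely the case in which we assigned $e_k = 1$, so it holds with equality. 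Nonnegativity of $e$ is obvious from the definition.

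I expect no real obstacle here: the lemma is essentially a translation between the combinatorial notion of a cut and the corresponding $0/1$-valued point in the dual polytope. The only bookkeeping subtlety is making sure that the normalization $v_1 = -1$, $v_n = 0$ correctly absorbs the three separate families of inequalities listed after (\ref{dual}) (edges incident to $s$, incident to $t$, and between interior vertices) into the single uniform inequality used above, which is exactly what the author has already arranged.
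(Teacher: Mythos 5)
Your proposal is correct and uses exactly the same construction as the paper: the identical $0/1$ assignment $e_k=1$ on cut-traversing edges and $v_i=-1$ on $S$, with the same objective-value computation. The paper simply asserts that the constraints are ``straightforward to check,'' whereas you carry out the case analysis explicitly; this is a welcome elaboration, not a different argument.
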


As a corollary we get

\begin{cor} Lemma (\ref{maxleqmin})
\end{cor}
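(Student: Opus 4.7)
The plan is to observe that this corollary is an immediate instance of weak duality applied to the primal/dual pair we have just set up. In one sentence: the primal feasible solutions are exactly the flows (with primal objective $|f|$), the previous lemma exhibits a dual feasible point for each cut with dual objective $C(S,\bar S)$, and Theorem \ref{weakduality} forces the primal value to be bounded above by the dual value.

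More precisely, first I would fix an arbitrary flow $f$ on $(G,c)$ and an arbitrary cut $(S,\bar S)$. Passing through the bijection $\psi$ from the proof of Theorem \ref{existence}, the vector $x := \psi(f)$ is a feasible solution of the primal program (\ref{primal}), with objective value $\Phi_{1,j}\cdot x = |f|$. Next, invoking the previous lemma, I would produce a feasible solution $[v,e]$ of the dual program (\ref{dual}) whose objective value $c^T e$ equals $C(S,\bar S)$.

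Finally, I would apply Theorem \ref{weakduality} to the pair $(x,[v,e])$ to conclude
\[
|f| \;=\; \Phi_{1,j}\cdot x \;\leq\; c^T e \;=\; C(S,\bar S).
\]
Since $f$ and $(S,\bar S)$ were arbitrary, this is exactly the statement of Lemma \ref{maxleqmin}. There is no serious obstacle: the only point that needs any care is making sure the standard-form reduction from (\ref{primal}) to (\ref{dual}) (splitting the equality constraint $\Phi^* x = 0$ into two inequalities and then collapsing the resulting pair of nonnegative dual multipliers into a single sign-unrestricted variable $v_i$) matches the hypotheses of Theorem \ref{weakduality}, but this is exactly the reformulation carried out immediately above the previous lemma, so nothing new is required.
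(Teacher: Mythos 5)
Your proposal is correct and is exactly the argument the paper intends: the corollary is obtained by combining the preceding lemma (which turns any cut into a dual feasible point of objective value $C(S,\bar S)$) with Theorem \ref{weakduality} applied to the flow as a primal feasible point. Nothing differs from the paper's route, and your added care about the standard-form reformulation is a reasonable but minor point.
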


\begin{lem} For an optimal solution $[v^*,e^*]$ of (\ref{dual}) there exists a cut $(S,\bar S)$ such that $C(S, \bar S) \leq c^T e^*$.
\begin{proof} Let $\chi \in [-1,0] $ be a random variable with uniform distribution. Define a random variables    for each edge by

\begin{displaymath}
   e(k)_\chi = \left\{
     \begin{array}{rl}
       1 & : v_i^*\leq \chi \leq v_j^* \\
       0 & : \mbox{else} 
     
     \end{array}
   \right.
\end{displaymath} 

 Note that this assignment defines a random cut. If $v_i^* < v_j^* $ then $ \Prob (v_i^* \leq \chi < v_j^* ) = \min \{ 1, v_j^* - v_i^* \} \leq v_j^* - v_i^* \leq e_k^* $ then by the restrictions of the problem,  we get
$$
\mathbb{E} (C(S, \bar S )) = \sum c_k \mathbb{E}( e(k)_\chi) \leq \sum c_k e_k^* = c^T e^*
$$
As the expected value of the random cut capacity is less or equal to the optimal value of the problem, there exists a cut of capacity less or equal to the optimal value.\\
\end{proof}
\end{lem}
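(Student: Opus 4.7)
The plan is to produce a cut from the optimal dual solution $[v^*, e^*]$ by a randomized threshold rounding of the potentials $v_i^*$, using the boundary conventions $v_1^* = -1$ and $v_n^* = 0$ under which every dual constraint takes the uniform form $e_k^* \ge v_j^* - v_i^*$ for the edge $e(k) = (v(i), v(j))$. The key insight is that under these conventions the $v_i^*$ can be interpreted as vertex ``heights'' in $[-1,0]$ and the $e_k^*$ bound the height gaps along edges, so a random horizontal slice of the heights produces a cut whose expected capacity is controlled by $c^T e^*$.

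Concretely, I would sample $\chi$ uniformly from $[-1, 0]$ and set
$$
S_\chi := \{v(i) : v_i^* \le \chi\}, \qquad \bar S_\chi := V \setminus S_\chi.
$$
Because $v_1^* = -1 \le \chi$ always and $v_n^* = 0 > \chi$ almost surely, $s \in S_\chi$ and $t \in \bar S_\chi$, so $(S_\chi, \bar S_\chi)$ is a valid cut with probability one. An edge $e(k) = (v(i), v(j))$ crosses the cut exactly when $v_i^* \le \chi < v_j^*$, and this event has probability
$$
\max\bigl\{0,\ \min(v_j^*, 0) - \max(v_i^*, -1)\bigr\} \le \max\{0, v_j^* - v_i^*\} \le e_k^*,
$$
using dual feasibility. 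Linearity of expectation then gives $\mathbb{E}[C(S_\chi, \bar S_\chi)] \le \sum_k c_k e_k^* = c^T e^*$, so by the probabilistic method some realization $\chi_0$ must produce a cut of capacity at most $c^T e^*$.

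The step I expect to require the most care is verifying the probability bound $\Prob(v_i^* \le \chi < v_j^*) \le e_k^*$ uniformly across all edge types — edges from $s$, edges to $t$, interior edges, and a possible direct edge $(s,t)$ — since the interior potentials $v_i^*$ are unrestricted in sign and may lie outside $[-1, 0]$ while $\chi$ is confined to this interval. The common thread is the elementary inequality $\min(v_j^*, 0) - \max(v_i^*, -1) \le v_j^* - v_i^*$ combined with the nonnegativity of $e_k^*$ and dual feasibility, but writing out the cases cleanly (and checking that the boundary conventions $v_1^* = -1$, $v_n^* = 0$ integrate smoothly with a uniform $\chi$ on $[-1,0]$) is where the bookkeeping lies. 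Once that uniform bound is in hand, the remainder of the proof is a one-line application of linearity of expectation.
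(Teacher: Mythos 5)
Your proposal is correct and follows essentially the same argument as the paper: a uniform threshold $\chi$ on $[-1,0]$ applied to the dual potentials (with $v_1^*=-1$, $v_n^*=0$), the bound $\Prob(v_i^*\le\chi<v_j^*)\le e_k^*$ from dual feasibility, linearity of expectation, and the first-moment principle. Your version is in fact slightly more careful than the paper's, since you explicitly define the random cut $S_\chi$ and handle potentials lying outside $[-1,0]$ via $\max\{0,\min(v_j^*,0)-\max(v_i^*,-1)\}$, where the paper only writes $\min\{1,v_j^*-v_i^*\}$.
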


This proves that the dual of MAXFLOW is in fact a relaxation of MINCUT and we get, by strong duality

\begin{cor} Theorem (\ref{maxflowmincut})
\end{cor}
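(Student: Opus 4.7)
The plan is to assemble the pieces already in place: strong duality (Theorem \ref{strongduality}) lets us transfer optimality between the primal (MAXFLOW) and dual programs, and the two lemmas immediately preceding the corollary relate feasible/optimal dual solutions to cuts. So the strategy is a sandwich argument showing MAXFLOW $\le$ MINCUT and MINCUT $\le$ MAXFLOW.

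First I would let $x^*$ be an optimal primal solution of (\ref{primal}), so that $\Phi_{1,j}\cdot x^*$ equals the maximum net flow, and let $[v^*,e^*]$ be an optimal dual solution of (\ref{dual}), which exists and has matching objective by Theorem \ref{strongduality}. For the inequality MAXFLOW $\le$ MINCUT, pick any cut $(S,\bar S)$; the first of the two preceding lemmas produces a feasible dual solution $[v,e]$ with $c^T e = C(S,\bar S)$. Weak duality (Theorem \ref{weakduality}) then gives $\Phi_{1,j}\cdot x^* \le c^T e = C(S,\bar S)$. Taking the minimum over all cuts yields MAXFLOW $\le$ MINCUT.

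For the reverse inequality, apply the second preceding lemma to the optimal dual solution $[v^*,e^*]$: it produces a cut $(S,\bar S)$ with $C(S,\bar S)\le c^T e^*$. By strong duality, $c^T e^* = \Phi_{1,j}\cdot x^*$, so this cut satisfies $C(S,\bar S)\le \text{MAXFLOW}$, hence MINCUT $\le$ MAXFLOW. Combining the two inequalities gives the equality asserted in Theorem \ref{maxflowmincut}.

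There is no real obstacle at this point: the combinatorial content (that cuts correspond to feasible dual solutions, and that a rounding argument on an optimal dual solution produces a cut of no larger capacity) has already been established in the two lemmas, and strong duality is quoted as a black box. The only care needed is to be explicit that the randomized rounding of the second lemma actually yields a cut attaining the upper bound in expectation, so that \emph{some} deterministic cut witnesses the inequality; but this is already packaged inside the lemma's statement. The corollary is therefore a one-line consequence: MINCUT $\le c^T e^* = \Phi_{1,j}\cdot x^* = \text{MAXFLOW} \le \text{MINCUT}$.
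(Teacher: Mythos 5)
Your argument is correct and is exactly the route the paper takes: the two preceding lemmas show the dual of MAXFLOW is a relaxation of MINCUT, and strong duality (together with weak duality for the easy direction) closes the sandwich. You have merely made explicit the chain of inequalities that the paper leaves implicit in the sentence preceding the corollary.
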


\newpage

\section{MAXFLOW algorithms}\label{algs}
\vspace{1cm}
\subsection{The Ford-Fulkerson algorithm}
L.R, Ford Jr. and D.R. Fulkerson devised a polynomial time algorithm to compute a maximal flow first published in 1962 \cite{Ford}. We introduce some new concepts needed to describe the algorithm, and prove some general facts about it.\\

\begin{defi} Given a network $(G,c)$ and a flow $f$ we define $\bar c, \bar f : V \times V \rightarrow \R $
\begin{displaymath}
   \bar c(u,v)= \left\{
     \begin{array}{rl}
       c(u,v) & : (u,v) \in E \\
       0  &: \mbox{else}
     \end{array}
   \right.
\end{displaymath} 

\begin{displaymath}
   \bar f(u,v)= \left\{
     \begin{array}{rl}
       f(u,v) & : (u,v) \in E \\
       -f(u,v)  &: (v,u) \in E\\
       0 &: \mbox{else}
     \end{array}
   \right.
\end{displaymath} 
\end{defi}

\begin{lem}\label{cons} For a given flow $f$ on a network $(G,c)$, for any fixed vertex $u \neq s,t $
$$
\sum_{v\in V}\bar f (u,v) = 0
$$

\end{lem}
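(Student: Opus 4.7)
The plan is to reduce the sum $\sum_{v\in V} \bar f(u,v)$ to the standard conservation identity $\sum_{e\in E} f(e)\phi(u,e)=0$ that holds at every internal vertex by definition of a flow.

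First I would fix $u\neq s,t$ and split the index set $V$ of the sum according to the three cases in the definition of $\bar f$. Let
$$V_{\text{out}}(u) = \{v\in V : (u,v)\in E\},\qquad V_{\text{in}}(u) = \{v\in V : (v,u)\in E\},\qquad V_0(u) = V\setminus(V_{\text{out}}(u)\cup V_{\text{in}}(u)).$$
On $V_0(u)$ the summand vanishes, on $V_{\text{out}}(u)$ it equals $f(u,v)$, and on $V_{\text{in}}(u)$ it equals $-f(v,u)$. (I would note that for a simple directed graph these two non-trivial index sets are taken to be disjoint, so the splitting is unambiguous.) Hence
$$\sum_{v\in V}\bar f(u,v) \;=\; \sum_{v\in V_{\text{out}}(u)} f(u,v)\;-\;\sum_{v\in V_{\text{in}}(u)} f(v,u).$$

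Next I would rewrite each of these two sums as a sum over edges rather than over vertices, using the bijection $v\mapsto (u,v)$ between $V_{\text{out}}(u)$ and the edges $e\in E$ with $\phi(u,e)=1$, and the bijection $v\mapsto (v,u)$ between $V_{\text{in}}(u)$ and the edges $e\in E$ with $\phi(u,e)=-1$. After this rewriting the right-hand side becomes exactly
$$\sum_{e:\,\phi(u,e)=1} f(e)\cdot 1 \;+\; \sum_{e:\,\phi(u,e)=-1} f(e)\cdot(-1) \;=\; \sum_{e\in E} f(e)\phi(u,e),$$
since the edges $e$ with $\phi(u,e)=0$ contribute nothing.

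Finally, since $u\neq s,t$, the conservation-of-flow condition (ii) of Definition \ref{flow} asserts that this last sum is $0$, completing the proof. The only subtlety, which is really the only thing to watch, is the case analysis underlying the three-way partition of $V$: one must make sure the definition of $\bar f$ is being applied consistently for each $v$, which is why I would explicitly state the assumption that the two possibilities $(u,v)\in E$ and $(v,u)\in E$ are treated as mutually exclusive in the underlying simple directed graph. After that, the argument is a bookkeeping rewrite of the definition of $\bar f$ followed by a direct appeal to Definition \ref{flow}.
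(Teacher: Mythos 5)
Your proof is correct; the paper in fact states Lemma \ref{cons} with no proof at all, and your argument --- partitioning $V$ into out-neighbours, in-neighbours and the remaining vertices, re-indexing the two nontrivial pieces as sums over edges with $\phi(u,e)=\pm 1$, and then invoking conservation of flow from Definition \ref{flow} --- is precisely the definitional unpacking the author leaves to the reader. Your caveat about antiparallel edges is also well placed: the paper's definition of $\bar f$ is ambiguous when both $(u,v)$ and $(v,u)$ belong to $E$ (and its second case should really read $-f(v,u)$ rather than $-f(u,v)$, since $f$ is only defined on $E$), so making the disjointness assumption explicit is a small but genuine improvement over the source.
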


\begin{rem}\label{remark} In fact, it is equivalent to define a flow as a function $f$ mapping $V\times V$ to the reals such that the equation in definition \ref{cons} holds, and such that $f(u,v)=-f(v,u)$ for every pair of vertices. From now on we refer to a flow in this sense, and we refer to $\bar f, \bar c $ as $f$ and $c$ whenever it does not cause confusion. Under this new definitions we have that the capacity of a cut can be written as 
$$
C(S,S')=\sum_{u \in S, v \in S'} c(u,v)
$$
and lemma (\ref{value off}) translates to
$$
|f|=\sum_{u \in S, v \in S'} f(u,v)
$$
for any cut $(S,S')$
\end{rem}

\begin{defi}\label{residual} For a network $(G,c)$ and a flow $f$ we define the \emph{residual capacity} of a pair of vertices $(u,v)$ as $c_f(u,v)=c(u,v)-f(u,v)$. Any such pair with residual capacity greater than zero is called a residual edge. Note that the residual capacity is always greater or equal to zero. We define the \emph{residual graph} $G_f$ as the graph with vertex set that of $V(G)$ and edge set the set of residual edges.

\end{defi}

\begin{defi}\label{aug} Given a flow $f$ on a network $(G,c)$, an \emph{augmenting path} is a directed path on $G_f$ from source to sink.
\end{defi}

\begin{lem} \label{nopath}
a flow $f$ is maximal if and only if there is no augmenting path on $G_f$.
\begin{proof}
Suppose there is no augmenting path on $G_f$. Let $S$ be the set of vertices $v$ such that there exists a directed path from $s$ to $v$ in $G_f$. Let $S'=V \backslash S $. $(S,S')$ is then a cut. By definition of S, we have that $c_f(u,v)=0 \Rightarrow c(u,v)=f(u,v)$ for any $u \in S$, $v \in S'$ then we have, following remark (\ref{remark})
$$
C(S,S') = \sum_{u \in S, \, v \in S'} c(u,v)=  \sum_{u \in S, \, v \in S'} f(u,v)= |f|
$$
So $f$ is a maximal flow by  Theorem (\ref{maxflowmincut}) or Theorem(\ref{weakduality}).\\

Now suppose there is an augmenting path $(v_0=s,v_1,\ldots, v_{k-1}, v_k=t)$. Let $A=\min \{ c_f(v_i, v_{i+1}): i=0,\ldots, k-1 \}>0$. Define $F:V\times V \rightarrow \R $ as $F(v_i,v_{i+1})=f(v_i,v_{i+1}) + A$, $F(v_{i+1},v_{i})=f(v_{i+1},v_{i}) - A$ and $F(u,v)=f(u,v)$ on any other pair of vertices. One can easily check that $F$ is a flow, and that $|F|=|f|+A$ so $f$ is not a maximal flow.\\
\end{proof}
\end{lem}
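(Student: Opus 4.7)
The plan is to prove both directions separately. For the forward direction (no augmenting path $\Rightarrow$ $f$ is maximal), I would exhibit a cut whose capacity equals $|f|$, and then invoke Lemma \ref{maxleqmin} (or equivalently weak duality) to conclude optimality. Define $S$ as the set of vertices reachable from $s$ via directed paths in the residual graph $G_f$, and let $\bar S = V\setminus S$. By hypothesis $t\notin S$, so $(S,\bar S)$ is a genuine cut. The key observation is that for every $u\in S$ and $v\in\bar S$, the residual capacity $c_f(u,v)$ must be zero, because otherwise $v$ would be reachable from $s$ in $G_f$. In view of Definition \ref{residual} this means $f(u,v)=c(u,v)$ on every edge crossing the cut. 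Combining with the cut formula for $|f|$ from Remark \ref{remark} immediately gives $|f|=C(S,\bar S)$, hence $f$ is maximal.

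For the backward direction, I would argue by contrapositive: assuming an augmenting path $P=(s=v_0,v_1,\ldots,v_k=t)$ exists in $G_f$, I construct a strictly larger flow. Set $A=\min_{0\le i<k} c_f(v_i,v_{i+1})$, which is strictly positive by definition of the residual graph. Define $F$ by pushing $A$ units along $P$: replace $f(v_i,v_{i+1})$ by $f(v_i,v_{i+1})+A$ and $f(v_{i+1},v_i)$ by $f(v_{i+1},v_i)-A$, leaving all other values of $f$ unchanged. I would then check that $F$ is indeed a flow (using the convention of Remark \ref{remark}): the antisymmetry $F(u,v)=-F(v,u)$ is preserved by construction; conservation at internal vertices $v_1,\ldots,v_{k-1}$ holds because the $+A$ on the incoming edge cancels the $+A$ on the outgoing edge; and the capacity constraint $F(u,v)\le c(u,v)$ follows from $A\le c_f(v_i,v_{i+1})=c(v_i,v_{i+1})-f(v_i,v_{i+1})$. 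Finally, only the edge $(s,v_1)$ contributes a change at the source, giving $|F|=|f|+A>|f|$.

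The only step requiring any care is the verification that $F$ is a legitimate flow — in particular handling the case where $(v_i,v_{i+1})$ is a residual edge arising from a backward arc in $G$ (so $(v_{i+1},v_i)\in E$ and we are actually decreasing $f(v_{i+1},v_i)$). Under the extended convention of Remark \ref{remark}, where $f$ is an antisymmetric function on $V\times V$, this case is handled uniformly by the update rule and no case analysis is needed. This is the main conceptual simplification the remark was designed to provide, so I expect the proof to go through cleanly once the notation from Remark \ref{remark} is adopted.
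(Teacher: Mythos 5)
Your proposal is correct and follows essentially the same route as the paper: the forward direction constructs the reachability cut $S$ in $G_f$, shows every crossing edge is saturated so $|f|=C(S,\bar S)$, and concludes by weak duality, while the backward direction augments by $A=\min c_f(v_i,v_{i+1})$ along the path exactly as in the paper's proof. Your additional care in verifying that $F$ satisfies antisymmetry, conservation, and capacity under the convention of Remark \ref{remark} only makes explicit what the paper leaves as ``one can easily check.''
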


Now this theorem is the basic result needed to state the Ford-Fulkerson algorithm. Starting with the zero flow, as long as there exists an augmenting path with respect to such flow, we can increase the value of the flow by $A$ as defined in the above proof. 

\begin{lem} If $(G,c)$ is such that $c(u,v) \in \mathbb{Z} $ then the algorithm terminates.
\begin{proof} At each step of the algorithm, the value is increased by $A \geq 1 $ so a maximal flow is reached after a finite number of steps.\\

\end{proof}

\end{lem}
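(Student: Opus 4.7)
The plan is to track two quantities across iterations of the Ford--Fulkerson algorithm: the integrality of the current flow, and an a priori upper bound on its net flow value. The zero flow is integral, and the claim I would prove by induction on the iteration count is that every flow $f_k$ produced by the algorithm takes values in $\mathbb{Z}$.

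For the inductive step, assume $f_k$ is integral. Since $c$ is integral by hypothesis, the residual capacity $c_{f_k}(u,v) = c(u,v) - f_k(u,v)$ is an integer for every pair. By Definition \ref{residual}, an edge of $G_{f_k}$ has strictly positive residual capacity, so along any augmenting path $(v_0=s,\ldots,v_k=t)$ every $c_{f_k}(v_i,v_{i+1})$ is a positive integer. Consequently
$$
A = \min_{0 \leq i \leq k-1} c_{f_k}(v_i, v_{i+1})
$$
is a positive integer, in particular $A \geq 1$. The new flow $f_{k+1}$ differs from $f_k$ by $\pm A$ on finitely many pairs, so $f_{k+1}$ is again integral, closing the induction.

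Next I would exhibit a finite upper bound on $|f|$ valid throughout the algorithm. By Lemma \ref{maxleqmin}, every feasible flow satisfies $|f| \leq C(S,\bar S)$ for every cut; taking $S = \{s\}$ yields
$$
|f| \leq M := \sum_{(s,v) \in E} c(s,v),
$$
a finite nonnegative integer depending only on $(G,c)$. Since each iteration strictly increases $|f|$ by $A \geq 1$, the algorithm can perform at most $M$ iterations before no augmenting path exists, at which point Lemma \ref{nopath} guarantees the output is a maximal flow.

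There is no real obstacle: the argument is a routine induction combined with a boundedness observation. The only point that deserves explicit mention is the positivity $A \geq 1$, which depends on the convention that residual edges carry \emph{strictly} positive residual capacity, so the minimum over the augmenting path is taken over positive integers.
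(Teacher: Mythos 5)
Your proposal is correct and follows the same route as the paper's proof, which simply asserts that each augmentation increases the value by $A \geq 1$ and hence terminates; you supply the two justifications the paper leaves implicit, namely the induction showing the flow (hence every residual capacity, hence $A$) stays integral, and the explicit upper bound $|f| \leq C(\{s\}, V\setminus\{s\})$ that caps the number of iterations. No gaps; only a minor notational clash in reusing $k$ for both the iteration index and the augmenting-path length.
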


As corollaries we get

\begin{cor}\label{integer} If the capacities of a network are integers, then the value of the maximal flow is an integer and there exists a maximal flow with $f(u,v) \in \mathbb{Z} $ for every edge $(u,v)$.

\end{cor}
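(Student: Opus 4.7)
The plan is to prove integrality as an invariant maintained by every iteration of the Ford--Fulkerson algorithm, then invoke the previous lemma to conclude that the algorithm terminates at a maximum flow which must therefore be integer-valued.

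First I would initialize: start with the zero flow $f_0 \equiv 0$, which trivially satisfies $f_0(u,v) \in \mathbb{Z}$ for every pair of vertices. Next I would establish the inductive step. Suppose after $k$ iterations the current flow $f_k$ is integer-valued on every edge. Then for any pair $(u,v)$, the residual capacity
\[
c_{f_k}(u,v) = c(u,v) - f_k(u,v)
\]
is a difference of integers, hence an integer. If an augmenting path $(s=v_0, v_1, \ldots, v_\ell = t)$ exists in $G_{f_k}$, then the augmenting amount
\[
A = \min\{c_{f_k}(v_i, v_{i+1}) : i = 0, \ldots, \ell-1\}
\]
is a minimum of positive integers, hence a positive integer. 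The updates in the proof of Lemma \ref{nopath} replace $f_k(v_i,v_{i+1})$ by $f_k(v_i,v_{i+1}) + A$ and $f_k(v_{i+1},v_i)$ by $f_k(v_{i+1},v_i) - A$, while leaving all other values unchanged. Each updated value is an integer, so $f_{k+1}$ is integer-valued.

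Then I would conclude by combining this invariant with termination. By the previous lemma the algorithm terminates after finitely many steps at some flow $f^*$ for which no augmenting path exists, and by Lemma \ref{nopath} this $f^*$ is a maximum flow. By the invariant, $f^*(u,v) \in \mathbb{Z}$ for every edge. Finally, the value
\[
|f^*| = \sum_{e \in E} f^*(e)\,\phi(s,e)
\]
is a signed sum of integers, hence an integer. Since every maximum flow has the same value $|f^*|$, the value of \emph{the} maximum flow is an integer, and $f^*$ itself is an explicit integer-valued maximum flow.

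There is no real obstacle here; the argument is a straightforward induction. The only point requiring a moment of care is checking that the update formula in Lemma \ref{nopath} preserves integrality on both the forward edge $(v_i,v_{i+1})$ and the reverse pair $(v_{i+1},v_i)$, but this is immediate since both updates are shifts by the integer $\pm A$.
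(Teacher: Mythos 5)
Your proof is correct and follows the same route as the paper, which derives this corollary directly from the termination of Ford--Fulkerson with integer capacities: you have simply made explicit the integrality invariant (zero initial flow, integer residual capacities, integer augmenting amount $A$) that the paper leaves implicit. No issues.
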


\begin{cor} If the capacities of a network are rational numbers, then the algorithm terminates.

\end{cor}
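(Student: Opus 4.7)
The strategy I would follow is to reduce to the integer case handled by the previous lemma via a scaling argument. Given a network $(G,c)$ with rational capacities, write $c(u,v)=p_{uv}/q_{uv}$ with $p_{uv},q_{uv}\in\mathbb{Z}$ and $q_{uv}>0$ for each edge $(u,v)\in E$. Since $E$ is finite, the integer $N:=\prod_{(u,v)\in E}q_{uv}$ (or any common multiple of the $q_{uv}$) is well-defined, and the rescaled network $(G,c')$ with $c'(u,v):=N\,c(u,v)$ has integer capacities.

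Next I would show that running Ford--Fulkerson on $(G,c)$ starting from the zero flow is in lockstep with running it on $(G,c')$ starting from the zero flow, provided the same augmenting paths are selected at each iteration. Inductively, if $f_k$ is the flow on $(G,c)$ after $k$ augmenting steps and $f'_k$ the corresponding flow on $(G,c')$, then $f'_k=Nf_k$. The residual capacities then satisfy $c'_{f'_k}(u,v)=N\,c_{f_k}(u,v)$, so the residual graphs $G_{f_k}$ and $G'_{f'_k}$ share the same edge set. Thus the same augmenting paths are available in both, and the augmenting amount $A'$ at step $k+1$ on $(G,c')$ is exactly $N$ times the amount $A$ on $(G,c)$, giving $f'_{k+1}=Nf_{k+1}$.

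From this coupling, the two runs terminate after the same number of iterations. By the preceding lemma applied to $(G,c')$ (whose capacities are integers), that number is finite, so the algorithm on $(G,c)$ terminates as well.

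The only step requiring genuine care is the inductive verification that the $N$-scaling is preserved through an augmentation, since all other parts (finiteness of $N$, integrality of $c'$, availability of the same residual edges) are immediate. That verification is itself routine: it amounts to the observation that both the capacity function and the residual capacity function scale linearly in the rescaling, and that the bottleneck amount $A=\min_i c_f(v_i,v_{i+1})$ along a path is linear in the residual capacities, so it scales by $N$ in parallel with the flow values.
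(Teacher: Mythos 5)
Your proof is correct and matches the intended argument: the paper states this as an immediate corollary of the integer-capacity lemma, with the common-denominator rescaling being the implicit reduction. Your careful lockstep verification is sound (and could be shortened by noting directly that all residual capacities stay in $\frac{1}{N}\mathbb{Z}$, so each augmentation increases $|f|$ by at least $1/N$ while $|f|$ is bounded above by the capacity of the cut $(\{s\},V\setminus\{s\})$).
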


In fact there are examples of networks with irrational capacities such that the algorithm never terminates, moreover, the value of the flow in each step does not converge to the actual value of the maximal flow, so our algorithm must have as a condition that the capacity is at least a rational valued function. Then, the running time of the algorithm depends on the way the augmenting paths are chosen. There are many ways to find an augmenting path, like the shortest augmenting path or the largest bottleneck (value of $A$) augmenting path, that lead to a polynomial time algorithm.

\vspace{1cm}

\subsection{The Goldberg-Tarjan algorithm}\

The Goldberg-Tarjan algorithm \cite{Goldberg} is another polynomial time algorithm with a different approach to the problem of finding a maximal flow. Instead of increasing the flow along augmenting paths, it starts with a \emph{preflow}, which is a function on $V \times V$ which satisfies \emph{excess of flow} at each vertex, and then pushes \emph{excess flow} to edges closer to the sink. Next we formalize these concepts following Goldberg-Tarjan's article \cite{Goldberg}. \\

\begin{defi}\label{preflow} Given a network $(G,c)$ a preflow is a function $f: V\times V \rightarrow \R$ satisfying:
\begin{enumerate}[i)]
\item $f(u,v) \leq c(u,v)$
\item $f(u,v)=-f(v,u)$
\item for any vertex $x \neq s$, $ \sum_{v \in V} f(x,v) \geq 0$
\end{enumerate}  
\end{defi}

\begin{defi} \label{residual2} for a network $(G,c)$ and given a preflow $f$ on the network, we redefine the residual capacity of $(u,v) \in V \times V $ as $c_f(u,v)=c(u,v)-f(u,v)$. If $c_f(u,v) > 0 $ we call such pair a \emph{residual edge}. We define the residual graph as the directed graph having vertex set $V$ and edge set the set of residual edges.
\end{defi}

Note there are similarities with definition (\ref{residual}) but in this definition we are working with a preflow rather than a flow.

\begin{defi}\label{excess} the \emph{excess flow} at a vertex $x \in V$ is defined as $ \sum_{v \in V} f(x,v) \geq 0$.
\end{defi}

\begin{defi}\label{valid labeling} given a a \emph{valid labeling} on a network $(G,c)$ is a function $d: V \rightarrow \mathbb{Z}_{\geq0}\cup \{ \infty \} $ such that $d(s)=n$, $d(t)=0$ and $d(v) \leq d(w) +1 $ for every residual edge $(u,v)$.
\end{defi}

It can be shown that for any vertex $v$, if $d(v)<n$ then $d(v)$ is a lower bound on the distance from $v$ to $t$ in the residual graph and if $d(v) \geq n$ then $d(v)-n$ is a lower bound on the distance to $s$ in the residual graph \cite{Goldberg}. This labeling of the vertices permits the algorithm to push excess flow to vertices that are estimated to be closer to the sink and, if needed, to return flow to vertices estimated to be closer to the source.

\begin{defi}\label{active} a vertex $v$ is called active if $d(v) < \infty$ and $e(v) \geq 0 $. \end{defi}

Now we define the basic operations, push and relabel, that the main algorithm uses.

{\bf Push. } Let $(v,w)$ be such that $v$ is an active vertex, $c_f(u,v) > 0 $ and $d(v) = d(w)+1 $. Define $ \delta = \min(e(v), c_f(v,w)) > 0 $. Redefine $f(v,w):= f(v,w) + \delta $, $f(w,v):= f(w,v) - \delta $, $e(v):=e(v) - \delta$ and $e(w):= e(w)+\delta $.\\

{ \bf Relabel. }Let $v$ be an active vertex such that for any $ w \in V $, $c_f(v,w) > 0 \Rightarrow d(v) \leq d(w) $. Redefine $d(v):= \min \{ d(w)+1 \, : \, (v,w) \,  \mbox{residual edge} \} $.\\

As initial preflow we take the function $f$ such that for any $v \in V $, $f(s,v)=c(s,v) $ and zero everywhere else. It is readily checked that this is a preflow. As an initial labeling of the vertices we take $d(s)=n$ and zero everywhere else. As long as there is an active vertex $v$, either an operation of push or relabel is applicable to $v$. When there are no more active vertices the algorithm terminates, and the preflow becomes a flow, and in fact, it is maximal. Details of the proof of correctness and termination of the algorithm can be found in \cite{Goldberg}. We show only correctness assuming termination. 

\begin{lem} \label{reach} If $f$ is a preflow and $d$ is any valid labeling for $f$ then the sink $t$ is not reachable from $s$ in $G_f$.

\begin{proof} Suppose $(s=v_0, \ldots, v_k = t ) $ is a path from $s$ to $t $ in the residual graph. Clearly $k \leq n $. Now $ (v_i, v_{i+1} ) $ is a residual edge for every $i$. So by definition of valid labeling $d(v_i) \leq d(v_{i+1}+1) $ so we have $d(s) \leq d(v_1)+1 \leq \ldots \leq d(t) + k =k$ this contradicts the fact that $d(s)=n $. \\   
\end{proof}
\end{lem}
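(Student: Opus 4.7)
The plan is a direct proof by contradiction that exploits the telescoping of the valid-labeling inequality along a path. Assume, toward a contradiction, that $t$ is reachable from $s$ in the residual graph $G_f$. Then there exists a directed path from $s$ to $t$ in $G_f$, and I would immediately replace it by a shortest (hence simple) such path $s = v_0, v_1, \ldots, v_k = t$, so that all the $v_i$ are distinct. Since $G_f$ has only $n$ vertices, this forces $k \leq n-1$.

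Next, I would use the valid labeling hypothesis edge-by-edge. For each $i = 0, \ldots, k-1$, the pair $(v_i, v_{i+1})$ is a residual edge by construction of the path in $G_f$, so by Definition \ref{valid labeling} we have $d(v_i) \leq d(v_{i+1}) + 1$. Chaining these $k$ inequalities gives
$$
d(s) = d(v_0) \leq d(v_1) + 1 \leq d(v_2) + 2 \leq \cdots \leq d(v_k) + k = d(t) + k.
$$
Plugging in the boundary values $d(s) = n$ and $d(t) = 0$ that are required by the definition of a valid labeling, this becomes $n \leq k$, contradicting $k \leq n-1$.

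The argument is essentially one line of telescoping, so there is no real obstacle; the only subtlety worth being explicit about is the reduction to a simple path, which is what bounds $k$ by $n-1$ and hence turns the labeling inequality on $s$ into the desired contradiction with $d(s) = n$. One should also be careful that the convention for valid labelings is read correctly: the inequality $d(u) \leq d(v)+1$ is imposed on each residual edge $(u,v)$, which is exactly the direction used in the telescoping above.
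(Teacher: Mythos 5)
Your proof is correct and takes essentially the same route as the paper's: telescope the valid-labeling inequality $d(v_i)\leq d(v_{i+1})+1$ along a residual $s$--$t$ path and compare with $d(s)=n$, $d(t)=0$. In fact your version is slightly more careful than the paper's, which only asserts $k\leq n$ (yielding $n=d(s)\leq k\leq n$, not yet a contradiction); your explicit reduction to a simple (shortest) path gives the needed bound $k\leq n-1$ and closes that small gap.
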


Now recall lemma (\ref{nopath}). 

\begin{teo} If the algorithm terminates and $d$ is a valid labeling for $f$ with finite labels, then $f$ is a maximal flow.  
\begin{proof}
At the end of the algorithm there are no active vertices, as the labels are finite, it means that all vertices have zero excess, so $f $ is a flow. By lemma (\ref{reach}) and lemma (\ref{nopath}) this flow is in fact maximal. \\
\end{proof}
\end{teo}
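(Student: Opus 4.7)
The plan is to reduce to the two lemmas immediately preceding the theorem: Lemma \ref{reach} (which rules out any $s$-$t$ path in $G_f$ when $d$ is a valid labeling) and Lemma \ref{nopath} (which characterizes maximality of a flow by the absence of augmenting paths). The only real content of the proof is the bridging step: showing that, at termination, the preflow $f$ is actually a flow in the sense of Definition \ref{flow}. Everything else is a direct citation.

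First I would argue that at termination every vertex $v \neq s,t$ has zero excess. By definition \ref{active}, a vertex $v$ is active whenever $d(v) < \infty$ and $e(v) > 0$ (I would be careful here that the intended notion of ``active'' is strict positivity of excess, since the hypothesis is that the algorithm has stopped precisely because no pushes or relabels apply, i.e., no vertex is active). Since $d$ has finite labels by hypothesis, $d(v) < \infty$ for every $v$, so the only way a non-source/non-sink vertex can fail to be active is to have excess zero. Hence conservation of flow holds at every $v \neq s,t$, and combining this with properties (i) and (ii) of Definition \ref{preflow} shows that $f$ is a genuine flow.

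Next I would invoke Lemma \ref{reach}: since $f$ is a preflow (in particular still a preflow, now a flow) and $d$ is a valid labeling for $f$, the sink $t$ is not reachable from the source $s$ in the residual graph $G_f$. This says exactly that no augmenting path (in the sense of Definition \ref{aug}) exists. Lemma \ref{nopath} then forces $f$ to be maximal, finishing the proof.

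The main obstacle, if there is one, is the subtle point in the first step: one must be sure that the ``terminates'' hypothesis genuinely implies zero excess at each intermediate vertex, which relies on the fact that finiteness of $d$ makes every vertex with positive excess active and hence a valid target for either a push or a relabel operation. Once this reduction is spelled out, the rest is simply stringing together Lemmas \ref{reach} and \ref{nopath}.
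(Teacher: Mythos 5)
Your proposal is correct and follows essentially the same route as the paper: termination plus finiteness of the labels forces zero excess at every vertex other than $s$ and $t$, so the preflow is a flow, and then Lemma \ref{reach} combined with Lemma \ref{nopath} gives maximality. You merely spell out the bridging step in more detail (and rightly note that ``active'' in Definition \ref{active} must mean $e(v)>0$ rather than $e(v)\geq 0$, a typo in the paper), which the paper's one-line proof leaves implicit.
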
 

One important remark about this algorithm is the fact that it always works (it terminates and it is correct) no matter what type of capacity function we are dealing with. The Ford-Fulkerson fails to terminate in some cases where the capacity function is not rational. It is also important to note that the algorithm relies only on \emph{local} operations, that means the operations depend and modify only parameters related to a small part of the graph, this allows a parallel implementation of the algorithm that takes advantage of multicore processors. A special implementation of such algorithm terminates after $O(n^2m)$ steps.

\vspace{1cm}

\subsection{Hochbaum's pseudoflow}

Dorit Hochbaum's \emph{pseudoflow algorithm} \cite{Hochbaum} is an algorithm with a different approach to the maximum flow problem. Instead of directly finding a maximum flow, it first solves the \emph{maximum blocking cut} problem, then a maximum flow is recovered. Although the most complicated of the three, it is also the most efficient. We follow \cite{Hochbaum}:

\begin{defi} A \emph{pseudoflow} $f$ on a given network $(G,c) $ is a function $f: V \times V \rightarrow \R $ such that 

\begin{enumerate}[i)]
\item $ f(v,w)=-f(w,v)$, $\forall (v,w) \in V\times V $
\item $0\leq f(v,w) \leq c(v,w) $, $\forall (v,w) \in E $
\end{enumerate}

\end{defi}

The concept of pseudoflow drops the conservation of flow constraint, preserves the capacity constraint on the edges of the graph and the antisymmetry constraint on $ V \times V $. We define the residual capacity and residual graph in the same manner we did with flows and preflows. \\

\begin{defi}

For a directed, weighted, simple graph $G=(V,E)$ with weights $w_v$ for each $v \in V$ and arc capacities $c(v,w) $ for each $(v,w) \in V \times V$,  we will define $G_{st}$ as a directed graph with vertex set $V_{st}=V \cup \{s, t\} $,edge set $E_{st}=E\cup A(s) \cup A(t) $ where $A(s)= \{ (s,v) : w_v > 0 \} $ and $A(t)= \{ (v,t) : w_v < 0 \} $ and arc capacities $c(s,v)=w_v$, $c(v,t)=-w_v$ and the other arc capacities left unchanged. Starting from $G_{st} = (V_{st}, E_{st}) $ we define the extended network $G^{ext}$ as the graph obtained from $G_{st}$ by identifying $s,t$ as a single vertex $r$ and adding the edges $ (r,v),(v,r) $ for every $v \in V_{st} \backslash \{ s,t\} $.

\end{defi}

 We define the excess of flow at a vertex $e(v)$ as in definition ( \ref{excess}). \\

Now we consider a pseudoflow $f$ on $G_{st}$  and a rooted spanning tree with root $r$, $T$ of $G^{ext}$ such that

\begin{enumerate}[i)]
\item $f$ saturates all arcs in $A(s) \cup A(t)$

\item For every arc in $E \backslash T $, $f$ is either zero or saturates the arc. 

\item In every branch all downward residual capacities are strictly positive.

\item the direct children of $r$ are the only vertices that do not have zero excess.

\end{enumerate}

\begin{defi}\label{normalized} a spanning rooted tree with root $r$ of $G^{ext}$ that satisfies the previous conditions is called a \emph{normalized tree}. Note that this is an undirected graph.
 
A child $r_i$ of $r$ is classified as: 
\begin{enumerate}[i)]

\item Strong if $e(v) >0 $

\item Weak if $e(v) \leq 0 $

\end{enumerate}

A vertex $v$ is called weak or strong if it has a weak or strong ancestor, respectively.

\end{defi}

As we mentioned earlier, Hochbaum's algorithm solves first the maximum blocking cut problem, which we state next:\\

{ \bf Problem: } For a directed, weighted graph $G=(V,E)$ with vertex weights $w(v)$ for each vertex $v$, and arc capacity function $c(a,b)$ defined for every $(a,b) \in E$, find $S\subset V$ such that
$$
surplus(S) = \sum_{v \in S} w(v) - \sum_{ \substack{a \in S \\ b \in \bar S}} c(a,b)
$$
is maximum. Such a set is called a maximum surplus set and $(S, \bar S)$ is called a maximum blocking cut.
 \\

The key is to find the relation between a maximum blocking cut in $G$ and a minimum cut in $G_{st}$. Given by the following lemma:

\begin{lem}\label{mincutmaxblock} $\{s\} \cup S$ is the source set of a minimum cut in $G_{st}$ if and only if $(S, \bar S ) $ is a maximum blocking cut in $G$.
\end{lem}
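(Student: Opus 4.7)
The plan is a direct computation showing that the capacity of the cut $(\{s\}\cup S,\ \{t\}\cup \bar S)$ in $G_{st}$ differs from the surplus of $S$ in $G$ by an additive constant that does not depend on $S$. Once this is established, minimizing one quantity over choices of $S$ is the same as maximizing the other, which is precisely the claim.

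First, I would decompose the cut capacity in $G_{st}$ according to the three types of arcs crossing it. Since the source set is $\{s\}\cup S$ and the sink side is $\{t\}\cup \bar S$, the crossing arcs are: (a) arcs of $A(s)$ of the form $(s,v)$ with $v\in\bar S$ and $w(v)>0$, contributing $\sum_{v\in\bar S,\, w(v)>0} w(v)$; (b) arcs of $A(t)$ of the form $(v,t)$ with $v\in S$ and $w(v)<0$, contributing $\sum_{v\in S,\, w(v)<0} (-w(v))$; and (c) original arcs $(a,b)\in E$ with $a\in S$, $b\in\bar S$, contributing $\sum_{a\in S,\,b\in\bar S} c(a,b)$.

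Next, I would set $W^{+}:=\sum_{v\in V,\, w(v)>0} w(v)$, a constant depending only on $G$, and rewrite the sum in (a) as $W^{+}-\sum_{v\in S,\,w(v)>0} w(v)$. Adding (a) and (b) then yields
\[
W^{+}-\sum_{v\in S,\,w(v)>0} w(v) + \sum_{v\in S,\,w(v)<0}(-w(v)) \;=\; W^{+}-\sum_{v\in S} w(v),
\]
since vertices in $S$ with $w(v)=0$ contribute nothing. Combining with (c) gives
\[
\mathrm{cap}_{G_{st}}(\{s\}\cup S,\{t\}\cup \bar S) \;=\; W^{+}-\Bigl(\sum_{v\in S} w(v) - \sum_{a\in S,\,b\in\bar S} c(a,b)\Bigr) \;=\; W^{+}-\mathrm{surplus}(S).
\]

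Finally, I would conclude: because $W^{+}$ is independent of $S$, $S$ maximizes $\mathrm{surplus}(S)$ over all subsets of $V$ if and only if $\{s\}\cup S$ minimizes $\mathrm{cap}_{G_{st}}$ over all source sets of $s$-$t$ cuts in $G_{st}$. This is exactly the stated equivalence. The only mild subtlety, which I would be careful to check, is that the three contributing sums together cover every arc crossing the cut without double counting; in particular, $A(s)$ contains only arcs to positive-weight vertices and $A(t)$ only arcs from negative-weight vertices, so the split by sign of $w(v)$ used above is legitimate and the identification of $(a)+(b)$ with $W^{+}-\sum_{v\in S}w(v)$ goes through.
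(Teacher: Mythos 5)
Your argument is correct. Note that the paper itself supplies no proof of this lemma; it simply defers to Hochbaum and Radzik. Your computation is the standard one underlying those references: you correctly identify the three families of arcs crossing the cut $(\{s\}\cup S,\ \{t\}\cup\bar S)$, and the key identity $\mathrm{cap}(\{s\}\cup S) = W^{+}-\mathrm{surplus}(S)$ with $W^{+}$ independent of $S$ immediately converts minimization of cut capacity into maximization of surplus. The bookkeeping is sound: since $A(s)$ only reaches positive-weight vertices and $A(t)$ only leaves negative-weight vertices, the two weight sums over $S$ combine to $\sum_{v\in S}w(v)$ exactly, and every $s$--$t$ cut of $G_{st}$ has source set of the form $\{s\}\cup S$, so the correspondence is a genuine bijection. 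This is a complete proof of a statement the paper leaves as a citation.
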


This is proven in \cite{Hochbaum} following an article by Radzik \cite{Radzik}. The following lemma, also found on the article \cite{Hochbaum}, is fundamental for the correctness of the algorithm.

\begin{lem}\label{optimal} For a normalized tree $T$ and pseudoflow $f$ on $G_{st}$ saturating $A(s)$ and $A(t)$ and a set of strong vertices $S$, if the residual capacity of any edge $(a,b)$ with $a \in S $ and $b \in \bar S $, $c_f(a,b)=0$ is zero then $S$ is a maximum surplus set and $(S,\bar S)$ is a maximum blocking cut.

\end{lem}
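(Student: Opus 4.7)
My plan is to apply Lemma~\ref{mincutmaxblock} and reduce the claim to showing that $(\{s\}\cup S,\bar S\cup\{t\})$ is a minimum cut of $G_{st}$. Splitting the forward arcs of this cut into those from $s$ into $\bar S$ (contributing $w_v$ each), those from $S$ into $t$ (contributing $-w_v$ each), and those from $S$ to $\bar S$ in $E$, a direct computation gives
$$C\;=\;\sum_{v:\,w_v>0}w_v\;-\;\mathrm{surplus}(S).$$
By max-flow min-cut (Theorem~\ref{maxflowmincut}) combined with weak duality (Theorem~\ref{weakduality}), it then suffices to exhibit a flow on $G_{st}$ of value $C$.

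I would first record two consequences of the hypotheses. Every forward arc across the cut is saturated by $f$: the $A(s)\cup A(t)$-arcs by condition~(i) of the normalized tree, and every $(a,b)\in E$ with $a\in S$, $b\in\bar S$ by the assumption $c_f(a,b)=0$. Every backward arc $(b,a)\in E$ with $b\in\bar S$, $a\in S$ carries zero pseudoflow: applying the hypothesis with the extended convention $c(a,b)=0$ whenever $(a,b)\notin E$, and combining with antisymmetry and the nonnegativity condition~(ii) of pseudoflow, one forces $f(b,a)=0$. Substituting these facts into $E_S:=\sum_{v\in S}e(v)$ and using the saturation of $A(s)\cup A(t)$ then yields
$$E_S\;=\;\sum_{v\in S}w_v\;-\sum_{\substack{(a,b)\in E\\ a\in S,\,b\in\bar S}}c(a,b)\;=\;\mathrm{surplus}(S).$$

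To build the desired flow I would use conditions~(iii) and~(iv) of the normalized tree: excess is concentrated at the direct children of $r$, and every branch has strictly positive downward residual capacities along its tree edges. Therefore the excess $e(r_i)>0$ at each strong child can be pushed back along its branch to the tree arc joining $r_i$ to $r$, and symmetrically each weak child's deficit can be supplied from $r$; this converts $f$ into a bona fide flow $\tilde f$ on $G_{st}$ of value $\sum_{v:\,w_v>0}w_v - E_S = C$. The main technical obstacle is precisely this last step: one must verify, by a case analysis on the sign of $w_{r_i}$ and on which $G^{ext}$-arc joins $r_i$ to $r$ in $T$, that the tree-pushes preserve all capacity and nonnegativity constraints --- which is exactly what condition~(iii) is designed to guarantee.
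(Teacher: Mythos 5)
The paper itself offers no proof of this lemma (it defers to \cite{Hochbaum}), so I am judging your argument on its own terms. The reduction is sound as far as it goes: invoking Lemma~\ref{mincutmaxblock}, computing $C(\{s\}\cup S,\bar S\cup\{t\})=\sum_{v:w_v>0}w_v-\mathrm{surplus}(S)$, noting that the hypothesis saturates every forward arc of the cut and zeroes the pseudoflow on every backward arc, and deducing $\sum_{v\in S}e(v)=\mathrm{surplus}(S)$ are all correct. The genuine gap is exactly where you flag it, and conditions~(iii) and~(iv) of the normalized tree do not close it. Condition~(iii) lets you push flow \emph{downward} inside a branch (that is what the merger step of the algorithm exploits); it says nothing about returning the excess $e(r_i)$ of a strong root to $s$. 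Worse, the tree arc joining $r_i$ to $r$ may be one of the artificial arcs of $G^{ext}$, which does not exist in $G_{st}$ at all, and even when $(s,r_i)\in A(s)$ that arc carries only $w_{r_i}$ units of flow, which can be strictly smaller than $e(r_i)$, so ``pushing the excess back to that arc'' is not a legal operation on $G_{st}$. Finally, ``supplying each weak child's deficit from $r$'' is wrong on the source side: if any deficit were supplied from $s$, the resulting flow would have value strictly greater than $M^+-E_S=C$, contradicting weak duality; deficits must instead be cancelled by reducing flow on paths into $t$.

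The correct way to finish your construction is the one the paper itself uses later when recovering a maximum flow from an optimal tree: under the hypothesis there are no residual arcs from $S$ to $\bar S$ and $t$ is not residually reachable from $S$, so Lemma~\ref{strong path} gives every strictly strong node a residual path to $s$ passing only through strong nodes; repeatedly cancelling excess along such paths returns all of $E_S$ to $s$, and since weak-to-strong arcs carry no flow, the deficits decompose onto paths terminating at $t$ and are disposed of there, yielding a feasible flow of value exactly $C$. You should also be aware that there is a much shorter proof that needs no flow construction and no appeal to Lemma~\ref{mincutmaxblock}: for any $W\subset V$, antisymmetry and the saturation of $A(s)\cup A(t)$ give $\sum_{v\in W}e(v)=\sum_{v\in W}w_v-f(W,\bar W)\ge \mathrm{surplus}(W)$, with equality precisely when there is no residual arc from $W$ to $\bar W$; since in a normalized tree only the strong roots carry positive excess, $\sum_{v\in W}e(v)\le\sum_{v\in S}e(v)$ for every $W$, and the hypothesis makes the first inequality an equality for $W=S$, so $\mathrm{surplus}(W)\le\sum_{v\in S}e(v)=\mathrm{surplus}(S)$ for all $W$.
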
 For a normalized tree $T$ if the set of strong vertices $S$ satisfies the condition in lemma (\ref{optimal}) the tree is called \emph{optimal}\\

The algorithm starts with a normalized tree related to a pseudoflow $f$ on $G_{st}$. There are multiple choices of such a tree. We will start with a \emph{simple normalized tree}. It corresponds to a pseudoflow $f$ saturating $A(s)$ and $A(t)$ on $G_{st}$. In this normalized tree every vertex in $V$ forms an independent branch. The set of strong vertices are those adjacent to the source. \\

By lemma (\ref{optimal}), it is desirable to reduce the residual capacity from strong to weak vertices, therefore, with each iteration of the algorithm, a residual edge from $S$ to $\bar S$ is chosen, this is called a \emph{merger arc(edge)}. If such an edge does not exist then the tree is optimal and the set of strong vertices form a maximum blocking cut. If there is one, then such edge becomes a new edge of the tree and the edge joining the root of the strong branch to $r$ is removed from the tree. Then the excess of the root of the strong branch is pushed upwards until it reaches the root of the weak branch. Note that this path is unique.\\

It is not always possible to push the total of the excess along an edge.If there is an edge, say $(a,b)$ that does not have enough residual capacity to push the excess then such edge is removed (split) from the tree, $a$ (the tail of the edge) becomes the root of a new strong branch with excess equal to the excess pushed minus the residual capacity of the edge. This is done in such a way so that the property that only roots of branches may have nonzero excess is maintained through the running of the algorithm. The remaining excess at $b$ continues to be pushed in the same fashion until it reaches the root of the weak branch or until it reaches another edge that does not have enough residual capacity and the process is repeated. This process assures that the tree is normal at the end of each iteration. \\

Termination of the algorithm follows from the next lemma:\\

\begin{lem} At each iteration of the algorithm either the total excess of the strong vertices is strictly reduced or the number of weak vertices is reduced.

\begin{proof}
Recall that from the properties of definition (\ref{normalized}) we have that all downward residual capacities of edges are positive. After appending a merger edge to the tree and removing the edge joining the root of the strong branch $r_s$ to $r$, the path from $r_s$ to the weak branch becomes an \emph{upward} path with positive residual capacity at each edge of the path, then some positive amount of excess arrives at the weak branch that is being merged.  Then either some positive amount of excess arrives at the root of the weak branch and the total excess is strictly reduced, or there is some edge in the weak branch without enough residual capacity. In this case the edge is split and the tail of such edge becomes a strong vertex. Note that if some weak vertex becomes strong in this fashion, then all of its children, including the former strong branch, becomes strong. Then if such operation takes place, the number of weak vertices is strictly reduced.\\
\end{proof}
\end{lem}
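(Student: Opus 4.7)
The plan is to track two quantities through one iteration: the total strong excess $\Sigma$ (the sum of excesses over all strong roots) and the number $W$ of weak vertices. Let $E = e(r_s) > 0$ be the excess at the strong root being emptied. In the tree obtained by appending the merger edge $(a_0,b_0)$ and deleting the edge joining $r_s$ to $r$, the unique path along which we push decomposes into three segments: the old strong branch traversed from $r_s$ up to $a_0$, the merger edge $(a_0,b_0)$, and the subpath from $b_0$ up to $r_w$ inside the old weak branch.

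By condition (iii) of a normalized tree, each edge of the first segment is a downward edge of the old strong branch traversed in its downward (old-tree) orientation, so it has strictly positive residual capacity in the direction we push; the merger edge is residual by choice. Only the third segment may contain edges of vanishing residual capacity in the pushing direction. Writing $\delta\geq 0$ for the excess that eventually reaches $r_w$ and $\epsilon_1,\dots,\epsilon_k>0$ for the excesses left behind at the split vertices $a_1,\dots,a_k$ (each becoming a new strong root), conservation of flow along the path yields $E = \delta + \sum_i \epsilon_i$. A direct bookkeeping then gives $\Delta\Sigma = -\delta$ when $r_w$ stays weak and $\Delta\Sigma = e(r_w)\leq 0$ when $r_w$ becomes strong.

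The cases where $\delta>0$ with $r_w$ still weak, or where $r_w$ turns strong, are immediate. In the first, $\Sigma$ strictly drops by $\delta$. In the second, every vertex of the main merged branch and of every split-off branch is strong by construction, so every previously weak vertex is now strong and $W$ strictly decreases by at least one.

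The delicate case, and the one I expect to be the main obstacle, is $\delta=0$, for then $\Delta\Sigma = 0$ and the argument has to come from the weak count. Here the final split must saturate an edge of zero residual capacity in the pushing direction, which by the preceding analysis forces this edge to lie on the weak-branch segment; in particular, its tail $a_k$ is an old weak vertex. The crucial topological observation is that $b_0$ is the lowest vertex of the weak-branch segment, so $a_0$ and the entire old strong branch hang below $b_0$ in the merged tree, hence below every split vertex on that segment. Consequently every old strong vertex is peeled off into some split-off branch (and remains strong), while $r_w$'s excess is unchanged, so $r_w$ stays weak and no old strong vertex gets reclassified as weak in the main branch. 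Since $a_k$ itself has migrated from weak to strong, $W$ strictly decreases, completing the case analysis.
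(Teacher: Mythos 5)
Your proof is correct and follows essentially the same route as the paper's: positive residual capacity along the old strong branch (condition (iii) of a normalized tree) guarantees a positive amount of excess reaches $b_0$, and any split on the weak segment peels the entire old strong branch off into a new strong branch while turning at least one formerly weak vertex strong. Your version is in fact more careful than the paper's terse argument, since you explicitly handle the boundary case where $r_w$ turns strong with zero prior excess (so the total strong excess is unchanged and one must fall back on the weak count) and isolate the $\delta=0$ case, both of which the paper glosses over.
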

Now let $M^+=C(\{s\}, V )$ be the sum of capacities in $A(s)$ and $M^-$ be the sum of capacities in $A(t)$ then by the final comment in the previous lemma we see that any iteration that reduces the total excess is separated from another iteration of such type by at most $n$ iterations. Then it follows immediately for integer capacities that

\begin{cor}
The complexity of the algorithm is $O(nM^+)$
\end{cor}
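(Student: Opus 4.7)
The plan is to classify the iterations using the dichotomy provided by the previous lemma and then bound each class. Call an iteration of \emph{type E} if it strictly reduces the total excess of strong vertices, and of \emph{type W} if it strictly reduces the number of weak vertices. The lemma guarantees every iteration falls into at least one of these classes, so it suffices to bound the number of iterations of each type separately.

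First I would bound the type E iterations. Define the potential $E = \sum_{v \text{ strong}} e(v)$. Since the simple normalized tree initially assigns excess equal to capacities of arcs in $A(s)$, we have $E \leq M^+$ at the start, and $E \geq 0$ always (strong vertices have nonnegative excess by definition). Under integer capacities the excess stays integer throughout, so each type E iteration decreases $E$ by at least $1$; this gives at most $M^+$ iterations of type E.

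Next I would bound type W iterations by interleaving them with type E iterations. Let $W$ be the number of weak vertices; clearly $0 \leq W \leq n$. A type W iteration, by the lemma, strictly decreases $W$. The only way $W$ can increase is during a type E iteration, where a strong branch gets grafted onto a weak branch (thereby turning its vertices weak). Hence, between two consecutive type E iterations, $W$ is monotonically nonincreasing, so at most $n$ type W iterations can occur in any such interval (and similarly at most $n$ before the first type E iteration or after the last). Combining, the total number of iterations is at most $(M^+ + 1)(n+1) = O(nM^+)$.

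Finally, one checks that the per-iteration work is bounded by a constant amount of amortized work along the tree path chosen (merging an arc, splitting at most one arc per iteration, and pushing excess one step at a time), so the iteration count dominates and the overall complexity is $O(nM^+)$. The main subtlety here is the interleaving argument: one must verify that $W$ is genuinely not increased during type W iterations and that its only source of growth is the merger step in a type E iteration, which follows directly from the description of the push-and-split mechanics given before the lemma.
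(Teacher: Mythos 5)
Your proposal is correct and follows essentially the same route as the paper: it invokes the dichotomy of the preceding lemma, bounds the excess-reducing iterations by $M^+$ via integrality of the capacities, and bounds the weak-vertex-reducing iterations between consecutive excess reductions by $n$. The paper states this very tersely (``any iteration that reduces the total excess is separated from another iteration of such type by at most $n$ iterations''), and your write-up merely makes the interleaving and potential-function bookkeeping explicit.
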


Now as the problem is symmetrical on $s$ and $t$ we find that by reversing all directions of the edges of the graph and interchanging $s$ and $t$ we get an equivalent problem so it follows again that for integer capacities

\begin{cor}
The complexity of the algorithm is $O(n * \min \{M^+, M^- \} ) $
\end{cor}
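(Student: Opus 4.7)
The plan is to exploit the evident symmetry of the maximum flow problem under reversal of arcs and swapping of source and sink, and then combine with the previous corollary.

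First I would spell out the symmetry claim precisely. Given the network $(G,c)$ with designated $s$ and $t$, form the \emph{reversed} network $(G^{\mathrm{op}}, c^{\mathrm{op}})$ whose vertex set is the same but whose arcs $(u,v)$ are replaced by $(v,u)$, with capacities $c^{\mathrm{op}}(v,u) := c(u,v)$, and with the roles of source and sink interchanged. There is a natural bijection between flows on $(G,c)$ and flows on $(G^{\mathrm{op}}, c^{\mathrm{op}})$ given by $f \mapsto f^{\mathrm{op}}$ with $f^{\mathrm{op}}(v,u) := f(u,v)$; this bijection preserves capacity constraints, conservation of flow at interior vertices, and the net flow value. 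Hence the maximum flow value of $(G^{\mathrm{op}}, c^{\mathrm{op}})$ equals that of $(G,c)$, and the associated constructs $G^{\mathrm{op}}_{st}$ and $(G^{\mathrm{op}})^{\mathrm{ext}}$ are obtained from $G_{st}$ and $G^{\mathrm{ext}}$ by reversing all arcs and relabeling.

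Next I would observe the effect on the two quantities $M^+$ and $M^-$. In the original network, $M^+ = \sum_{v\in A(s)} c(s,v)$ and $M^- = \sum_{v\in A(t)} c(v,t)$. After reversal, the roles of $A(s)$ and $A(t)$ are swapped, so the reversed network's $M^+_{\mathrm{op}}$ equals the original $M^-$ and vice versa. Applying the previous corollary directly to the reversed network yields an algorithm computing a maximum flow (equivalently, a maximum blocking cut, via Lemma \ref{mincutmaxblock}) in time $O(nM^+_{\mathrm{op}}) = O(nM^-)$.

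Finally, since we may choose in advance which orientation to feed the algorithm, we run the faster of the two executions, yielding an overall complexity of $O(n\min\{M^+, M^-\})$ as claimed. I do not anticipate any serious obstacle: the main content is checking that the symmetry faithfully transports all the objects used by the algorithm (pseudoflows, normalized trees, strong/weak classification) so that the previous corollary's bound transfers verbatim to the reversed instance.
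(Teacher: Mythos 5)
Your proposal is correct and follows essentially the same route as the paper: the paper likewise derives this corollary by reversing all arc directions, interchanging $s$ and $t$ to obtain an equivalent problem in which $M^+$ and $M^-$ swap roles, and then invoking the preceding $O(nM^+)$ bound. Your write-up merely makes explicit the bijection of flows and the transport of the algorithm's auxiliary structures, which the paper leaves implicit.
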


Correctness of the algorithm follows from lemma (\ref{optimal}) as at the end of the algorithm there are no merger arcs left.\\

Now in order to solve our initial problem we have to recover a maximum flow from the pseudoflow and maximum blocking cut obtained after the algorithm terminates, as it is not guaranteed that the pseudoflow becomes a flow after termination. In what follows we describe how to recover such maximum flow. 

\begin{defi} An $s-t$ path-flow on a network $(G,c)$ is a flow $f$ on $(G,c)$ such that the edges carrying a strictly positive amount of flow form an $s-t$ path on $(G,c)$. A cycle-flow on $(G,c)$ is a flow on $(G,c)$ such that the edges carrying a strictly positive amount of flow form a directed cycle on $(G,c)$.
\end{defi}

\begin{teo}\label{flowdecomp} {\bf (Flow decomposition)} Let $f$ be a flow on $(G,c)$, then $f$ can be decomposed as the sum of at most $m$ $s-t$ path-flows and cycle-flows. 
\begin{proof}
Suppose $f$ is such that $|f| > 0$ then there is some $(s,v_1) \in E$ such that $f(s,v_1)>0$. If $v_1=t$ we have a directed $s-t$ path and we define a flow $f_0$ carrying an amount of flow $f(s,v_1)$ on such a path and zero everywhere else. If $v_1 \neq t$ then there exists some edge $(v_1,v_2)$ with some positive amount of flow as a result of conservation of flow. In this way we construct an $s-t$ path (we may suppose it has no loops) and we define the flow $f_0$ as carrying an amount of flow equal to the minimum of the flow over the edges of this path and zero everywhere else, it is readily checked that this is a feasible flow. $f'=f-f_0$ is again a feasible flow where $f(u,v)>0$ and $f'(u,v)=0$. Using the same argument for $f'$ we arrive at a flow $f^*=f-f_0-\ldots f_k$ with zero net flow. If $f^*$ is not the zero flow, then analogously to the previous argument we may construct a cycle on the graph and define a cycle-flow as the minimum over the flow of the edges on the cycle and zero everywhere else, this is a feasible flow $f_{k+1}$ and $f^*-f_{k+1} $ has some new edge with zero flow. We continue in such fashion and arrive at $f-f_0- \ldots f_h=0$ so $f=f_0+\ldots f_h$ where $f_i$ is either a path-flow or a cycle-flow. $h\leq m$ as at least the flow on one edge becomes zero in each step.\\ 
\end{proof}

\end{teo}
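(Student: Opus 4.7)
The plan is to proceed by induction on the number of edges on which $f$ is strictly positive, peeling off one path-flow or cycle-flow per iteration. First, while $|f|>0$, I would construct an $s$-$t$ walk all of whose edges carry positive flow: start at $s$, pick an outgoing edge $(s,v_1)$ with $f(s,v_1)>0$ (which exists since $|f|>0$), and at every intermediate vertex $v_i\neq t$ invoke conservation of flow at $v_i$, together with $f\geq 0$ and the positive contribution arriving along $(v_{i-1},v_i)$, to produce some outgoing edge $(v_i,v_{i+1})$ with $f(v_i,v_{i+1})>0$. Since $V$ is finite, this walk either reaches $t$ or revisits a vertex; in the latter event I would first excise the resulting directed cycle as a cycle-flow and then resume. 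The end result is a simple $s$-$t$ path $P$ every edge of which carries positive flow.

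Next I would set $\alpha:=\min_{e\in P}f(e)>0$ and define $f_0$ to equal $\alpha$ on each edge of $P$ and $0$ elsewhere. Then $f_0$ is an $s$-$t$ path-flow, while $f-f_0$ remains non-negative by choice of $\alpha$ and still satisfies conservation at every internal vertex of $P$ (the $+\alpha$ and $-\alpha$ contributions cancel) as well as trivially at vertices not on $P$. By construction at least one edge of $P$, the bottleneck edge, now carries zero flow, so the support of $f$ has strictly shrunk.

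Once $|f|=0$ but $f$ is not identically zero, I would argue similarly: pick any edge $(u,v)$ with $f(u,v)>0$ and extend forward using conservation at each visited vertex. Such a vertex cannot be $s$ (no incoming edges) or $t$ (no outgoing edges), so conservation forces a continuing positive-flow edge, and finiteness of $V$ forces a vertex to repeat, giving a directed cycle $C$. I would subtract the cycle-flow of value $\min_{e\in C}f(e)$ along $C$, again zeroing at least one edge and preserving conservation and non-negativity everywhere. Iterating the two procedures, after at most $m$ rounds the support of the residual flow is empty, yielding $f=f_0+f_1+\cdots+f_h$ with $h\leq m$ and each summand a path-flow or cycle-flow.

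The main obstacle is the walk-extension step: one must be confident that conservation of flow, combined with the structural hypothesis that $s$ has no incoming edges and $t$ has no outgoing edges, really does guarantee a positive-flow continuation at every intermediate vertex, and one must cleanly handle the possibility that the walk loops back before reaching $t$ by extracting the interior cycle on the fly rather than allowing a path-with-tail. The edge-count bound $h\leq m$ is then immediate from the fact that each subtracted summand saturates its bottleneck and removes at least one edge from the support of the running residual.
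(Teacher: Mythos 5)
Your proposal is correct and follows essentially the same route as the paper: repeatedly trace a positive-flow $s$-$t$ path (using conservation of flow to extend the walk) and subtract its bottleneck value, then do the same with cycles once the net flow is zero, zeroing at least one edge per step so that at most $m$ summands arise. You are in fact slightly more careful than the paper at the one delicate point, namely excising interior cycles when the walk revisits a vertex, which the paper dismisses with ``we may suppose it has no loops.''
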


\begin{lem}\label{strong path}(see \cite{Hochbaum}) For any strictly strong node there exists a residual path either to the source or to some strictly weak node.

\end{lem}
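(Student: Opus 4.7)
The plan is to argue by contradiction using a cut-style flow-balance argument applied to the pseudoflow $f$ on $G_{st}$. Let $R\subseteq V_{st}$ be the set of vertices reachable from $v$ along residual arcs of $G_{st}$, and suppose, toward a contradiction, that $s\notin R$ and that $R$ contains no strictly weak vertex. I would first observe that $t\notin R$ as well: every arc of $A(t)$ is saturated by $f$, so no forward residual arc enters $t$, and $t$ has no outgoing arcs in $E_{st}$ to provide a backward residual edge either. Hence $R\subseteq V$, and $(R,\,V_{st}\setminus R)$ is a genuine cut separating $v$ from both $s$ and $t$.

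Next I would derive the cut inequality
$$
\sum_{u\in R} e(u)\;=\;\sum_{u\in R,\; w\in V_{st}\setminus R}f(w,u)\;\leq\;0.
$$
The equality is the standard antisymmetry computation: contributions of pairs in $R\times R$ cancel. For the inequality, since no residual edge leaves $R$, every $(u,w)\in E_{st}$ with $u\in R$ and $w\notin R$ is saturated, giving $f(u,w)=c(u,w)\geq 0$ and hence $f(w,u)\leq 0$ by antisymmetry; in the remaining cases the antisymmetric extension forces $f(w,u)=0$, and summing yields the displayed bound.

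Finally I would contradict this inequality using the normalized tree hypothesis. By condition (iv) of definition~\ref{normalized}, only direct children of $r$ can carry nonzero excess; among these, strong children satisfy $e>0$ and weak children satisfy $e\leq 0$. The assumption that no strictly weak vertex (i.e.\ no vertex with $e<0$) lies in $R$ forces every weak root in $R$ to have $e=0$, so every vertex of $R$ has $e\geq 0$. Since $v$ is strictly strong, $e(v)>0$, and therefore $\sum_{u\in R}e(u)\geq e(v)>0$, contradicting the cut inequality; thus $R$ must contain $s$ or a strictly weak vertex, providing the desired residual path.

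The main obstacle is the bookkeeping with the pseudoflow on the extended network: one needs to keep the sign convention of the antisymmetric extension straight, verify that $A(s)$ and $A(t)$ stay saturated throughout the algorithm so that $t$ really is unreachable in the residual graph of $G_{st}$, and commit to the reading of \emph{strictly strong} and \emph{strictly weak} as $e(v)>0$ and $e(v)<0$ respectively, so that every vertex of $R$ contributes with the correct sign to the final estimate.
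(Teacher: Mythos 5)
The paper itself gives no proof of this lemma; it is stated with a bare citation to Hochbaum's article, so there is nothing internal to compare your argument against. Your proof is correct and is essentially the standard argument from the literature: take $R$ to be the residual-reachability set of the strictly strong node $v$, note $t\notin R$ because $A(t)$ is saturated and $t$ has no outgoing arcs, and then play the flow balance $\sum_{u\in R}e(u)=\sum_{u\in R,\,w\notin R}f(w,u)\leq 0$ (internal terms cancel by antisymmetry, and the absence of residual arcs leaving $R$ forces $\bar f(u,w)\geq \bar c(u,w)\geq 0$ for $u\in R$, $w\notin R$) against the positivity $\sum_{u\in R}e(u)\geq e(v)>0$ that follows when $R$ avoids $s$ and all deficit vertices. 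Two small remarks. First, you implicitly take the excess at $u$ to be $\sum_{w}f(w,u)$ (net flow \emph{into} $u$), which is the convention your identity requires and the one Hochbaum intends; the paper's Definition of excess is written with the opposite orientation, so you are right to flag and fix the convention rather than follow it literally. Second, once you commit to reading \emph{strictly weak} as $e(u)<0$, the appeal to condition (iv) of the normalized tree is not actually needed for the final estimate: ``no strictly weak vertex in $R$'' already gives $e(u)\geq 0$ for all $u\in R$ by definition, so that step of your argument is harmless but redundant. The proof stands as a self-contained justification of a statement the paper leaves to the reference.
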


In order to use the flow decomposition theorem first we have to consider a network $(G',c')$ related to $(G,c)$ such that the preflow $f$ becomes a feasible flow. This is done by considering a super source $\bar s$ and supersink $\bar t$, adding edges $(\bar s, s) \cup \{ (\bar s,v): \mbox{ v is strictly weak} \} $ with flow and capacity equal to the deficits on such vertices, and edges $(t, \bar t) \cup  \{ (v, \bar t): \mbox{ v is strong} \} $ with capacity and flow equal to the excesses on such vertices. The flow on any other edge has the same value as the preflow. This function is now a feasible flow on the network with source $\bar s$ and sink $ \bar t$. \\

To get a feasible flow on the original network, we have to get rid of excesses at strong nodes and deficits at strictly weak nodes. For any strong vertex $v_s$, as long as $f(v_s, \bar t) >0 $ we have that $(\bar t, v_s) \in E_f$ is part of the residual network. Hence by lemma (\ref{strong path}) we have a residual path from $\bar t$ to $\bar s$ that contains the edge $(\bar t, v_s)$. Increasing the flow on such path by an amount of $\delta$ equal to the minimum over the residual capacities of the path, actually decreases the excess of $v_s$ by the same amount. After one such step, either the vertex $v_s$ arrives at zero excess or this process can be repeated by lemma (\ref{strong path}). This is a process analogous to flow decomposition on the reversed graph. After termination there are no vertices other than $t$ with positive excess.\\

In the same fashion, the remaining flow is decomposed until positive deficits at strictly weak vertices are disposed. This must be done via $t$ as it is the only vertex sending a positive amount of flow to $\bar t$. After termination all vertices except $s$ and $t$ have nonzero excess. Deleting $\bar s, \bar t$ from the graph leaves us with a feasible flow on $G_{st}$.

\begin{cor} A maximum flow can be recovered from an optimal normalized tree with pseudoflow f.
\begin{proof}
For an optimal tree we have $C_f(S, \bar S)=0$ that is, there are no residual edges directed from strong to weak vertices. Hence, following the previous argument, excesses at strong vertices can be disposed using only paths traversing strong nodes. Now there are no edges directed from a weak to a strong vertex with positive flow, as otherwise the reverse edge would have residual capacity greater than zero, a contradiction. So by the proof of theorem (\ref{flowdecomp}) the remaining deficits at weak vertices are disposed using only paths traversing weak vertices. It then follows that $C_f(S, \bar S)=0$ after recovering a flow $f$ so that $c(v,w)=f(v,w) $ for $v \in S, w \in \bar S$ and as consequence $|f|= C(S, \bar S) $. By lemma (\ref{mincutmaxblock}) and lemma (\ref{optimal}) $(S, \bar S) $ is a minimum cut. This shows $|f|$ is maximum.
 
\end{proof}

\end{cor}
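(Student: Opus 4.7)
The plan is to turn the pseudoflow $f$ into a genuine $s$--$t$ flow on $(G,c)$ whose value equals $C(S,\bar S)$, and then appeal to the identification between a maximum blocking cut in $G$ and a minimum cut in $G_{st}$ to conclude that this flow is maximum. The whole argument is driven by the optimality hypothesis $c_f(a,b)=0$ for every $a\in S$, $b\in \bar S$, which guarantees that the cut $(S,\bar S)$ acts as an impenetrable wall in the residual graph.

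First I would carry out the super source/super sink construction described immediately before the statement. Attaching $\bar s$ to every strictly weak vertex (with capacity equal to its deficit) and $\bar t$ to every strong vertex (with capacity equal to its excess), and taking $f$ to saturate every one of these artificial arcs, promotes $f$ to a feasible flow on an augmented network $(G',c')$. Flow decomposition (Theorem \ref{flowdecomp}) then represents this flow as a sum of $\bar s$--$\bar t$ path-flows plus cycle flows.

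Next I would discharge the artificial arcs using only paths that do not cross $(S,\bar S)$. For each strong vertex $v_s$ with $f(v_s,\bar t)>0$, Lemma \ref{strong path} produces a residual path from $v_s$ either to $\bar s$ or to a strictly weak vertex; the second option is ruled out by the optimality hypothesis (such a path would have to use a residual edge from $S$ to $\bar S$), so the path stays inside $S$ and closes up through $\bar s$. Cancelling the corresponding $\bar s$--$v_s$--$\bar t$ loop against the artificial arcs reduces the excess of $v_s$ without altering $f(a,b)$ for $a\in S$, $b\in\bar S$; iterating empties all artificial arcs attached to strong vertices. A symmetric argument — essentially running the same procedure on the reversed network, where weak vertices play the role of strong ones — handles the deficits at strictly weak vertices, routing them through $t$ entirely within $\bar S$. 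Removing $\bar s$ and $\bar t$ from what remains yields a genuine feasible flow $\tilde f$ on $(G,c)$.

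Because none of the cancellations touched edges between $S$ and $\bar S$, the saturation condition $\tilde f(a,b)=c(a,b)$ for $a\in S$, $b\in\bar S$ (and $\tilde f(b,a)=0$ in the reverse direction) is preserved, so Lemma \ref{value off} and Remark \ref{remark} give $|\tilde f|=C(S,\bar S)$. Finally, Lemma \ref{mincutmaxblock} identifies $\{s\}\cup S$ as the source set of a minimum cut in $G_{st}$, and Theorem \ref{maxflowmincut} forces $|\tilde f|$ to equal the maximum flow value. The main obstacle is precisely the middle step: showing that every artificial arc can be emptied using a residual circuit confined to one side of $(S,\bar S)$, which is exactly what the optimality of the tree plus Lemma \ref{strong path} buys.
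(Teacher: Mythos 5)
Your proposal is correct and follows essentially the same route as the paper's own proof: both use the super source/sink construction and flow decomposition, invoke Lemma (\ref{strong path}) together with the optimality condition $c_f(a,b)=0$ across the cut to confine the cancellation paths to one side of $(S,\bar S)$, observe that the saturation of the cut is therefore preserved so that $|f|=C(S,\bar S)$, and conclude via Lemma (\ref{mincutmaxblock}) that this value is maximum. The only cosmetic difference is that you cite Theorem (\ref{maxflowmincut}) for the final step where the paper cites Lemmas (\ref{mincutmaxblock}) and (\ref{optimal}), which amounts to the same appeal to duality.
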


\newpage

\section{Applications}

There are many not so obvious applications of maximum flow algorithms and results to different pure and applied topics, we show three interesting problems that can be solved using the previous results.

\subsection{Hall's Marriage Theorem}

Let $G=( V\cup W,E)$ be a bipartite graph, where $V \cap W = \emptyset$ and $|V|=|W|=n$. Label the vertices in $V$ as $v_1, \ldots , v_n $, and the vertices in $W$ as $w_1, \ldots, w_n $. A perfect matching on G is a permutation $\sigma \in S_n $ such that $[v_i,w_{\sigma(i)}] \in E$ for every $i=1, \ldots, n $.\\

\begin{defi} Let $S \subset V$.  $N(S):=\{ w \in W : [v,w] \in E \mbox{ for some } v \in S  \} $ is the set of neighbors of S.

\end{defi}

 We prove the following using the Maxflow-Mincut theorem (\ref{maxflowmincut}).\\

\begin{teo}\label{Halls} {\bf (Hall's Marriage Theorem)}
A perfect matching exists if and only if
$$
\forall S\subset V, \, |S| \leq |N(S)|
$$

\begin{proof}
Clearly such condition is necessary as $\sigma$ is injective. Suppose $\forall S\subset V, \, |S| \leq |N(S)|$. Now we construct an $s-t$ network by directing all edges $e\in E$ from $V$ to $W$, adding a source $s$ and sink $t$ and appending the edges $\{ (s,v) : v \in V \} \cup \{ (w,t) : w \in W \}$. We set the capacity of such new edges to 1, and the capacity of the original edges to $n+1$. Let $S$ be a minimum cut on such network. We show that $C(S, \bar S)=n$. $C(S,\bar S) \leq n$ as the cut $S'=\{s\}$ has capacity $n$ and $C(S,\bar S)$ is minimum. Now we show $C(S, \bar S) \geq n$. Let $X=S \cap V $. $N(X) \subset W $ and if $N(X) \not \subset S$ then there would be an edge crossing the cut, of capacity $n+1$ so $C(S, \bar S) \geq n+1$ then $N(X)\subset S \cap W$. On the other hand, all edges traversing the cut are of the form $(s,v')$ where $v' \in V \backslash X$ or of the form $(w',t)$ where $w' \in S \cap W  $. Then
$$
C(S,\bar S)= \sum_{v' \in V \backslash S}c(s,v') + \sum_{w' \in S \cap W}c(w',t)= \underbrace{|V\backslash S|}_{=n-|X|} + \underbrace{| S \cap W|}_{\geq N(X)} \geq n-|X|+\underbrace{N(X) }_{\geq |X|} \geq n-|X|+|X|
$$
So the capacity of a minimum cut is $n$. By theorem (\ref{maxflowmincut}) and (\ref{integer}) there exists a maximum flow $f$ of integer values and net flow $|f|=n$. As there are only $n$ edges out of the source $s$ and into the sink $t$, and they have capacity $1$, they must be saturated. By conservation of flow and the fact that the flow is integer, for any $v \in V$ there exists only one $w\in W$ such that $f(v,w)=1$. Again by conservation of flow and integrality, for any $w \in W$ there exists only one $v \in V$ such that $f(v,w)=1$. This shows that the edges directed from $V$ to $W$ carrying a flow of $1$ define a perfect matching on $G$.\\
\end{proof}
\end{teo}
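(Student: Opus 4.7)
The plan is to reduce the existence of a perfect matching to a maxflow computation on an auxiliary network and then use Theorem \ref{maxflowmincut} together with Corollary \ref{integer} on integrality. First, the necessity direction is immediate: a perfect matching $\sigma$ gives an injection $S \to W$ sending $v_i$ to $w_{\sigma(i)} \in N(S)$, so $|N(S)|\geq |S|$.

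For sufficiency, I will build a network by orienting every edge in $E$ from $V$ to $W$, adjoining a source $s$ and sink $t$, adding arcs $(s,v)$ for each $v \in V$ and $(w,t)$ for each $w\in W$ of capacity $1$, and giving the middle $V\to W$ arcs a large capacity (say $n+1$, or $\infty$) so that no minimum cut can contain one. The point of the large capacity is purely to force every minimum cut to consist entirely of unit-capacity source and sink arcs, which makes its structure easy to read off combinatorially. Once a max integer flow $f$ of value $n$ is produced, conservation plus the unit capacities on the $s$- and $t$-arcs (together with Corollary \ref{integer}) will force each $v$ to send exactly one unit to a unique $w$ and each $w$ to receive exactly one unit, giving a perfect matching.

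The remaining task, and the main content, is to show that the Hall condition implies the min cut has capacity exactly $n$. The cut $(\{s\},V\cup W\cup\{t\})$ already has capacity $n$, so I only need a matching lower bound. Take any cut $(S,\bar S)$ of capacity less than $n+1$; since middle arcs cost $n+1$, none of them can cross, meaning every neighbor of $X:=S\cap V$ must lie in $S\cap W$, i.e.\ $N(X) \subseteq S\cap W$. The arcs actually crossing the cut are then $(s,v')$ for $v'\in V\setminus X$ and $(w',t)$ for $w'\in S\cap W$, so
\[
C(S,\bar S) \;=\; (n-|X|) + |S\cap W| \;\geq\; (n-|X|) + |N(X)| \;\geq\; (n-|X|) + |X| \;=\; n,
\]
where the last inequality is exactly Hall's hypothesis applied to $X$.

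The step I expect to require the most care is the choice of capacity for the middle arcs and the bookkeeping in the cut inequality: one has to be sure that the large capacity truly prohibits those arcs from appearing in any candidate minimum cut, and that the edges crossing the cut are correctly enumerated (in particular, that nothing counted on the source side is counted again on the sink side). Once that inequality is in place, combining Theorem \ref{maxflowmincut} (max flow equals min cut, hence max flow $= n$) with Corollary \ref{integer} (integrality of an optimal flow) finishes the argument by extracting the matching from the $0/1$-valued optimal flow on the middle arcs.
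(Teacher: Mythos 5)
Your proposal is correct and follows essentially the same route as the paper: the same auxiliary network with unit source/sink arcs and capacity-$n+1$ middle arcs, the same cut-capacity computation $C(S,\bar S) \geq (n-|X|)+|N(X)| \geq n$ invoking Hall's condition, and the same extraction of the matching from an integral max flow via Theorem \ref{maxflowmincut} and Corollary \ref{integer}. No substantive differences to report.
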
 

\begin{cor} There exists a polynomial time algorithm that finds a perfect matching on a bipartite graph.

\end{cor}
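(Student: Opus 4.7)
The plan is to make the construction from the proof of Theorem \ref{Halls} algorithmic. Given a bipartite graph $G=(V\cup W, E)$ with $|V|=|W|=n$, first build the network $(G',c)$ used in that proof: orient every $e\in E$ from $V$ to $W$ and assign capacity $n+1$, add a source $s$ and a sink $t$, and include arcs $(s,v)$ for $v\in V$ and $(w,t)$ for $w\in W$ each of capacity $1$. This construction has $|V(G')|=2n+2$ vertices and $|E(G')|\leq n^2+2n$ arcs, so it is produced in polynomial time from $G$.

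Next, run any of the polynomial-time maximum flow algorithms described in Section \ref{algs} (for instance, a shortest-augmenting-path variant of Ford--Fulkerson, or the Goldberg--Tarjan algorithm which runs in $O(n^2 m)$ steps). Since every capacity is an integer, Corollary \ref{integer} guarantees that the algorithm returns a maximum flow $f$ with integer values on every edge. If $|f|<n$ then, by the proof of Theorem \ref{Halls}, Hall's condition must fail and no perfect matching exists; the algorithm reports this and halts. Otherwise $|f|=n$, since $C(\{s\},\overline{\{s\}})=n$ is an upper bound on $|f|$.

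Finally, extract the matching: scan each edge $e=(v,w)$ directed from $V$ to $W$ and output those with $f(v,w)=1$. The argument in the proof of Theorem \ref{Halls} shows that integrality plus conservation of flow force each $v\in V$ to have exactly one such outgoing edge and each $w\in W$ to have exactly one such incoming edge, so the extracted set is precisely a perfect matching $\sigma\in S_n$. This postprocessing is linear in $|E|$.

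The only possible obstacle is bookkeeping: making sure that the maxflow routine one invokes is truly polynomial in the input size of $G$ (so, for example, one should not use the naive Ford--Fulkerson scheme whose running time depends on the capacities, but rather a shortest-augmenting-path or scaling variant), and verifying that the extraction step correctly handles the integrality guarantee from Corollary \ref{integer}. Both points are standard, so no substantive difficulty arises beyond what the proof of Theorem \ref{Halls} already supplies.
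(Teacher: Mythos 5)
Your proposal is correct and is exactly the argument the paper intends: the corollary is stated without proof because it follows immediately from the network construction in the proof of Theorem \ref{Halls}, the integrality guarantee of Corollary \ref{integer}, and the polynomial-time maximum flow algorithms of Section \ref{algs}. Your added caution about using a capacity-independent variant of Ford--Fulkerson (or Goldberg--Tarjan) is a sensible refinement, but it does not change the approach.
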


\subsection{Counting disjoint chains in finite posets}
\begin{defi}
A finite poset $P:=(P', \leq)$ is a finite set $P'$ together with a partial order $ \leq $ on $P'$. We say that $P$ has $\hat 0$ or ($\hat 1$) if there exists an element $x \in P$ such that $x \leq y$ or $x \geq y$ for any $ y \in P$, respectively. A chain is a subset $c:=\{x_0, \ldots, x_n\} \subset P $ such that for any two elements $x_1,x_2$ either $x_1 \leq x_2 $ or $x_2 \leq x_1 $. \end{defi}

Given a finite poset $P$, we say that a chain $C$ is maximal if $C\cup \{x \} $ is not a chain for any $ x \in P \backslash C $. Clearly any maximal chain contains $\hat 0$ and $\hat 1$. We say that $y$ covers $x$ in the poset if $x<y$ and there exists no $z$ such that $x<z<y$. We say that a set $\{ C_i \}$ of chains are cover-disjoint if whenever $y$ covers $x$ then $ \{x, y\} $ belongs to at most one chain $C_i$. We would like to find a subset $S$ of the set of maximal chains, such that $S$ is cover-disjoint and such that $|S|$ is maximum.  \\

One of the possible ways of doing this is to work in a \emph{greedy algorithm} fashion, finding one of such chains and then repeating the process in the remaining part of the poset. We note that this may not lead to a partition of maximum size, as the example in Figure 2 suggests.

\begin{figure}[h!]
\begin{center}
\includegraphics[scale=0.5]{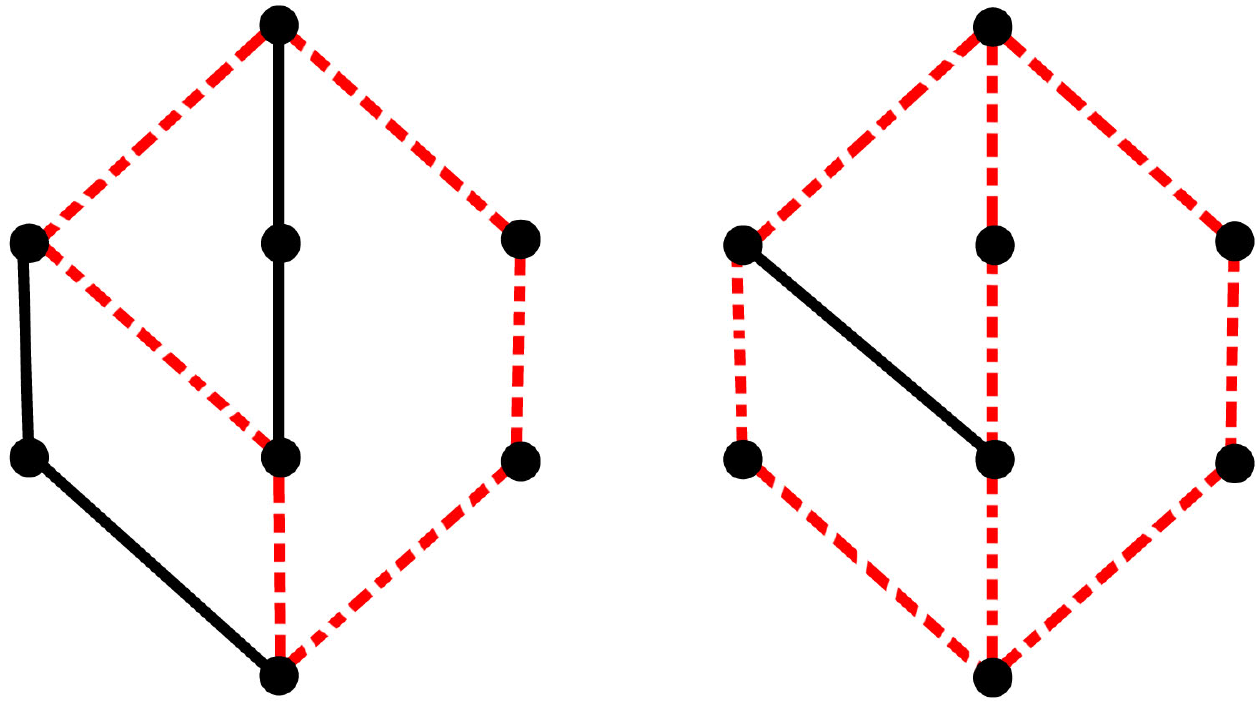} 
\caption{Two sets of maximal chains in a poset}
\end{center}
\end{figure} 

Instead, we consider an associated network $P_{st}$ where $s=\hat 0 $, $t = \hat 1$, $V=P$ and $(x,y) \in E $ whenever  $y$ covers $x$ . We define a capacity function with value $1$ on every edge.

\begin{lem}
The maximum number of disjoint chains in $P$ is equal to the net flow of a maximum flow in $P_{st}$.
\begin{proof}
We first show that given a set of disjoint chains $C_1, \ldots, C_k$ we can find an associated feasible flow on $P_{st}$ with flow value equal to $k$. $C'_{i} \{\hat 0, \hat 1 \} \cup C_i  $ forms a chain from $ \hat 0 $ to $\hat 1 $ and a directed path from source to sink in $P_{st}$. Define a flow $f_i$ in $P_{st}$ as having value 1 on the edges of $C'_i$ and zero everywhere else. This is a feasible flow. $f=f_i+\ldots + f_k$ is a function that satisfies conservation of flow and, as the chains were disjoint, it also satisfies capacity constraints so it is a feasible flow. Each $f_i$ saturates one edge leaving the source, hence $|f|=k$. This shows that $k$ is always less than the value of a maximum flow.\\

Now given a maximum flow $f$ on $P_{st}$ we construct a set of disjoint chains of size $|f|$. We may assume $f$ has integer values by corollary (\ref{integer}). By theorem (\ref{flowdecomp}) we may write $f= f_1 \ldots f_j$ where each $f_i$ is an $s-t$ path. As the flow has integer values, so do $f_1, \ldots f_j$. As the capacities are all equal to $1$ each $f_i$ must have net flow equal to one and so the $f_i$'s do not intersect as they saturate all the path. Then $\{f_1, \ldots, f_j \} $ define a set of disjoint $\hat0,\hat1$ chains. Finally $|f|=|f_1|+\ldots+|f_j|=j$.\\

\end{proof}
\end{lem}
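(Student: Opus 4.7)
The plan is to prove both inequalities. First, to show that any set of $k$ cover-disjoint maximal chains yields a feasible flow of value $k$ in $P_{st}$, I would observe that every maximal chain $C_i = \{\hat 0 = x_0 < x_1 < \cdots < x_r = \hat 1\}$ has consecutive elements related by covers (if $x_{j+1}$ did not cover $x_j$, inserting an element between them would extend the chain, contradicting maximality). Hence each $C_i$ traces out a directed path from $s$ to $t$ in $P_{st}$. I would define $f_i$ as the flow that places one unit on every edge of this path and zero elsewhere; each $f_i$ is feasible since all edges have capacity $1$. Summing $f := f_1 + \cdots + f_k$ preserves conservation at every internal vertex, and the cover-disjointness hypothesis guarantees that no edge receives more than one unit of flow, so $f$ is a feasible flow with $|f| = k$. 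This gives the $\leq$ direction.

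For the other direction, given a maximum flow $f$ on $P_{st}$, I would first invoke Corollary~\ref{integer} to assume $f$ has integer values (the capacities are all $1$, so $f$ takes values in $\{0,1\}$ on every edge). Then by the Flow Decomposition Theorem~\ref{flowdecomp}, I can write $f = f_1 + \cdots + f_j$ where each summand is either an $s\text{-}t$ path-flow or a cycle-flow. Since $P_{st}$ is acyclic (its edges encode a partial order, and covering relations cannot form a directed cycle), no cycle-flows appear. Because every edge has capacity $1$ and the $f_i$ are integer-valued path-flows, each $f_i$ carries exactly one unit of flow along its path, and distinct $f_i, f_j$ have edge-disjoint supports; hence $j = |f|$.

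Each path-flow $f_i$ corresponds to a sequence $\hat 0 = x_0, x_1, \ldots, x_r = \hat 1$ where every $(x_\ell, x_{\ell+1})$ is a covering edge of $P$, and such a sequence is automatically a maximal chain (no element can be inserted between consecutive covers, and the chain reaches both $\hat 0$ and $\hat 1$). The chains obtained are cover-disjoint precisely because the underlying path-flows use disjoint sets of covering edges. This yields $|f|$ cover-disjoint maximal chains, completing the proof.

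I expect the only subtle point to be verifying that maximal chains in $P$ and directed $\hat 0$-$\hat 1$ paths in $P_{st}$ are the same objects; once that correspondence is recorded, the rest reduces to applying the integrality of maximum flows and the flow decomposition theorem that were already established. The acyclicity argument needed to discard cycle-flows in the decomposition is immediate from antisymmetry of $\leq$, so no real obstacle arises.
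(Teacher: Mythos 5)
Your proof is correct and follows essentially the same route as the paper: construct a unit path-flow from each chain to get one inequality, then use integrality (Corollary~\ref{integer}) and flow decomposition (Theorem~\ref{flowdecomp}) for the other. You are in fact slightly more careful than the paper in two spots --- explicitly recording that maximal chains correspond to covering paths, and invoking acyclicity of the Hasse diagram to rule out cycle-flows in the decomposition, a step the paper passes over silently.
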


\subsection{Image segmentation}
The problem of segmenting a given image is that of defining a partition of the pixels as two sets, the foreground and the background, so that they form coherent regions. There are multiple other problems defined under the label of image segmentation. In the following we show how to define the problem and how to solve it using algorithms of flow optimization in a network, following T.M. Murali's lecture notes \cite{Murali}. Throughout this section we denote a directed edge as $(v,w)$ and an undirected edge as $[v,w]$.\\

We define a finite undirected graph $G=(V,E)$ where $V$ is the set of pixels of an image, $V \subset \Z_+ \times \Z_+ $ and the set of edges $E$ comprises the set of neighbors for each pixel $(x,y) \in V$. The set of neighbors of $(x,y)$ is $N_{(x,y)} :=  \{ (x+1,y), (x-1,y), (x,y+1), (x,y-1) \} \cap V $. We define functions $a: V \rightarrow [0,1] $, $v \mapsto a_v= \mbox{ probability that }v \mbox{ is in the foreground} $, $b: V \rightarrow [0,1] $, $v \mapsto b_v= \mbox{ probability that }v \mbox{ is in the background} $ and a penalty function $ p: E \rightarrow \R_+ $, $[v,w] \mapsto p{[v,w]}= $ penalty for defining $v,w$ for defining $v$ in the foreground and $w$ in the background.

{\bf Problem: } Partition the set $V$ as two sets $A,B$ (foreground/background) such that the function
$$
s(A,B):= \sum_{v \in A} a_v + \sum_{w \in B}b_w - \sum_{\substack{[v,w] \in E \\ |A \cap [v,w]|=1}} p_{[v,w]}
$$

is maximized.The idea is that if $a_v > b_v$ it's preferable to set $v$ as in the foreground and if a pixel $v$ has most of it's neighbors defined as in the foreground, it is preferable to set $v$ as in the foreground also. Such probabilities are given in the problem, however, different choices of such values may lead to better or worse results in the segmentation of the image. For instance if one is interested in isolating a small object in a big background, the best choice is to take higher values for the probability function $a_v$. \\

In order to construct such sets, one must define as foreground(background) vertices those with higher probability of belonging to the foreground(background), while reducing the total penalty of the boundary between foreground and background. We want to formulate this problem as a Mincut problem. In order to do this we have to overcome some difficulties, namely, that of working with an undirected graph rather than a capacitated network, and a function to be maximized rather than minimized.

\begin{lem}\label{maxmin} Let $Q=\sum_{v\in V}a_v +b_v$ then 
$$
s(A,B)=Q- \sum_{v \in A} b_v - \sum_{w \in B}a_w - \sum_{\substack{[v,w] \in E \\ |A \cap [v,w]|=1}} p_{[v,w]}=Q-s'(A,B)
$$
where
$$
s'(A,B)=\sum_{v \in A} b_v +  \sum_{w \in B}a_w + \sum_{\substack{[v,w] \in E \\ |A \cap [v,w]|=1}} p_{[v,w]}
$$
Then maximizing $s(A,B)$ is the same as minimizing $s'(A,B)$

\end{lem}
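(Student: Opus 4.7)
The plan is a direct algebraic verification followed by an observation about constants. Since $(A,B)$ partitions $V$, every sum over $V$ splits as a sum over $A$ plus a sum over $B$; in particular I would expand $Q=\sum_{v\in A}a_v+\sum_{v\in B}a_v+\sum_{v\in A}b_v+\sum_{v\in B}b_v$.

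Substituting this into $Q-s'(A,B)$, the terms $\sum_{v\in A}b_v$ and $\sum_{w\in B}a_w$ cancel against the corresponding summands of $s'(A,B)$, leaving exactly $\sum_{v\in A}a_v+\sum_{w\in B}b_w-\sum_{[v,w]\in E,\;|A\cap[v,w]|=1}p_{[v,w]}$, which matches the definition of $s(A,B)$. This establishes the displayed identity.

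For the equivalence of optimizations, I would observe that $Q$ is determined by the input data $(a_v,b_v)_{v\in V}$ and does not depend on the choice of partition $(A,B)$. Hence $s(A,B)=Q-s'(A,B)$ is an affine function of $s'(A,B)$ with coefficient $-1$, so a partition maximizes $s$ over all choices of $(A,B)$ if and only if it minimizes $s'$ over the same collection.

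There is essentially no obstacle here: the lemma is a bookkeeping identity whose content lies entirely in the set-theoretic fact that $A\sqcup B=V$. The penalty terms $\sum p_{[v,w]}$ appear verbatim on both sides and play no role in the cancellation, which is carried out purely among the $a_v$ and $b_v$ contributions.
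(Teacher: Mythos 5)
Your proof is correct. The paper actually states Lemma \ref{maxmin} without any proof at all (it is treated as an immediate bookkeeping identity), so there is nothing to compare against; your verification --- splitting $Q=\sum_{v\in V}(a_v+b_v)$ along the partition $A\sqcup B=V$, cancelling $\sum_{v\in A}b_v$ and $\sum_{w\in B}a_w$, and noting that the constant $Q$ is independent of the partition so that maximizing $s$ is equivalent to minimizing $s'$ --- is exactly the argument the author leaves implicit.
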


Now we consider a directed graph $G^*=(V \cup \{s,t\}, E^* \cup A(s) \cup A(t) \}$ where $E^*=\{ (v,w) : [v,w] \in E \}$, $A(s)=\{(s,v) : v \in V\}$ and $A(t)=\{ (v,t) : v \in V \}$ and we define a capacity function as $c(s,v)=a_v$, $c(v,t)=b_v$ and $c(v,w)=p[v,w]$. We then have a network where the source(sink) is connected to each pixel with such edge with capacity equal to $a_v$ ($b_v$) and where each undirected edge $[v,w]$ of neighbor pixels is replaced with two \emph{antiparallel} edges $(v,w),(w,v)$ both with capacity equal to $p[v,w]$. Then it follows immediately that for a cut $(A, B)$ in such network we have

$$
C(A,B)=\sum_{v \in A} b_v +  \sum_{w \in B}a_w + \sum_{\substack{[v,w] \in E \\ |A \cap [v,w]|=1}} p_{[v,w]}=s'(A,B)
$$

By lemma (\ref{maxmin}) arrive at next result:

\begin{cor} A minimum cut $(A,B)$ in $G^*$ solves the problem of image segmentation.
\end{cor}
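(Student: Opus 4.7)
The plan is to observe that this corollary is essentially a bookkeeping exercise: the construction of $G^*$ was set up so that the cut capacity in $G^*$ literally equals $s'(A,B)$ on the pixel set, and lemma \ref{maxmin} already reduced maximizing $s$ to minimizing $s'$. So the proof is just to chain these two identifications together while being careful about the roles of $s$ and $t$.

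First I would take an arbitrary cut $(A^*,B^*)$ of the network $G^*$, i.e.\ a partition of $V \cup \{s,t\}$ with $s \in A^*$ and $t \in B^*$, and let $A = A^* \setminus \{s\}$, $B = B^* \setminus \{t\}$, so that $(A,B)$ is a partition of the pixel set $V$. The map $(A^*,B^*) \mapsto (A,B)$ is a bijection between cuts of $G^*$ and partitions of $V$. Next I would compute $C(A^*,B^*)$ by splitting the edges that traverse the cut into three groups: (a) edges $(s,v)$ with $v \in B$, contributing $\sum_{v \in B} a_v$; (b) edges $(v,t)$ with $v \in A$, contributing $\sum_{v \in A} b_v$; and (c) directed edges $(v,w)$ with $v \in A$, $w \in B$ coming from some undirected neighbor edge $[v,w] \in E$ with $|A \cap [v,w]|=1$, contributing $\sum_{[v,w]} p_{[v,w]}$ (note that for each such undirected edge exactly one of the two antiparallel arcs crosses the cut from $A^*$ to $B^*$, so each term is counted once). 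Summing yields exactly
\[
C(A^*,B^*) \;=\; \sum_{v \in A} b_v + \sum_{w \in B} a_w + \sum_{\substack{[v,w] \in E \\ |A \cap [v,w]|=1}} p_{[v,w]} \;=\; s'(A,B).
\]

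Finally I would invoke lemma \ref{maxmin}: since $s(A,B) = Q - s'(A,B)$ with $Q$ a constant independent of the partition, a partition $(A,B)$ of $V$ maximizes $s$ if and only if it minimizes $s'$, if and only if the corresponding cut $(A \cup \{s\},\, B \cup \{t\})$ minimizes $C$ in $G^*$. Hence a minimum cut of $G^*$ produces, by removing $s$ and $t$, an optimal foreground/background partition of the image.

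The only step that requires any care is the capacity computation, specifically making sure the antiparallel edges between neighboring pixels are accounted for correctly (each undirected edge $[v,w]$ with $|A \cap [v,w]|=1$ contributes $p_{[v,w]}$ exactly once, not twice, because only the arc directed from $A^*$ to $B^*$ crosses the cut). Everything else is immediate from the construction of $G^*$ and lemma \ref{maxmin}.
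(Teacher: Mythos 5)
Your proposal is correct and follows essentially the same route as the paper: the paper likewise observes that $C(A,B)=s'(A,B)$ by construction of $G^*$ and then invokes Lemma \ref{maxmin} to convert minimizing $s'$ into maximizing $s$. Your added care about the antiparallel arcs each contributing $p_{[v,w]}$ exactly once is a detail the paper leaves implicit, but it is the same argument.
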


There's only one difficulty left to overcome, as we must deal only with simple graphs, we must replace the set of antiparallel edges. We do this by adding for each pair of neighbor vertices $v,w$ two new vertices $c_{vw}, c_{wv} $ and replacing the antiparallel edges with the edges $(v,c_{vw}), (c_{vw}, w), (w, c_{wv}), (c_{wv}, v) $ all with capacity equal to $p[v,w] $. It use readily checked that it is equivalent to find a maximum flow on this new graph. \\

\begin{figure}[h!]
\begin{center}
\includegraphics[scale=0.5]{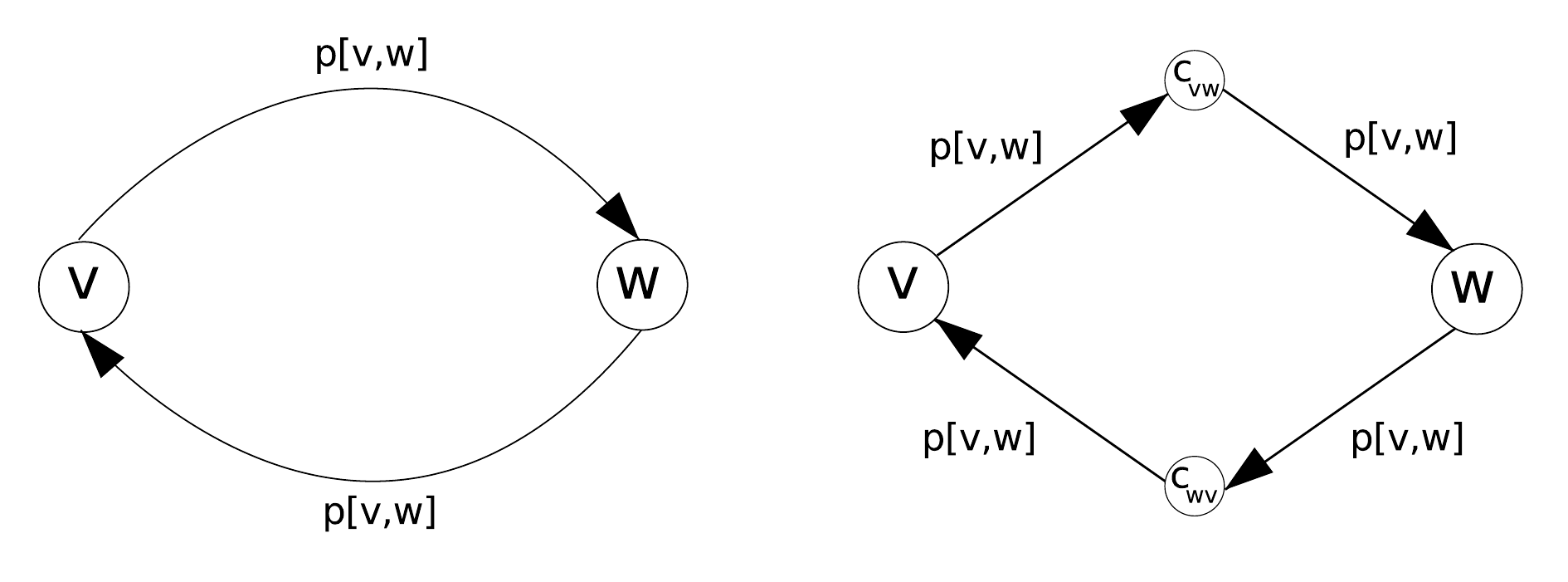} 
\caption{How to get a simple graph}
\end{center}
\end{figure}

Then we solve the problem by finding a maximum flow on such network using either the Ford-Fulkerson or Goldberg-Tarjan algorithm and then recovering a minimum cut using theorem (\ref{nopath}).

\newpage

\section{A Generalization of Maxflow}

We would like to define a more general optimization problem that reduces to the Maxflow problem on graphs and then try to generalize the optimization algorithms studied on previous sections. 

\subsection{Preliminaries}

\begin{defi} A simplex on a set $S$ is a finite subset $S' \subset S$.
\end{defi}

\begin{defi} A simplicial complex $\Delta$ on a set $S$ is a set of simplices on $S$ closed under taking subsets. Elements of $\Delta$ are called faces. Maximal faces (faces that are not subsets of any other face) of the complex are called facets. Elements of a simplex $X$ are called its vertices. The dimension of a face $X$ is defined as $\dim(X)=|X|-1$. The dimension of the complex $\dim(\Delta)$ is defined as the maximum over the dimension of its faces.

\end{defi}

\begin{defi} A simplicial complex $\Delta$ is called \emph{pure} if all facets have the same dimension.

\end{defi}

Given a network $(G,c)$ we can consider the graph $G^*$ that results from appending the edge $(s,t)$ with infinite capacity and then, the problem of finding a maximum flow on the original network is equivalent to finding a maximum \emph{circulation} on $G^*$, that is a positive function on the edges of $G^*$ satisfying capacity constraints and flow constraints on every vertex. In this case the objective function $|f|$ is the amount flowing through the edge with infinite capacity.\\

\begin{defi}Let $X=\{v_0,\ldots, v_d \}$ be a simplex with $d \geq 1$. Consider the set of \emph{orderings} of vertices of $X$,  $O(X):=\{ [v_{\sigma(0)}, \dots, v_{\sigma(d)}] : \sigma \in S_{d+1} \} $ modulo the relation $[v_0, \ldots, v_d]=[v_{\sigma(0)}, \dots, v_{\sigma_(d)}] \Leftrightarrow \sigma \mbox{ is even} $. This partitions the set in two equivalence classes that we call \emph{orientations} of $X$. To choose an orientation for $X$ is to choose one of such orientations, which we call the positive orientation and we say that $X$ is oriented. We denote an oriented simplex as $X=(v_0, \ldots, v_d)$.

\end{defi}

\noindent {\bf Notation.} For a simplicial complex $X$ we let $X^{(d)}$ be the set of its $d$-dimensional simplices. 

\begin{defi} For $d \geq 1$ Let $C_d$ be the free abelian group over the orderings of elements in $X^{(d)}$ modulo the relations $[v_0,\ldots,v_d]=[v_{\sigma(0)}, \dots, v_{\sigma(d)}] \Leftrightarrow \sigma \mbox{ is even} $ and $ [v_0,\ldots,v_d]=-[v_{\sigma(0)}, \dots, v_{\sigma(d)}] \Leftrightarrow \sigma \mbox{ is odd}$.  $C_0$ is defined in the same fashion but notice that the relations become trivial.

\end{defi}

\begin{defi} The boundary operator $\partial_d: C_d \rightarrow C_{d-1} $ is a homomorphism defined in the basis as
$$
\partial_d (X) =\sum_{i=0}^d(-1)^i [v_0,\dots ,v_{i-1},\hat{v_i},v_{i+1},\dots ,v_d]
$$
where $\hat{v_i}$ means deleting such term.
\end{defi}

Elements of $C_d$ are called $d$-chains. Elements of the subgroup $\ker ( \partial_d)$ are called $d$-cycles.

\subsection{Higher Maxflow}

\begin{defi} A $d$-dimensional network is a triple $(X,T,c)$ where
\begin{enumerate}
\item{$X$ is a simplicial complex of pure dimension $d$ all of whose facets have chosen orientations.}
\item{$T\in X^{(d)}$ is a distinguished oriented simplex of dimension $d$ satisfying the source condition:
\begin{itemize}
\item{For every oriented $d$-simplex $\sigma$ which intersects $T$ in a $(d-1)$-dimensional simplex $\tau$ the signs of $\tau$ in $\partial \sigma$ and $\partial T$ are opposite.}
\end{itemize}
}
\item{$c:X^{(d)}\rightarrow \mathbb{R}_+$ is a function with $c(T)=\infty$.} 
\end{enumerate}
\end{defi}

 \begin{rem} The source condition is a generalization to the assumption that on a network $(G,c)$ every edge incident with the source is directed out of it and every edge incident with the sink is directed into it. The source simplex $T$ is the generalization of an appended edge directed from sink to source with infinite capacity.\end{rem}

\begin{defi} A flow on a network $(X,T,c)$ is a function $f: X^{(d)}\rightarrow \mathbb{R}_+$ satisfying the following properties:
\begin{enumerate}
\item{ $f$ is a weighted cycle, that is
\[\sum_{\sigma\in X^{(d)}}f(\sigma)\partial{\sigma}=0\]
}
\item{ For every $d$-dimensional simplex $\sigma$ we have $0\leq f(\sigma)\leq c({\sigma})$}
\end{enumerate}
\end{defi}

\begin{rem} The condition that $f$ is a weighted cycle is a generalization of the conservation of flow condition. To see this we define the following: \end{rem}

\begin{defi}
Let $v=\{v_0, \ldots, v_{d-1} \} \in X^{(d-1)} $ be a $d-1$ dimensional simplex of a simplicial complex $X$ of degree $d$. Fix an orientation $[v]=[v_0, \ldots, v_d]$ of v. Let $e \in X^{(d)}$
\begin{displaymath}
   \phi (v,e) = \left\{
     \begin{array}{rl}
       1 & :  +[v] \mbox{ appears in } \partial e \\
       -1 & : -[v] \mbox{ appears in } \partial e \\
       0 &:  \mbox{else}
     \end{array}
   \right.
\end{displaymath} 

\end{defi}

This is, in fact, a generalization of the incidence function defined on section (\ref{flowian}). Then the condition that $f$ is a weighted cycle is equivalent to:\\

\noindent \emph{ for any $d-1$ dimensional oriented simplex $v$ }

$$
\sum_{e \in X^{(d)} }\phi(v,e)f(e) =0
$$

\begin{defi} The amount carried by a flow $f$ in a $(d)$-dimensional network $(X,T,c)$ is the number $f(T)$.
\end{defi}

This comes from the fact that after appending the edge $(t,s)$ to a network $(G,c)$ with flow $f$ one must define $f(t,s)=|f|$ so that conservation of flow holds in all vertices including $s,t$.

{\bf (HMax-Flow.)} The higher max flow problem asks to  find the maximum possible amount $f(T)$ which can be carried by a flow on a network $(X,T,c)$.

\begin{rem} A $(1)$-dimensional network is a capacitated graph ($T$ is the edge from $t$ to $s$) and HMax-Flow reduces to Max-Flow on graphs. 
\end{rem}

\begin{rem} If all the capacities are $1$ and $X$ is a triangulated orientable $d$-manifold and $T$ is any top-dimensional simplex then every top-dimensional cycle is a flow with $f(T)=1$. This is one HMax-flow. This follows directly from the definition of oriented manifold.
\end{rem}

\subsection{As an LP problem}
Even in higher dimension, the problem can be stated as a set of linear equalities and inequalities in a finite dimensional vector space, so it can also be stated as a linear program as we did in section (\ref{lp}). We continue with the convention that $|n|=|X^{(d-1)}|$ is the number of \emph{$(d-1) faces$}, and $m=|X^{(d)}|-1$ is the number of \emph{facets} without considering the source $T$. Then there are $2m$ flow restrictions, two for each edge, and $n$ flow conservation restrictions, one for each vertex. As always we consider fixed enumerations of the $(d-1)$ faces $v_i$ and of the facets $e_i$ (suppose $v_{m+1} = T $). After fixing an orientation of the $(d-1)$ faces, we define the matrix $[\partial]_{ij}=\phi(v_i, e_j)$, it has dimension $n\times m+1$. Let $I_m$ be the identity matrix of dimension $m$ and $I_m, 0$ be such matrix with a zero column vector appended as the rightmost column. Then we can state the problem as :

\begin{equation}\label{primal}
\begin{array}{ccccc}
{\displaystyle \mathop{\max}_{x \in \R^{m+1}}  x_{m+1} }& ; & \left [ \begin{array}{c} \partial \\ - \partial \\ I_m,0
\end{array} \right ] x \leq \left [ \begin{array}{c} 0_{n} \\ 0_{n} \\ c  \end{array} \right ]&;& x \geq 0
\end{array}
\end{equation}

where $c \in \R^{m} $ is the vector $c_i=c(e_i) $.\\

Computing the dual we find it can be stated as
$$
\min_{(v,e) \in \R^{n+m}} c^Te, \, \, \, \left[ \partial ^T \begin{array}{c} I_m \\ 0 \end{array} \right] (v,e) \geq e_{m+1}, \, \, e \geq 0
$$
where $v \in \R^n$ and $e \in R^m$.

\subsection{Further examples and conjectures}

\noindent {\bf Example: } Consider the three dimensional simplex. It is a simplicial complex on the set $\{1,2,3,4\}$ where the elements of the complex are the faces of the simplex.

\begin{figure}[h!]
\begin{center}
\includegraphics[scale=0.7]{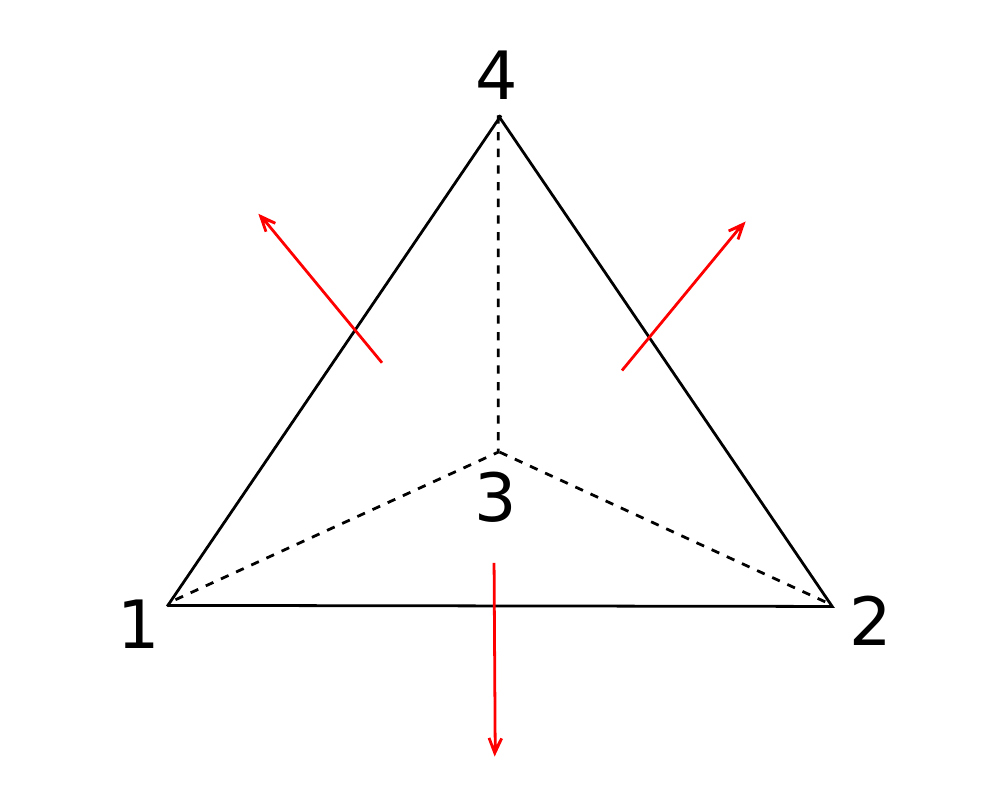} 
\caption{Oriented simplicial complex, the tetrahedron $Sim_2$}
\end{center}
\end{figure} 

Take the orientation as the induced by the outward normal. With this orientation the oriented faces are $ [234], [124], [143], [132] $. We choose the orientations of the one dimensional faces to be $[12], [14],[24], [13], [34], [23] $. The capacity on each facet is chosen to be 1. In this example we drop the commas, dealing only with integers less than 10. It is readily checked that $[132]$ satisfies the source condition so we may take $T=[132]$. With the enumeration of facets and $(d-1)$ faces given by this order we have that the incidence matrix is given by:

 \begin{table}[h!]
  \begin{center}
 $$ \left [  \begin{tabular}{ c c c c c }
   
    0 & 1 & 0 &-1  \\
    0& -1 & 1 & 0 \\
    -1 & 1 & 0 & 0  \\
    0 & 0 & -1 & 1 \\
    1 & 0 & -1 & 0 \\
    1 & 0 & 0 & -1
  
    \end{tabular} \right ] $$
  \end{center}
  \caption{Incidence matrix of $Sim_2$}
\end{table}

Towards a good definition of a cut on a generalized $d$-dimensional network we find feasible solutions of the dual of such problem. We are mostly interested in integer solutions to the problem. In this example we find both $\partial$ and $\left[ \partial^T \begin{array}{c} I_m \\ 0 \end{array} \right] $ to be totally unimodular matrix so the existence of integer optimal solutions is assured. We suggest the following definition. \\

\begin{defi} A cut on a generalized $d$-dimensional network $X$ is a partition $(S,S')$ of the $d-1$ faces of $X$.

\end{defi}

From a cut we can construct a solution to the dual program of HMaxflow. Let $(S,S')$ be a cut. Assign to the first $n$ dual variables $\lambda_v$ (where each one corresponds to a  $d-1$ face $v$ of the complex) the value $0$ if $v \in S$ and the value $1$ if $v \in S'$.  For each facet $\sigma$ of the complex, define $+(\sigma)$ be the set of $d-1$ faces such that $\tau$ induces their positive orientation and define $-(\sigma)$ in the same fashion. In the dual program, the variables that correspond to the $d-1$ faces are unrestricted in sign, while the variables that correspond to the facets $\eta_\sigma$ must be nonnegative. There is one inequality in the dual program for each facet that can be written in the form
$$
\sum_{v \in +(\sigma)} \lambda_v -\sum_{v \in -(\sigma)} \lambda_v + \eta_\sigma \geq 0
$$

for any $\sigma \neq T$, and
$$
\sum_{v \in +(T)} \lambda_v -\sum_{v \in -(T)} \lambda_v + \eta_T \geq 1
$$

So, given the cut and the assigned values to the $d-1$ faces variables we can define the dual variables corresponding to the facets $\eta_\sigma$ as

\begin{displaymath}
   \eta_\sigma = \left\{
     \begin{array}{rl}
       0 & : \sum_{v \in +(\sigma)} \lambda_v -\sum_{v \in -(\sigma)} \lambda_v \geq 0  \\
       -(\sum_{v \in +(\sigma)} \lambda_v -\sum_{v \in -(\sigma)} \lambda_v)  & : \mbox{else}
           \end{array}
   \right.
\end{displaymath} 
for $\sigma \neq T$ and
\begin{displaymath}
   \eta_T = \left\{
     \begin{array}{rl}
       0 & : \sum_{v \in +(\sigma)} \lambda_v -\sum_{v \in -(\sigma)} \lambda_v -1 \geq 0   \\
       -(\sum_{v \in +(\sigma)} \lambda_v -\sum_{v \in -(\sigma)} \lambda_v -1)  & : \mbox{else}
           \end{array}
   \right.
\end{displaymath} 

In such way it is readily checked the solution is dual feasible. We define the capacity of a cut $(S,S')$ as the value of the dual objective function at this solution, which is the weighted sum of the capacities of facets $\sigma \neq T $, with weights $\eta_\sigma$. In such way the capacity of a cut is always an upper bound to the value of a maximum flow, by weak duality. It is natural to ask whether the minimum of the capacities over all cuts equals the value of a maximum flow. We attempt to show this using an analogous probability argument as in section (2).

As in section (\ref{algs}) we can extend the capacity function defined on $X^{(d)}$ to the set of all orientations of elements of $X^{(d)}$ as $\bar c (x)=c(x) $ if $x\in X^{(d)}$ and $\bar c(x)=0 $ if $-x \in X^{(d)}$ and $\bar f (x)=f(x) $ if $x\in X^{(d)}$ and $\bar f(x)=-f(-x) $ if $-x \in X^{(d)}$.

\begin{defi} 
The residual complex $X_f$ is the (multi) simplicial complex whose facets are those $x \in X^{(d)}\cup -X^{(d)}$ such that the residual capacity of x $c_f(x):=\bar c(x)-\bar f (x) >0$. By definition, if the capacity function is not identically zero, the residual complex is a pure multisimplicial complex of dimension $d$.
\end{defi}

Recall from that from lemma (\ref{nopath}), a flow is not maximal if there exists a simple (no loops) path from $s$ to $t$ in the residual graph. After appending the edge $(t,s)$ this reduces to: a flow is not maximal if there is no simple cycle containing $(t,s)$ on the residual graph. This motivates the following definition:

\begin{defi}\label{augmentingcycle} 
Given a $(X,T,c)$ a $d$ dimensional network and a flow $f$ on $X$, an augmenting cycle is $d$-cycle $\sigma$ such that  $\sigma= \sum_i c_i X_i $ with $c_i \in \Z_+$ and $X_i \in X_f $ and such that $X_i=T$ for some $i$. \end{defi}

\begin{lem}\label{sum} Let $(X,T,c) $ be a network and $f, f'$ be two feasible flows in such network. Then $f+f'$ is positive and it is  a weighted cycle.
\begin{proof}

$$\sum_{\sigma\in X^{(d)}}(f+f')(\sigma)\partial{\sigma}= \sum_{\sigma\in X^{(d)}}f(\sigma) \partial{\sigma} +  \sum_{\sigma\in X^{(d)}}f'(\sigma) \partial{\sigma}=0 $$
The fact that $f+f'$ is positive is clear.
\end{proof}

\end{lem}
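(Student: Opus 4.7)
The plan is to verify the two assertions directly from the definitions, using the fact that both conditions in question are preserved under addition while the capacity constraint (which is deliberately absent from the conclusion) is not.

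First I would handle positivity pointwise: for every facet $\sigma\in X^{(d)}$, the defining condition of a flow gives $f(\sigma)\geq 0$ and $f'(\sigma)\geq 0$, so
\[
(f+f')(\sigma)=f(\sigma)+f'(\sigma)\geq 0.
\]
This is all that is required since positivity is asserted as a pointwise statement on facets.

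Next I would verify the weighted cycle condition. The key observation is that $\partial_d\colon C_d\to C_{d-1}$ is a group homomorphism, so the assignment $g\mapsto \sum_{\sigma\in X^{(d)}}g(\sigma)\,\partial\sigma$ is linear in $g$. Using linearity and the hypothesis that $f$ and $f'$ are each weighted cycles,
\[
\sum_{\sigma\in X^{(d)}}(f+f')(\sigma)\,\partial\sigma
\;=\;\sum_{\sigma\in X^{(d)}}f(\sigma)\,\partial\sigma
\;+\;\sum_{\sigma\in X^{(d)}}f'(\sigma)\,\partial\sigma
\;=\;0+0\;=\;0,
\]
so $f+f'$ lies in $\ker\partial_d$, i.e.\ it is a weighted cycle.

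There is essentially no obstacle: both assertions reduce to the linearity of $\partial_d$ and the pointwise stability of $\mathbb{R}_+$ under addition. The only point worth flagging is what the lemma does \emph{not} claim — namely, that $f+f'$ need not satisfy $f(\sigma)+f'(\sigma)\leq c(\sigma)$, so in general $f+f'$ is not itself a feasible flow. This is presumably by design, since the lemma appears to be preparing the ground for an augmenting-cycle style argument (cf.\ Definition \ref{augmentingcycle}) in which one adds a feasible flow to a suitable multiple of a cycle supported on the residual complex and controls capacities separately.
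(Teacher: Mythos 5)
Your proof is correct and follows essentially the same route as the paper: linearity of $\sigma\mapsto\partial\sigma$ gives the weighted-cycle condition, and positivity is immediate pointwise. Your added remark that the capacity constraint is deliberately not claimed is accurate and consistent with how the lemma is used afterwards.
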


\begin{lem}\label{notmax} A flow is not maximal if there exists an augmenting cycle.
\begin{proof}
Let $A= \sum c_i X_i$ be an augmenting cycle with $X_i \in X_f$ and $c_i$ positive. Let $m=\min \{ \frac{1}{c_i} c_f(X_i) \}$. Define a flow as $ \bar f(X)=f(X)+c_im$ if $X=X_i $ for some $i$, $\bar f(X)=f(X)-c_im $ if $X=-X_i$ for some i and $\bar f(X)=f(X) $ else. Now define $B=\{ X \in X^{(d)} : X=X_i \mbox{or} X=-X_i\} $.

$$
\sum_{ X \in X^{(d)}} \bar f(X) \partial(X)=\sum_{X \in X^{(d)}\backslash B} f(X) \partial(X) + \sum_{X \in B} \bar f(X) \partial (X) 
$$
$$
=\sum_{X \in X^{(d)}} f(X) \partial (X) + \sum_{X\in B} \pm c_im \partial(X) = m \partial \left( \sum_i c_i X_i \right )  = 0
$$

Capacity constraints hold by definition of $m$. As $X_i=T$ for some $i$ the value of the flow is strictly increased.

\end{proof}
\end{lem}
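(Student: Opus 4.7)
My plan is to construct an explicit augmented flow $\tilde f$ that strictly exceeds $f$ at the source simplex $T$, mimicking the classical augmenting-path argument of Lemma~\ref{nopath} but in chain-level language.

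First I would unpack the data of the augmenting cycle. Write $A=\sum_i c_i X_i$ with $c_i\in\Z_{>0}$, each $X_i$ in the residual complex $X_f$ (so the residual capacity $c_f(X_i)=\bar c(X_i)-\bar f(X_i)$ is strictly positive), and $X_{i_0}=T$ for some index $i_0$. Because the $X_i$ are oriented facets of $X$ or their reverses, I pass to the extended functions $\bar f,\bar c$ throughout, since the key identity $\bar f(-X)=-\bar f(X)$ is what will let me treat a ``reverse'' facet in $A$ as a decrement of the original flow value.

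Next I would pick the augmentation step $m=\min_i c_f(X_i)/c_i$, which is strictly positive by assumption. Then I define a new candidate
\[
\tilde f(Y)=\bar f(Y)+m\!\!\sum_{i:\,X_i=Y}\!\!c_i-m\!\!\sum_{i:\,X_i=-Y}\!\!c_i
\]
for each oriented facet $Y\in X^{(d)}$. Three things need to be checked. (i) Nonnegativity and capacity: by the choice of $m$, for each $X_i$ we have $\bar f(X_i)+c_i m\le \bar c(X_i)$, and the antisymmetry of $\bar f$ and the nonnegativity of the original values handle the reverse-orientation case symmetrically; so $0\le \tilde f(Y)\le c(Y)$. (ii) Cycle condition: compute
\[
\sum_{Y\in X^{(d)}}\tilde f(Y)\,\partial Y=\sum_{Y}\bar f(Y)\,\partial Y + m\sum_i c_i\,\partial X_i = 0 + m\,\partial A = 0,
\]
where the first term vanishes because $f$ is already a flow and the second because $A$ is a $d$-cycle by Definition~\ref{augmentingcycle}. (iii) Strict improvement: since $X_{i_0}=T$ and $T$ is a facet with positive orientation, the coefficient of $m$ in $\tilde f(T)-f(T)$ is at least $c_{i_0}>0$, giving $\tilde f(T)>f(T)$.

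The main technical obstacle is the sign bookkeeping in step (i)--(ii): some $X_i$ may coincide with $-Y$ rather than $Y$ for an oriented facet $Y$, and one must verify that the prescription $\tilde f$ above is well-defined on facets (independent of the choice of representative orientation) and still produces a nonnegative function. This is handled cleanly by working throughout with $\bar f$ on $X^{(d)}\cup -X^{(d)}$ and only translating back to $f$ at the end; once that is set up, the rest is the formal identity $\partial A=0$.
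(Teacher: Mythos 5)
Your proposal is correct and follows essentially the same route as the paper: augment by $m=\min_i c_f(X_i)/c_i$ along the cycle, use $m\,\partial A=0$ to verify the weighted-cycle condition, invoke the minimality of $m$ for the capacity bounds, and observe that the presence of $T$ among the $X_i$ forces a strict increase of $f(T)$. Your explicit indexed-sum formulation of the updated flow is a slightly more careful rendering of the paper's case definition, but it is not a different argument.
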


We would like to prove the converse of this lemma to devise a first algorithm for Higher Maxflow optimization. 

\begin{conj}\label{conjet} The converse of lemma (\ref{notmax}) holds in general.

\end{conj}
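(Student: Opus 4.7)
The approach is a direct generalization of the Ford-Fulkerson argument (that an augmenting path exists whenever a flow is not maximal), combined with a rationality step to meet the positive-integer-coefficient requirement in Definition \ref{augmentingcycle}.

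I would first extract, from a strictly larger flow, a real-valued nonnegative cycle supported on the residual complex. Assume $f$ is not maximal and let $f^*$ be a flow with $f^*(T)>f(T)$. For each oriented facet $\sigma\in X^{(d)}$, set
\[
g(\sigma)=\max\bigl(f^*(\sigma)-f(\sigma),\,0\bigr), \qquad g(-\sigma)=\max\bigl(f(\sigma)-f^*(\sigma),\,0\bigr),
\]
so that $g\ge 0$, at most one of $g(\sigma),g(-\sigma)$ is nonzero, and $g(\sigma)-g(-\sigma)=f^*(\sigma)-f(\sigma)$. I would then verify three facts: (i) $g$ is supported on $X_f$, because $g(\sigma)>0$ implies $c_f(\sigma)\ge f^*(\sigma)-f(\sigma)>0$ and $g(-\sigma)>0$ implies $c_f(-\sigma)=f(\sigma)>0$; (ii) $g$ is a weighted cycle, since $\partial(-\sigma)=-\partial(\sigma)$ and the cycle conditions on $f,f^*$ give
\[
\sum_{x\in X^{(d)}\cup(-X^{(d)})} g(x)\,\partial(x) = \sum_{\sigma\in X^{(d)}} \bigl(f^*(\sigma)-f(\sigma)\bigr)\,\partial(\sigma) = 0;
\]
and (iii) $g(T)>0$, since $g(T)-g(-T)=f^*(T)-f(T)>0$ while $g(-T)\ge 0$.

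The second step is to pass from the real cycle $g$ to one with positive integer coefficients. Let $Y=\operatorname{supp}(g)\subseteq X_f$. The space of real cycles supported on $Y$ is the kernel of $\partial_d|_{\R^Y}$, hence a rational linear subspace $V\subseteq\R^Y$ (since $\partial_d$ has integer entries). The open positive orthant $\R_{>0}^Y$ meets $V$ in a relatively open subset that contains $g$ and is therefore nonempty. Rational points are dense in any rational linear subspace, so $V\cap\R_{>0}^Y$ contains a rational cycle $h$; clearing denominators yields $\sigma=\sum_{X\in Y}c_X X$ with $c_X\in\Z_{>0}$ and $\partial\sigma=0$. Since $T\in Y$, the facet $T$ appears with positive integer coefficient, and $\sigma$ is an augmenting cycle in the sense of Definition \ref{augmentingcycle}.

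The main obstacle is precisely this integrality step. Unlike the graph case, there is no higher-dimensional analogue of Theorem \ref{flowdecomp} available to produce the augmenting cycle combinatorially as a sum of ``simple'' cycles; in dimension $\ge 2$ a positive real cycle can fail to decompose into positive sums of ``atomic'' cycles at all. The rationality argument above circumvents this by using only the rational polyhedral structure of the cone of nonnegative cycles on $X_f$. As a sanity check, note that this argument never invokes any higher Max-Flow/Min-Cut duality (which is known to be subtler than in the one-dimensional case), and it gives no bound on the size of the integer coefficients of $\sigma$, so even granting the conjecture it does not by itself furnish a polynomial-time augmenting-cycle algorithm.
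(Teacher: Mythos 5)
The paper does not actually prove this statement: it is posed as a conjecture, and the closing line of the section records that it ``remains without answer.'' So there is no proof of record to compare yours against; your argument has to stand on its own, and as far as I can check it does. Both steps are sound. The difference $f^*-f$ is a weighted cycle because the cycle condition is linear in $f$, and splitting it into positive and negative parts distributed over $X^{(d)}\cup(-X^{(d)})$ places the support of $g$ inside $X_f$ exactly as in the Ford--Fulkerson argument: $f^*(\sigma)>f(\sigma)$ forces $c_f(\sigma)\geq f^*(\sigma)-f(\sigma)>0$ by feasibility of $f^*$, and $f(\sigma)>f^*(\sigma)\geq 0$ forces $c_f(-\sigma)=f(\sigma)>0$. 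The integrality step is the genuinely new content, and your observation that Definition \ref{augmentingcycle} asks only for \emph{one} positive integer cycle supported on $X_f$ and containing $T$ --- not for a decomposition into elementary cycles, which is what can fail in dimension at least two --- is exactly what makes the rational-density argument sufficient: $\ker(\partial_d|_{\R^Y})$ is a rational subspace, it meets the open orthant $\R^Y_{>0}$ in a nonempty relatively open set containing $g|_Y$, hence in a rational point, and clearing denominators produces the required cycle; $T$ lies in $Y$ automatically because $c(T)=\infty$ makes $T$ residual and $g(T)=f^*(T)-f(T)>0$.

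Two details are worth making explicit in a write-up. First, confirm that $Y$ never contains both $\sigma$ and $-\sigma$ (it does not, by the $\max(\cdot,0)$ construction); otherwise ``positive integer coefficients'' would become ambiguous after cancellation in $C_d$. Second, read ``not maximal'' as ``there exists a feasible flow $f^*$ with $f^*(T)>f(T)$,'' so that you never need to invoke the existence of an actual maximizer and the unboundedness of $c(T)$ plays no role. With those points spelled out, your argument upgrades Conjecture \ref{conjet} to a theorem and yields the exact higher-dimensional analogue of Lemma \ref{nopath}, although, as you note, it is purely existential: it gives no bound on the coefficients $c_i$ and hence no augmenting-cycle algorithm by itself.
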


\begin{figure}[h!]
\begin{center}
\includegraphics[scale=0.7]{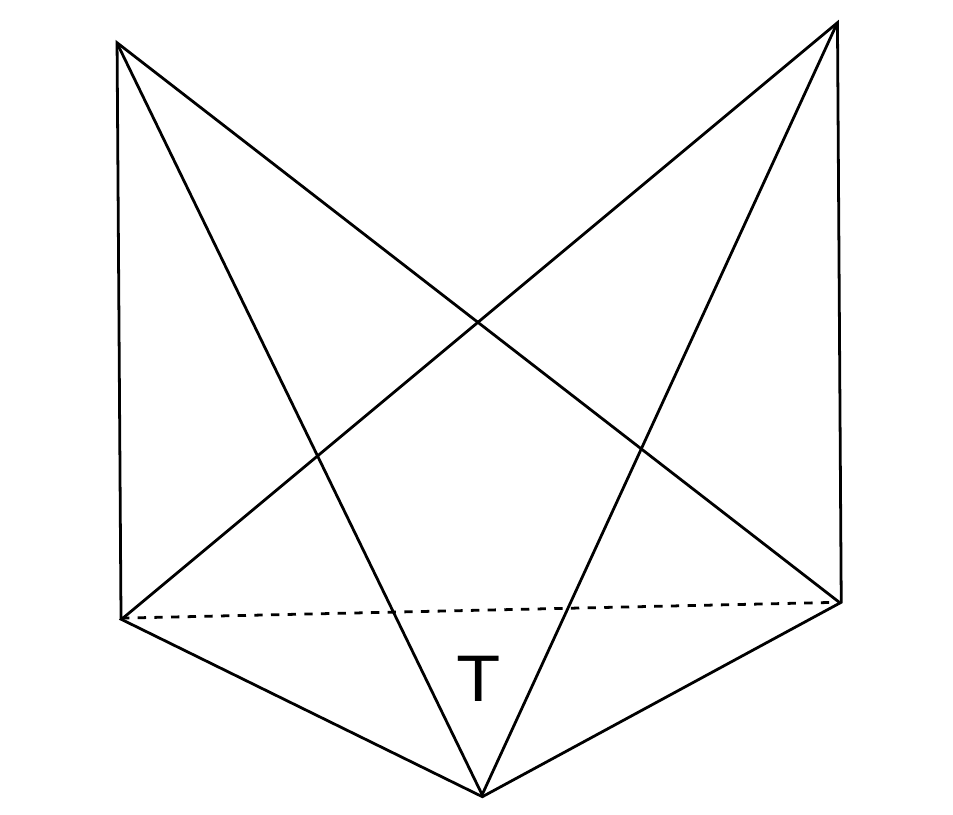} 
\caption{Oriented simplicial complex, the double tetrahedron $Sim_{2,2}$}
\end{center}
\end{figure} 

{\bf Example} Consider two tetrahedra oriented by the outward normal, with a common facet $T$, over the vertex set $\{ 1,2,3,4,5 \}$ We find the incidence matrix to be\\
$$
\left[ \begin{array}{ccccccc}
0 & 0 & 1 & 0 & 0 & 1 & -1 \\
-1 & 0 & 0 & -1 & 0 & 0 & 1 \\
0 & 0 & 0 & 1 & 0 & -1 & 0 \\
1 & 0 & -1 & 0 & 0 & 0 & 0 \\
0 & 1 & 0 & 0 & 1 & 0 & -1 \\
0 & 0 & 0 & 0 & -1 & 1 & 0 \\
0 & -1 & 1 & 0 & 0 & 0 & 0 \\
0 & 0 & 0 & -1 & 1 & 0 & 0 \\
-1 & 1 & 0 & 0 & 0 & 0 & 0 \\
\end{array} \right ]
$$

This matrix is also totally unimodular hence the problem has integer optima. In various cases we find this optima to be the sum of the minimum capacities over each tetrahedron. This optimum is achieved after finding two augmenting cycles corresponding to each tetrahedron. When the flow is maximum there is no augmenting cycle on the residual path. 

\begin{que}\label{uni} When is the incidence matrix of a $d$ dimensional network totally unimodular?

\end{que}

We will cite some important theorems that will give us some insight about the status of the question (\ref{uni}), and ultimately show that it is in fact false. We find certain family of networks where it holds.\\

The map $\partial_d$ has a unique matrix representation with respect of chosen basis. We denote such matrix as $[\partial_d]$. In fact it coincides with the incidence matrix we have defined. The kernel of $\partial_d$ is called the group of $p$-cycles and is denoted by $Z_d(X)$. The image of the map $\partial_{d+1}$, denoted by $B_d(X)$ is a subgroup of $C_d$ called the $d$-boundaries. We have that for any d $\partial_d \partial_{d+1}=0$ under composition so that $B_d \subset Z_d $. The $d$-dimensional homology group is defined as the quotient $H_d:=Z_d /B_d$. For a simplicial couplex $X$ we have that $Z_d(X), B_d(X), H_d(X)$ are finitely generated abelian groups and $H_d(X)$ depends only on the homotopy type of $X$.\\

For a subcomplex $X_0 \subset X$ we define the \emph{group of relative chains} of $X$ modulo $X_0$ as the quotient $C_d(X)/C_d(X_0):=C_d(X,X_0)$. The map $\partial_d$ induces a map $\partial_d^{(X,X_0)} : C_d(X,X_0) \rightarrow C_{d-1}(X,X_0)$ which also satisfies $\partial_d^{(X,X_0)} \partial_{d+1}^{(X,X_0)}=0 $  So we define the \emph{relative homology groups} as $H_d(X,X_0)=ker(\partial_d^{(X,X_0)}) / Im(\partial_{d+1}^{(X,X_0)}) := Z_d(X,X_0)/B_d(X,X_0)$. 

The following is a result by Dey-Hirani-Krishnamoorthy that gives a partial result to our question:\\

\begin{teo}\cite{Dey} For a finite simplicial complex triangulating a $d+1$ dimensional compact orientable manifold, $[\partial_{d+1}]$ is totally unimodular irrespective of the orientations of the simplices.

\end{teo}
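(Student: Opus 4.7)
The plan is to reduce the statement to the classical fact that the vertex-edge incidence matrix of a directed graph is totally unimodular, by exploiting the orientability of the manifold to pick a coherent orientation on the top-dimensional simplices. The starting observation is that total unimodularity is preserved under multiplying any row or column by $-1$, since this only changes the sign of every square subdeterminant. Reversing the chosen orientation of a $(d+1)$-simplex multiplies the corresponding column of $[\partial_{d+1}]$ by $-1$, and reversing the orientation of a $d$-face multiplies the corresponding row by $-1$, so total unimodularity is independent of the orientations chosen. Hence it suffices to verify the claim for one convenient orientation, and we are free to pick whichever is most useful.

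Next I would use orientability to install a \emph{coherent} orientation on all the $(d+1)$-simplices: namely, one for which the chain $\sum_\sigma \sigma$ (summed over all top-dimensional simplices with coefficient $+1$) represents the fundamental class of the manifold. The defining property of such an orientation is that, for every interior $d$-face $\tau$ (contained in exactly two top-simplices $\sigma_1$ and $\sigma_2$, by the manifold property), $\tau$ appears with opposite signs in $\partial \sigma_1$ and in $\partial \sigma_2$, so that these contributions cancel in $\partial(\sum \sigma)$. Boundary $d$-faces are contained in only one top-simplex and contribute a single $\pm 1$.

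In the coherent orientation, the matrix $[\partial_{d+1}]$ therefore has at most two nonzero entries in each row, and whenever there are two they have opposite signs. Transposing, each column of $[\partial_{d+1}]^T$ has entries in $\{-1,0,1\}$ with at most two nonzero entries of opposite sign. This is precisely the structure of the vertex-edge incidence matrix of a directed graph $D$ whose vertices are the $(d+1)$-simplices, whose (interior) $d$-faces are the directed edges, and whose orientations are inherited from the sign pattern; boundary $d$-faces contribute columns with a single $\pm 1$, which may be handled by adding a ``phantom'' vertex or absorbed trivially into the argument.

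The final step is to invoke the classical theorem (Poincaré, or the Heller--Tompkins / Ghouila-Houri criterion) that the incidence matrix of a directed graph is totally unimodular; equivalently, one may prove this by induction on the size of a square submatrix: every such submatrix either contains a zero row or column, has a row with a single $\pm 1$ (in which case expand along that row), or every row has two nonzero entries summing to $0$, forcing the rows to be linearly dependent and the determinant to be $0$. Since total unimodularity is preserved under transposition, the conclusion for $[\partial_{d+1}]^T$ transfers back to $[\partial_{d+1}]$. The main obstacle—really only a matter of careful bookkeeping—is justifying the existence of the coherent orientation from orientability, which is standard in algebraic topology and should be cited rather than reproved, together with checking that boundary $d$-faces do not disrupt the reduction to a digraph incidence matrix.
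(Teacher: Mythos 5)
The paper itself gives no proof of this statement---it is quoted from \cite{Dey} as a black box---so there is nothing internal to compare against; what you have written is, for practical purposes, the proof from that reference, and it is correct. The three ingredients are all sound: total unimodularity is invariant under multiplying rows or columns by $-1$ (so the ``irrespective of orientations'' clause reduces to checking one convenient orientation); orientability of the compact manifold supplies a coherent orientation of the top simplices, under which every interior $d$-face lies in exactly two $(d+1)$-simplices with opposite induced signs, so every row of $[\partial_{d+1}]$ has at most two nonzero entries and, when there are two, they are $+1$ and $-1$; and a $\{0,\pm1\}$-matrix whose transpose has at most one $+1$ and one $-1$ per column is (after appending a phantom row to absorb the boundary faces, which does not affect total unimodularity of submatrices) the incidence matrix of a digraph, hence totally unimodular by the classical Poincar\'e argument. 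Two small points of bookkeeping: in the final case of your induction (every row of the square submatrix containing one $+1$ and one $-1$) the immediate conclusion is that the matrix kills the all-ones vector, i.e.\ the \emph{columns} are dependent, which of course still gives determinant zero; and if the manifold is disconnected the coherent orientation is chosen componentwise, which changes nothing. Your proof is more informative than the paper's citation, and it also makes transparent why the hypothesis of orientability is needed---it is exactly what rules out the projective-plane counterexample the paper mentions immediately afterwards.
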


The answer to question (\ref{uni}) is ``not always''. Consider the following counterexample:

{\bf Counterexample: }\cite{Dey} For certain simplicial complex triangulating the projective plane, the matrix $[\partial_2]$ is not totally unimodular.\\

This might not be exactly a counterexample for our conjecture as we need to prove existence of some facet that may work as source. However if we consider the two dimensional sphere positively oriented, and a triangulation of such manifold with consistent choice of orientations, then we may define any facet of such complex as the source, and then joining this complex to the triangulation of the projective plane by a vertex we find a submatrix of $[\partial_2]$ that is not totally unimodular.\\

The following is a theorem also due to Dey-Hirani-Krishnamoorthy that characterizes totally unimodular matrices arising from a boundary operator.

\begin{teo} \cite{Dey} \label{toed} $[\partial_d+1]$ is totally unimodular if and only if $H_d(Y,Y_0)$ is torsion free, for all pure subcomplexes $Y,Y_0$ of $X$ of dimensions $d+1$ and $d$ respectively, where $Y_0 \subset Y$.

\end{teo}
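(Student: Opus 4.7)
The plan is to chain together three equivalent conditions: total unimodularity of $[\partial_{d+1}]$, torsion-freeness of $\operatorname{coker}(N')$ for every square submatrix $N'$ of $[\partial_{d+1}]$, and torsion-freeness of each $H_d(Y,Y_0)$. For the first equivalence, a submatrix of a TU matrix is itself TU, so all its $k$-minors lie in $\{-1,0,1\}$ and the elementary divisors $d_i=\gcd(i\text{-minors})/\gcd((i-1)\text{-minors})$ all equal $1$, making the cokernel torsion-free; conversely, if every square submatrix has torsion-free cokernel then a nonsingular submatrix satisfies $|\det|=$ order of trivial torsion $=1$, while singular ones automatically have $\det=0$.

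\emph{Dictionary between square submatrices and pairs $(Y,Y_0)$.} Given a square submatrix of $[\partial_{d+1}]$ specified by column set $A\subset X^{(d+1)}$ and row set $B\subset X^{(d)}$ with $|A|=|B|$, let $Y$ be the pure $(d+1)$-subcomplex obtained by closing $A$ under subfaces and $Y_0$ the pure $d$-subcomplex generated by $(d\text{-faces of }Y)\setminus B$. Then $Y_0\subset Y$, $C_{d+1}(Y,Y_0)=\Z\langle A\rangle$, $C_d(Y,Y_0)=\Z\langle B\cap(d\text{-faces of }Y)\rangle$, and the matrix of $\partial_{d+1}^{(Y,Y_0)}$ is exactly the chosen submatrix after deleting any identically-zero rows (those corresponding to $\tau\in B$ which is not a face of any $\sigma\in A$; such rows force $\det=0$ automatically). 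Conversely, every pair $(Y,Y_0)$ arises this way.

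\emph{From cokernel to relative homology.} Because $C_d(Y,Y_0)/Z_d(Y,Y_0)$ embeds in $C_{d-1}(Y,Y_0)$ as $\operatorname{im}(\partial_d^{(Y,Y_0)})$, it is a subgroup of a free abelian group and hence itself free abelian. The exact sequence
\[
0\longrightarrow H_d(Y,Y_0)\longrightarrow \operatorname{coker}\bigl(\partial_{d+1}^{(Y,Y_0)}\bigr)\longrightarrow C_d(Y,Y_0)/Z_d(Y,Y_0)\longrightarrow 0
\]
has a free abelian quotient, so the middle term is torsion-free if and only if $H_d(Y,Y_0)$ is (a subgroup of a torsion-free group is torsion-free, and an extension with torsion-free kernel and torsion-free quotient is itself torsion-free).

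Chaining these three equivalences completes both directions. The principal obstacle is the dictionary in the second paragraph: the construction must realize every square submatrix of $[\partial_{d+1}]$ (since total unimodularity is a condition on all minors simultaneously), and degenerate cases---for instance $B$ exhausting all $d$-faces of $Y$, so that the natural $Y_0$ is empty rather than pure of dimension $d$---require a mild convention admitting the empty complex as a pure subcomplex. The first and third paragraphs are routine Smith-Normal-Form and chain-complex manipulations, respectively.
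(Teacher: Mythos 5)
This theorem is imported into the paper from Dey--Hirani--Krishnamoorthy \cite{Dey} and is stated without proof, so there is no in-paper argument to compare against; what you have written is in substance the proof from that reference (Smith normal form to translate total unimodularity into torsion-freeness of cokernels, identification of relative boundary matrices with submatrices of $[\partial_{d+1}]$, and the split short exact sequence $0 \to H_d(Y,Y_0) \to C_d(Y,Y_0)/B_d(Y,Y_0) \to C_d(Y,Y_0)/Z_d(Y,Y_0) \to 0$, whose last term is free because it embeds in $C_{d-1}(Y,Y_0)$). Two points should be tightened. First, your sentence ``every pair $(Y,Y_0)$ arises this way'' is not literally true for \emph{square} submatrices: the relative boundary matrix of an arbitrary pair has one column per facet of $Y$ and one row per $d$-face of $Y$ outside $Y_0$, and these counts need not agree. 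The argument survives because your elementary-divisor computation in the first paragraph applies verbatim to rectangular submatrices, so the direction ``TU implies all $H_d(Y,Y_0)$ torsion-free'' should be routed through general submatrices, reserving the square-submatrix dictionary for the converse only. Second, the degenerate case $Y_0=\emptyset$ that you flag is genuinely needed in the converse: when $B$ exhausts the $d$-faces of $Y$ one is detecting torsion in the absolute group $H_d(Y)$, and the statement as transcribed here, requiring $Y_0$ to be pure of dimension $d$, silently excludes that case. This is a defect of the transcription rather than of your argument, but without the convention admitting $Y_0=\emptyset$ the converse direction has a hole.
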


The following theorem yields another family of simplicial complexes where total unimodularity holds, namely, the family of $d$ dimensional complexes embeddable in $\R^d$.

\begin{teo}\cite{Dey} Let $K$ be a finite simplicial complex embedded in $\R^{d+1}$, then $H_d(L,L_0)$ is torsion free for all pure subcomplexes $L_0$ and $L$ of dimensions $d$ and $d+1$ respectively.

\end{teo}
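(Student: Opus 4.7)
The plan is to apply Alexander--Lefschetz duality to the pair $(L, L_0)$ inside the one-point compactification $S^{d+1}=\R^{d+1}\cup\{\infty\}$, thereby reducing the torsion-freeness of $H_d(L, L_0)$ to a computation of $H_0$ of the complementary pair, which is automatically free abelian. Since $K\subset\R^{d+1}$, every subcomplex embeds there as well, so $L_0\subset L$ forms a compact CW pair in $S^{d+1}$ and the duality applies.

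First I would perform a universal coefficient reduction. The UCT gives
$$H^{d+1}(L, L_0;\Z)\;\cong\;\mathrm{Hom}\bigl(H_{d+1}(L, L_0),\Z\bigr)\oplus\mathrm{Ext}\bigl(H_d(L, L_0),\Z\bigr),$$
and $\mathrm{Ext}(H_d(L,L_0),\Z)$ is precisely the torsion subgroup of $H_d(L,L_0)$. Since $\mathrm{Hom}(-,\Z)$ of any finitely generated abelian group is torsion-free, the torsion of $H^{d+1}(L, L_0)$ coincides with the torsion of $H_d(L,L_0)$. Consequently it suffices to prove $H^{d+1}(L,L_0)$ is torsion-free.

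Next I would invoke Alexander--Lefschetz duality in the form
$$H^{d+1}(L, L_0)\;\cong\;H_0\bigl(S^{d+1}-L_0,\; S^{d+1}-L\bigr),$$
obtained by applying the classical Alexander isomorphism $\tilde H^{k}(X)\cong\tilde H_{d-k}(S^{d+1}-X)$ naturally to the inclusion $L_0\hookrightarrow L$ and matching the long exact sequences of the pair $(L,L_0)$ in cohomology and of $(S^{d+1}-L_0,\,S^{d+1}-L)$ in homology. The long exact sequence of the complementary pair then exhibits the right-hand side as the cokernel of $H_0(S^{d+1}-L)\to H_0(S^{d+1}-L_0)$; since this map carries each path-component of $S^{d+1}-L$ into a unique path-component of $S^{d+1}-L_0$, its image is spanned by a subset of the canonical basis of the free abelian group $H_0(S^{d+1}-L_0)$. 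The cokernel is therefore free abelian, so $H^{d+1}(L,L_0)$ is torsion-free, and by the UCT step so is $H_d(L,L_0)$.

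The main obstacle I anticipate is pinning down the precise form of Alexander--Lefschetz duality for a compact CW pair (as opposed to a manifold pair) embedded in $S^{d+1}$: standard references phrase the general statement in terms of \v{C}ech cohomology, and one must check that for simplicial pairs this agrees with singular cohomology, as well as carry out the naturality diagram chase needed to pass from duality of the individual terms to duality of the pair. Once this is in place the rest is formal, the essential geometric input being the codimension-zero embedding hypothesis, which places the pair inside a sphere of dimension exactly one more than the homology degree under analysis.
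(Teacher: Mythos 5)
The paper does not actually prove this statement; it is quoted without proof from Dey--Hirani--Krishnamoorthy \cite{Dey}, and your argument is in essence the one given in that reference: reduce the torsion of $H_d(L,L_0)$ to the torsion of $H^{d+1}(L,L_0)$ via the universal coefficient theorem, identify the latter group with $H_0(S^{d+1}-L_0,\,S^{d+1}-L)$ by relative Alexander duality in the one-point compactification, and observe that this is the cokernel of a map of free abelian groups carrying basis elements to basis elements, hence free. All three steps are correct, and the \v{C}ech-versus-singular issue you flag is harmless for compact simplicial pairs. Note only that your argument never uses the stated dimensions of $L$ and $L_0$, so it proves the slightly stronger fact that $H_d(A,B)$ is torsion-free for every compact simplicial pair $(A,B)$ embedded in $\R^{d+1}$.
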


It would be helpful to find more general families of simplicial complexes where unimodularity holds. Using theorem (\ref{toed}) we may give an alternative proof of the fact that for graphs, the incidence matrix is totally unimodular.

\begin{teo} For a directed graph $G$, the incidence matrix $[\partial_1]$ is totally unimodular. 
\begin{proof}
Suppose $G$ is connected. Let $L_0 \subset L$ be two subcomplexes of $G$ of dimension $0$ and $1$ respectively. We want to show that $H_0(L,L_0)$ is totally unimodular. Now $(L,L_0)$ is a \emph{good pair}, this implies that $H_0(L,L_0) \simeq H_0(L / L_0 ) $ which is a torsion free group isomorphic to $\mathbb{Z}^k$ where k is the number of connected components of $L/L_0$. The result follows by theorem (\ref{toed}).\\
\end{proof}

\end{teo}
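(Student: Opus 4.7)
The plan is to apply Theorem \ref{toed} in the case $d=0$, which reduces the claim to showing that for every pair of pure subcomplexes $L_0 \subset L$ of $G$ with $\dim L_0 = 0$ and $\dim L = 1$, the relative homology group $H_0(L,L_0)$ is torsion free. Once this is established, total unimodularity of $[\partial_1]$ follows directly from the cited characterization.

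First, I would reduce the relative homology to ordinary homology of a quotient. Since $(L,L_0)$ is a simplicial (hence CW) pair, it is a good pair in the sense of Hatcher, so one has a canonical isomorphism $H_0(L,L_0) \cong \widetilde{H}_0(L/L_0)$. This is the standard step that allows one to replace the relative object by something concrete.

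Second, I would invoke the elementary fact that zeroth singular homology of any space is free abelian: if $L/L_0$ has $k$ path components, then $\widetilde{H}_0(L/L_0) \cong \mathbb{Z}^{k-1}$. In particular it has no torsion. Combined with the previous step, this gives that $H_0(L,L_0)$ is torsion free for every allowable pair.

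Finally, I would assemble the pieces: by Theorem \ref{toed} applied with $d=0$ to the one-dimensional simplicial complex $G$, the torsion-freeness of all such $H_0(L,L_0)$ is equivalent to total unimodularity of $[\partial_1]$. There is no real obstacle here; the only thing one has to be slightly careful about is the assumption that $L$ and $L_0$ be \emph{pure} of the prescribed dimensions, but this is automatic from how the theorem's hypotheses are stated, and does not affect the homological argument because we never used anything beyond the good-pair property. One small stylistic remark: the author's assumption that $G$ be connected is unnecessary for the argument, since the homological facts above hold without it.
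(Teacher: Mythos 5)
Your proposal is correct and follows essentially the same route as the paper: invoke the good-pair isomorphism to identify $H_0(L,L_0)$ with the (reduced) zeroth homology of $L/L_0$, observe that this is free abelian, and conclude by Theorem (\ref{toed}). Your version is in fact slightly more careful, since you correctly use $\widetilde{H}_0(L/L_0)\cong\mathbb{Z}^{k-1}$ where the paper writes the unreduced group, and you rightly note that the connectedness assumption plays no role.
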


\begin{defi}
Let $X$ be a simplicial complex. A pair of facets $(F,F')$ of $X$ is a leaf if for every face $H$ of $X$ we have $F \cap H \subset F'$. A simplicial tree is a simplicial connected complex $X$ such that every subset of facets of $X$ contains at least one leaf.
\end{defi}

\begin{lem}\label{inyec} Let $X$ be a pure simplicial complex of dimension 2, and suppose that for any pure sub complex of dimension 2 $L \subset X$, $H_1(L)=0$, then $[\partial_2]$ is totally unimodular.
\begin{proof} Let $L_0 \subset L $ be pure sub complexes of dimension 1 and 2 respectively. We have a long exact sequence in homology
$$
\ldots \rightarrow H_1(L) \rightarrow H_1(L,L_0) \rightarrow H_0(L_0)
$$
By exactness of the sequence we have that $H_1(L,L_0) \rightarrow H_0(L_0)$ is an invective map. As $H_0(L_0)$ is torsion free so is $H_1(L,L_0) $. The result follows by theorem (\ref{toed}).

\end{proof}

\end{lem}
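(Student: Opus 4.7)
The plan is to apply Theorem \ref{toed} with $d=1$, which reduces the claim to showing that $H_1(L,L_0)$ is torsion free for every pair of pure subcomplexes $L_0 \subset L$ of $X$ of dimensions $1$ and $2$ respectively. Once this relative torsion-freeness is verified for all such pairs, total unimodularity of $[\partial_2]$ follows directly from the cited characterization, so the entire argument collapses to a controlled computation of $H_1(L,L_0)$.

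To control $H_1(L,L_0)$ I would invoke the long exact sequence in simplicial homology of the pair $(L,L_0)$, which includes the segment
$$H_1(L) \longrightarrow H_1(L,L_0) \longrightarrow H_0(L_0) \longrightarrow H_0(L).$$
Since $L$ is by construction a pure 2-dimensional subcomplex of $X$, the standing hypothesis of the lemma applies and gives $H_1(L)=0$. Exactness at $H_1(L,L_0)$ then forces the connecting homomorphism $H_1(L,L_0) \to H_0(L_0)$ to be injective, so $H_1(L,L_0)$ embeds as a subgroup of $H_0(L_0)$.

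To conclude, I would use the standard fact that $H_0$ of a simplicial complex is a free abelian group whose rank equals the number of its connected components, so $H_0(L_0) \cong \Z^{k}$ for some $k \geq 0$ and is in particular torsion free. Since any subgroup of a torsion-free abelian group is itself torsion free, the injection above shows that $H_1(L,L_0)$ is torsion free, which is exactly the hypothesis required to apply Theorem \ref{toed} and deduce that $[\partial_2]$ is totally unimodular.

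The only subtlety I anticipate is confirming that the lemma's hypothesis is compatible with the quantification needed in Theorem \ref{toed}: the assumption constrains only absolute $H_1$ of pure 2-dimensional subcomplexes, but this is precisely the quantification over which $L$ ranges in the long exact sequence, so no extra input is needed. Beyond this matching, the argument is a short diagram chase; the genuine content is packaged into the hypothesis itself, which is a nontrivial geometric constraint on $X$ forbidding any nonbounding $1$-cycle in a pure 2-dimensional piece.
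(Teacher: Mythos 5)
Your proposal is correct and is essentially identical to the paper's own proof: both reduce to Theorem (\ref{toed}), use the long exact sequence of the pair $(L,L_0)$ together with the hypothesis $H_1(L)=0$ to embed $H_1(L,L_0)$ into $H_0(L_0)$, and conclude torsion-freeness from that of $H_0(L_0)$. Your write-up is in fact slightly more careful than the paper's, since you explicitly note that $H_0(L_0)$ is free abelian and that subgroups of torsion-free abelian groups are torsion free.
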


\begin{lem} Let $X$ be a pure simplicial complex of dimension 2 such that there exist disjoint facets $\{T_1, \ldots, T_m\}$ such that $X \backslash \{ T_1, \ldots, T_m \} $ is a simplicial tree. Then for any pure subcomplex $L \subset X $ of dimension 2, $H_1(L)=0$.
\begin{proof} Let $L$ be a pure simplicial complex of dimension 2 with facets $\{F_1, \ldots, F_l \}$. Define $V$ as the pure simplicial complex with facets $\{F_1, \ldots, F_l\} \backslash \{T_1, \ldots, T_,\}$. By assumption, $V$ is a two dimensional subcomplex of a two dimensional simplicial tree so it is also a simplicial tree. Any simplicial tree is contractible and homology groups are homotopy invariant. Now for any $F$ facet of $L$ that is not in $V$ we have two cases: Every one dimensional face of $F$ is in $V$ or  not. In the first case, after contracting $V$, $F$ forms a sphere. In the second case $F$ has at least a one dimensional facet not in $V$ and $F$ can be contracted to $F \cap V$. Hence after contracting $V$ we see that $L$ is homotopy equivalent to a wedge of spheres so that $H_1(L)=0$.

\end{proof}

\end{lem}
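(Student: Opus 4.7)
The plan is to exploit the fact that simplicial trees are contractible together with the fact that the extra facets $T_1,\ldots,T_m$ are pairwise disjoint and hence attach to the tree part only through their boundaries. Given a pure 2-dimensional subcomplex $L \subset X$ with facets $\{F_1,\ldots,F_l\}$, I would partition these facets into two groups: those lying in the ambient simplicial tree $X \setminus \{T_1,\ldots,T_m\}$, and those that coincide with some $T_i$. Let $V \subset L$ be the pure 2-dimensional subcomplex generated by the first group, and let $T_{i_1},\ldots,T_{i_r}$ denote the members of the second group that appear in $L$.

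The first step is to verify that $V$ inherits the simplicial-tree structure of $X \setminus \{T_1,\ldots,T_m\}$, possibly as a disjoint union of trees. Any subset of facets of $V$ is also a subset of facets of $X \setminus \{T_1,\ldots,T_m\}$, so by the tree hypothesis it contains a leaf pair $(F,F')$; since faces of $V$ are among faces of $X$, the leaf condition ``$F\cap H \subset F'$ for every face $H$'' restricts automatically to $V$. Each connected component of $V$ is therefore a simplicial tree, hence contractible, and in particular $H_1(V)=0$.

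Next I would analyze how each extra triangle $T_{i_k}$ is glued. Because the $T_i$'s are pairwise disjoint, no edge of $T_{i_k}$ is shared with any other $T_{i_l}$ in $L$, so every edge of $T_{i_k}$ that lies in $L$ but not in $V$ is a free edge of $T_{i_k}$ in $L$. Collapsing each component of $V$ to a point gives a homotopy equivalence $L \simeq L'$ by the standard CW-pair argument applied to each contractible component. In $L'$, the image of each $T_{i_k}$ is the quotient $T_{i_k}/(T_{i_k} \cap V)$: if $\partial T_{i_k} \subset V$ this becomes a $2$-sphere attached at a point, and otherwise it is a closed disk with a proper subcomplex of its boundary collapsed, which is contractible. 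Therefore $L'$ is homotopy equivalent to a wedge of $2$-spheres together with contractible attachments, and $H_1(L)=H_1(L')=0$.

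The main obstacle is the case analysis for the quotients $T_{i_k}/(T_{i_k} \cap V)$ when $T_{i_k}$ meets several connected components of $V$, since collapsing can identify distinct points on $\partial T_{i_k}$ and potentially create loops. A cleaner alternative would be to invoke the Mayer--Vietoris sequence for the decomposition $L = V \cup \bigl(\bigcup_k T_{i_k}\bigr)$, using $H_1(V)=0$, $H_1(T_{i_k})=0$, and the fact that the pairwise intersections $V \cap T_{i_k}$ are proper subcomplexes of the boundary of a triangle (hence homologically simple); either route yields $H_1(L)=0$.
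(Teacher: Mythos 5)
Your decomposition is the same as the paper's: you let $V$ be the subcomplex of $L$ spanned by the facets not among the $T_i$, contract it using contractibility of simplicial trees, and examine how the leftover triangles attach. You are in fact more careful than the paper on two points it glosses over: that $V$ may be a disjoint union of trees rather than a single connected simplicial tree, and that a triangle $T_{i_k}$ may meet $V$ in a \emph{disconnected} subcomplex, so that the paper's dichotomy (``every edge of $F$ is in $V$'' versus ``some edge of $F$ is free'') does not exhaust the possibilities.

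However, you name this second point as ``the main obstacle'' and then do not overcome it. If a single component of $V$ contains two disjoint pieces of $\partial T_{i_k}$ (say two vertices of the triangle with no arc of $\partial T_{i_k}$ between them inside $V$), collapsing that component turns $T_{i_k}$ into a disk with two boundary points identified, which is homotopy equivalent to $S^1$ and would force $H_1(L)\neq 0$. Your Mayer--Vietoris alternative does not dispose of this either: with $H_1(V)=H_1(T)=0$ the sequence gives $H_1(L)\cong\ker\bigl(H_0(V\cap T)\to H_0(V)\oplus H_0(T)\bigr)$, and this kernel is nonzero in exactly the problematic configuration (two components of $T_{i_k}\cap V$ lying in the same component of $V$, or more generally a cycle in the incidence graph between components of $V$ and the triangles $T_{i_k}$). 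So the closing claim ``either route yields $H_1(L)=0$'' is precisely where the proof still has to happen: one must show that the simplicial-tree hypothesis on $X\setminus\{T_1,\ldots,T_m\}$ forbids such disconnected attachments. That may well follow from the leaf condition, but neither your argument nor the paper's supplies that step.
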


\begin{cor} Let $X$ be a pure simplicial complex of dimension 2 such that there exist disjoint facets $\{T_1, \ldots, T_m\} $ such that $X \backslash \{ T_1, \ldots, T_m \} $ is a simplicial tree. Then $[\delta_2]$ is totally unimodular.

\end{cor}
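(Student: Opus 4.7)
The plan is to obtain this corollary as an immediate consequence of the two preceding lemmas, chained together. The hypothesis of the corollary coincides exactly with the hypothesis of the lemma just before it, so I would first invoke that lemma to conclude that for every pure two-dimensional subcomplex $L \subset X$ we have $H_1(L) = 0$. This is the only topological input needed.

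Next I would feed that conclusion into Lemma \ref{inyec}, whose hypothesis is precisely the vanishing of $H_1(L)$ for every pure two-dimensional subcomplex $L \subset X$. Lemma \ref{inyec} then yields, via the long exact sequence of the pair $(L,L_0)$ and torsion-freeness of $H_0(L_0)$, that $H_1(L,L_0)$ is torsion free for all pure subcomplexes $L_0 \subset L$ of dimensions $1$ and $2$ respectively. By Theorem \ref{toed}, this is equivalent to $[\partial_2]$ being totally unimodular, which is the desired conclusion. A minor cosmetic point is that the statement writes $[\delta_2]$ where the body of the paper writes $[\partial_2]$; these denote the same boundary matrix, so this does not affect the argument.

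Since both ingredients are already proved just above, the proof proposal here really is the one-sentence composition ``apply the previous lemma, then apply Lemma \ref{inyec}.'' I do not expect any obstacle: no new computation, no new topology, and no new combinatorics are required beyond citing the two lemmas in sequence.
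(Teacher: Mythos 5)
Your proposal is correct and is exactly the intended argument: the paper states this as a corollary precisely because it is the composition of the preceding lemma (hypothesis implies $H_1(L)=0$ for all pure $2$-dimensional $L\subset X$) with Lemma \ref{inyec} (that vanishing implies total unimodularity of $[\partial_2]$). The $[\delta_2]$ versus $[\partial_2]$ discrepancy is indeed only a typo in the statement.
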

Conjecture \ref{conjet} remains without answer.

\newpage

\end{document}